\newtheorem{theorem}{Theorem}[section]
\newtheorem{proposition}[theorem]{Proposition}
\newtheorem{lemma}[theorem]{Lemma}
\newtheorem{remark}[theorem]{Remark}
\renewcommand{\theequation}{\arabic{section}.\arabic{equation}}
\def\bcr{\begin{color}{red}} 
\def\ec{\end{color}}
\title{Global solutions to the compressible Euler equations with heat transport by convection around Dyson's isothermal affine solutions}
\author{Calum Rickard\footnote{Department of Mathematics, University of Southern California, Los Angeles, USA}}
\date{}
\begin{document}

\maketitle

\abstract{Global solutions to the compressible Euler equations with heat transport by convection in the whole space are shown to exist through perturbations  of  Dyson's isothermal affine solutions \cite{10.2307/24902147}.  This  setting presents new difficulties because of the vacuum at infinity behavior of the density. In particular, the perturbation  of isothermal motion  introduces a Gaussian function into our stability analysis and a novel finite propagation result is proven to handle potentially unbounded terms arising from the presence of the Gaussian. Crucial stabilization-in-time effects of the background motion are mitigated through the use of this finite propagation result however and a careful use of the heat transport formulation in conjunction with new time weight manipulations are used to establish global existence. The heat transport by convection offers unique physical insights into the model and mathematically, we use a controlled spatial perturbation in the analysis of this feature of our system which leads us to exploit source term estimates as part of our techniques.}



 
\section{Introduction}\label{S:FORM}
We consider compressible Euler equations  with heat transport by convection  for ideal gases in three space dimensions
\begin{alignat}{2} 
\partial_t \rho + \text{div}(\rho\mathbf{u}) & = 0, \label{E:CONTINUITY} \\
\rho(\partial_t \mathbf{u} + \mathbf{u} \cdot \nabla \mathbf{u}) + \nabla p & = 0, \label{E:MOM} \\
\alpha (\partial_t T + \mathbf{u} \cdot \nabla T) +T \, \text{div}(\mathbf{u}) & = 0, \label{E:EE}
\end{alignat}
where $\mathbf{u}:\mathbb{R}_+ \times \mathbb{R}^3\rightarrow\mathbb{R}^3 $ is the fluid velocity vector field, $\rho :\mathbb{R}_+ \times \mathbb{R}^3 \rightarrow \mathbb{R}_+ $ is the density, $T:\mathbb{R}_+ \times \mathbb{R}^3\rightarrow \mathbb{R}_+ $ is the temperature, $p:\mathbb{R}_+ \times \mathbb{R}^3\rightarrow\mathbb{R}_+ $ is the pressure and $\alpha > 0$ is the heat capacity at constant volume  \cite{kondepudi2014modern}, a physical constant. Equations (\ref{E:CONTINUITY}) and (\ref{E:MOM}) express the usual conservation of mass and momentum respectively. Equation (\ref{E:EE}) expresses the conservation of energy in terms of the temperature $T$ in the form of heat transport by convection. This formulation follows from the ideal gas assumption which lets us express the internal energy $e$ as a linear function of temperature: $e=\alpha T$ \cite{kondepudi2014modern}. 

Our equation of state is the usual equation of state for an ideal gas given in terms of independent unknowns $\rho$ and $T$ \cite{kardar2007statistical,kondepudi2014modern}
\begin{equation}
p(\rho,T)=\rho T. \label{E:EEOS}
\end{equation}
Together, equations (\ref{E:CONTINUITY})-(\ref{E:EEOS}) describe the compressible flow of an inviscid, non-conducting and  calorically perfect \cite{anderson2006hypersonic},  ideal gas. The equations (\ref{E:CONTINUITY})-(\ref{E:EE}) under consideration can also be derived in a kinetic theory framework from the Boltzmann equation \cite{guo2010acoustic,saint2009hydrodynamic} where the temperature occurs naturally as an unknown and the specific heat capacity $\alpha=\frac{3}{2}$ appears modeling the monoatomic gas.

Collectively, we will study the Cauchy problem in the whole space for the Euler system with heat transport (\ref{E:CONTINUITY})-(\ref{E:EEOS}).

Notably we are interested in a regime that is almost isothermal. Following Dyson \cite{10.2307/24902147}, we consider a system isothermal if the temperature function is space independent. We will consider a wider class of potentially space dependent temperature profiles by perturbing around a background space independent temperature. Our model admits isothermal solutions in the sense of Dyson and allows the temperature to vary in time. Mathematically, we are deviating slightly from the notion of the isothermal Euler models which do not consider temperature but instead focus on the equation of state $p=\rho$, see \cite{dong2020blowup,jenssen2020multi} for example. However with the equation of state (\ref{E:EEOS}) $p=\rho T$, and $T$ close to being space independent, our framework shares some mathematical similarities with such previously considered models.

Beyond the special Dyson solution \cite{10.2307/24902147} discussed below in Section \ref{S:AFF}, to the best of our knowledge there are no known previous global existence results for  almost isothermal or isothermal Euler, with or without heat conduction.  The main goal of this article is to construct open sets of initial data that lead to global solutions to the physically important  almost isothermal  Euler system with heat transport.

\subsection{Existence Theories for the Euler System}

Before we introduce the special Dyson solution, we briefly review some known results for the Euler equations relevant to us.  Firstly the Euler equations are hyperbolic and the existence of $C^1$ local-in-time positive density solutions follows from the theory of symmetric hyperbolic systems \cite{kato1975cauchy,majda1984compressible}. However smooth solutions are expected to breakdown in finite time: the classical result of Sideris \cite{sideris1985} shows that singularities must form if the density is a strictly positive constant outside of a bounded set. Makino-Ukai- Kawashima \cite{MUK1986} proved that singularities form for compactly supported smooth solutions moving into vacuum. A detailed description of shock formation for irrotational fluids starting with smooth initial data is given by Christodoulou-Miao~\cite{MiCr}. For a general framework in this direction see the works of Speck and Luk-Speck~\cite{luk2018shock,Sp}. Very recently Buckmaster-Shkoller-Vicol \cite{buckmaster2020shock} give a constructive proof of shock formation from an open set of initial conditions leading to vorticity formation. Further it is only known in one space dimension that the isentropic ($p=\rho^\gamma$ for a constant $\gamma >1$) Euler system allows for a globally defined notion of a unique weak solution \cite{Chen1997,DiPerna1983,LiPeSo}.  

While compression contributes to the breakdown of solutions, expansion provides a mechanism that can produce global solutions. The so-called affine motions, found across different works by Ovsyannikov \cite{ovsyannikov1956new}, Dyson \cite{10.2307/24902147} and Sideris \cite{MR3634025}, are special expanding global solutions obtained using a separation-of-variables ansatz for the Lagrangian flow map,  see Section \ref{S:AFF}. In particular, Dyson \cite{10.2307/24902147} obtained special affine solutions to the isothermal system with heat transport, where a space independent initial temperature was considered. A lot of progress has been made in the vacuum free boundary problem in this direction: notably the Sideris affine solutions \cite{MR3634025} are special solutions in this setting. Then in the isentropic setting, global stability of background affine solutions has been proven by Had\v zi\'c-Jang \cite{1610.01666} for $\gamma \in (1,\frac53]$ and then extended to the full range $\gamma>1$ by Shkoller-Sideris~\cite{shkoller2017global}. For the nonisentropic (variable entropy) vacuum free boundary problem, Rickard-Had\v zi\'c-Jang \cite{rickard2019global} recently established global existence through perturbations around a rich class of background nonisentropic affine motions. Parmeshwar-Had\v zi\'c-Jang \cite{PHJ2019} showed the global existence of expanding solutions in the vacuum setting with small densities without relying on a perturbation. Small density global solutions in the whole space were obtained by Serre \cite{Se1997} and Grassin \cite{grassin1998global} for a special class of initial data by the perturbation of expansive wave solutions to the vectorial Burgers equation with linearly growing velocities at infinity - a related idea was used in the work of Rozanova~\cite{Ro}. We remark that our result does not require smallness of the density, considers a different pressure law to nonisentropic models and allows for isothermal solutions in the sense of Dyson \cite{10.2307/24902147}.

Finally, there is limited work on the mathematical isothermal Euler models which do not consider temperature but instead focus on the equation of state $p=\rho$. Akin to Sideris' work on nonisentropic Euler \cite{sideris1985}, Dong \cite{dong2020blowup} proved a blow up result  and a finite propagation result when $p=\rho$.  Jenssen-Tsikkou \cite{jenssen2020multi} provide a construction of blow-up solutions in the radial setting, and these are shown to be weak solutions to the original $p=\rho$ system.

\subsection{Isothermal Affine Motion}\label{S:AFF}

A special global-in-time solution to the isothermal Euler system with heat transport in the whole space was given by Dyson \cite{10.2307/24902147}. Crucially, Dyson makes following the space independence assumption on the initial affine temperature profile
\begin{equation}\label{E:INITIALAFFINETEMPERATUREPROFILE}
 T_A(0,x) \equiv \overline{T} > 0, 
\end{equation}
With this assumption in hand, start by writing
\begin{equation}\label{E:AFFINE}
x(y,t)=A(t)y.
\end{equation}
Then by computing $\frac{d}{dt} x = \mathbf{u}_A$,
\begin{equation}\label{E:VELOCITY}
\mathbf{u}_A(x,t)=A'(t)A(t)^{-1}x.
\end{equation}
Then $\text{div}(\mathbf{u}_A)=\text{tr}(A'(t)A(t)^{-1})$. Hence by Jacobi's formula $(\det A(t))'=\det A(t) \text{div}(\mathbf{u}_A)$, we obtain from (\ref{E:EE})
\begin{equation}\label{E:EETA1}
\alpha \partial_t T_A + T_A \frac{(\det A(t))'}{\det A(t)} = 0
\end{equation}
Then we have
\begin{equation}
\partial_t ( \ln \left[T_A^\alpha \det A(t) \right]) =0.
\end{equation}
Then using (\ref{E:INITIALAFFINETEMPERATUREPROFILE})
\begin{equation}\label{E:EETA2}
T_A=\overline{T} [\det A(0)]^{\frac{1}{\alpha}} (\det A(t))^{-\frac{1}{\alpha}}.
\end{equation}
Hence $T_A$ does not depend on space for all time. Then, after plugging (\ref{E:AFFINE}) into the momentum and continuity equations, one can effectively separate variables and discover the associated density solution of the Euler equations
\begin{equation}
\rho_A(t,x)=\frac{e^{-\frac{1}{2} |A(t)^{-1}x|^2}}{ \det A(t)},\label{E:AFFINEDENSITY}
\end{equation}
and fundamental system of ODEs for $A(t)$
\begin{align}
A''(t)&=\overline{T} [\det A(0)]^{\frac{1}{\alpha}} (\det A(t))^{-\frac{1}{\alpha}}  A(t)^{-\top}, \label{E:AODE} \\
(A(0),A'(0)) &\in \text{GL}^+(3) \times \mathbb M^{3\times3}. \label{E:FUNDAMENTALSYSTEMFIRST}
\end{align}
In the above $\mathbb{M}^{3\times3}$ denotes the set of $3 \times 3$ matrices over $\mathbb{R}$ and $\text{GL}^+(3)=\{A \in \mathbb{M}^{3\times3} : \det A > 0\}$.  Notably (\ref{E:AODE}) is the same ODE system discovered by Sideris \cite{MR3634025}.  We let $A \in C(\mathbb{R},\text{GL}^+(3)) \cap C^\infty(\mathbb{R},\mathbb{M}^3)$ be the global solution of this system of ODEs.

\begin{remark}
The density $\rho_A$ is a Gaussian function modulated by the matrix $A(t)$ \eqref{E:AFFINEDENSITY}. This is unique to the isothermal whole space problem and in particular we note $\rho_A$  dispereses  to $0$ as $|x| \rightarrow \infty$, that is, exhibits vacuum at infinity behavior. This is different to the form of $\rho_A$ considered in the vacuum free boundary perturbation problem \cite{1610.01666,rickard2019global,shkoller2017global} where $\rho_A$ achieves a vacuum state locally through the modulation of a distance function $1-|y|^2$. Specifically, in the isentropic problem \cite{1610.01666,shkoller2017global}, $\rho_A$ is fixed to a particular distance function whereas in the nonisentropic problem \cite{rickard2019global}, $\rho_A$ has infinite dimensional freedom associated with  its  distance function. The isothermal momentum $\rho_A \mathbf{u}_A$ decays  exponentially  in space because of the Gaussian function while the affine velocity is linearly growing in space.
\end{remark}



To conclude our characterization of isothermal affine motion, we provide precise asymptotics-in-time for $A(t)$.

\begin{lemma}\label{L:AASYMPTOTICS}
Consider the initial value problem~\eqref{E:AODE}--\eqref{E:FUNDAMENTALSYSTEMFIRST} and note $\overline{T} [\det A(0)]^{\frac{1}{\alpha}}>0$. For $\alpha \geq \frac{3}{2}$, the unique solution $A(t)$ to the fundamental system~\eqref{E:AODE}--\eqref{E:FUNDAMENTALSYSTEMFIRST} has the property
\begin{equation}\label{E:DETATCUBED1}
\det A(t) \sim 1 + t^3, \quad t \geq 0 
\end{equation}
Furthermore in this case, there exist matrices $A_0,A_1,M(t)$ such that
\begin{align}\label{E:AASYMP} 
A(t) = A_0 + t A_1 + M(t), \quad t \geq 0.
\end{align}
where $A_0,A_1$ are time-independent and $M(t)$ satisfies the bounds
\begin{align}\label{E:MASYMP} 
\|M(t)\| = o_{t \rightarrow \infty}(1+t), \ \ \|\partial_t M(t)\| \lesssim (1+t)^{-\frac{3}{\alpha}}.
\end{align}
For $0 < \alpha < \frac{3}{2}$, given matrices $A_0, A_1$ with $A_1$ positive definite, there exists a unique solution $A(t)$ to the fundamental system~\eqref{E:AODE}--\eqref{E:FUNDAMENTALSYSTEMFIRST} such that (\ref{E:DETATCUBED1}), (\ref{E:AASYMP}) and (\ref{E:MASYMP}) hold. 
\end{lemma}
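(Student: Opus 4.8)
The plan is to treat the two regimes separately, but in both cases to reduce the statement to two facts about the matrix ODE \eqref{E:AODE}: a linear a priori bound $\|A(t)\|\lesssim 1+t$, and the sharp cubic Jacobian bound $\det A(t)\sim 1+t^3$. Granted these, the expansion \eqref{E:AASYMP}--\eqref{E:MASYMP} follows by integrating \eqref{E:AODE} twice.

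The a priori bounds come from conservation laws of \eqref{E:AODE}. Write $\delta:=\overline T[\det A(0)]^{1/\alpha}>0$; pairing \eqref{E:AODE} with $A'$ in the Frobenius inner product and using Jacobi's formula $\frac{d}{dt}\det A=\det A\,\text{tr}(A^{-1}A')$ shows that $E:=\tfrac12\|A'(t)\|_F^2+\alpha\delta(\det A(t))^{-1/\alpha}$ is conserved, and by \eqref{E:FUNDAMENTALSYSTEMFIRST} its value $E=\tfrac12\|A'(0)\|_F^2+\alpha\overline T>0$ is positive for every choice of data. Hence $\|A'(t)\|_F\le\sqrt{2E}$, so $\|A(t)\|\lesssim 1+t$ and $\det A(t)\le\|A(t)\|^3\lesssim(1+t)^3$, while $\|A'\|_F^2\ge0$ forces the uniform bound $\det A(t)\ge c_0>0$. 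For the virial $v:=\tfrac12\|A\|_F^2$ a short computation gives $v''=\|A'\|_F^2+3\delta(\det A)^{-1/\alpha}$, which on eliminating $\delta(\det A)^{-1/\alpha}$ by the energy identity becomes $v''=(1-\tfrac{3}{2\alpha})\|A'\|_F^2+\tfrac{3E}{\alpha}$; thus for $\alpha\ge\tfrac32$ we get $\tfrac{3E}{\alpha}\le v''(t)\le 2E$, hence $\|A(t)\|\sim 1+t$.

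For $\alpha\ge\tfrac32$ only the lower bound $\det A(t)\gtrsim 1+t^3$ remains, and this is the substantive point: energy and virial give $1\lesssim\det A(t)\lesssim(1+t)^3$ but not the matching cubic lower bound, equivalently that no singular value of $A(t)$ decays relative to $t$. Now \eqref{E:AODE} is precisely Sideris' affine system with source exponent $-\tfrac1\alpha$, which under the identification $\gamma=1+\tfrac1\alpha$ lies in the range $3(\gamma-1)\le 2$, so I would invoke the spreading estimate of \cite{MR3634025} (see also the ODE analysis in \cite{shkoller2017global}), the required nondegeneracy being automatic here since $E>0$. This gives $\det A(t)\sim 1+t^3$, whence $\|A(t)^{-1}\|\le\|A(t)\|^2/\det A(t)\lesssim(1+t)^{-1}$ and, by \eqref{E:AODE}, $\|A''(t)\|\lesssim(\det A(t))^{-1/\alpha}\|A(t)^{-1}\|\lesssim(1+t)^{-3/\alpha-1}$, which is integrable on $[0,\infty)$. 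Therefore $A_1:=A'(0)+\int_0^\infty A''(s)\,ds$ and $A_0:=A(0)$ exist, and $M(t):=A(t)-A_0-tA_1=\int_0^t(A'(s)-A_1)\,ds$ satisfies $\partial_tM(t)=-\int_t^\infty A''(s)\,ds$, so $\|\partial_tM(t)\|\lesssim(1+t)^{-3/\alpha}$ and $\|M(t)\|\le\int_0^t\|\partial_tM(s)\|\,ds=o_{t\to\infty}(1+t)$ (bounded if $\alpha<3$, $\sim\log t$ if $\alpha=3$, $\sim t^{1-3/\alpha}$ if $\alpha>3$); this is \eqref{E:DETATCUBED1}--\eqref{E:MASYMP}.

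For $0<\alpha<\tfrac32$ the data at $t=\infty$ is no longer slaved to that at $t=0$, so I would build the solution backward. Given $A_0,A_1$ with $A_1$ positive definite, seek $A=A_0+tA_1+M$ as a fixed point of $\mathcal T[M](t):=\int_t^\infty(s-t)\,\delta\,(\det(A_0+sA_1+M(s)))^{-1/\alpha}(A_0+sA_1+M(s))^{-\top}\,ds$ on $[T,\infty)$ with $T$ large, inside the complete metric space $\{\|M(t)\|\le Kt^{1-3/\alpha},\ \|\partial_tM(t)\|\le Kt^{-3/\alpha}\}$. Positive-definiteness of $A_1$ makes $\det(A_0+sA_1+M(s))\sim s^3\det A_1$ on $[T,\infty)$, so the integrand is $O(s^{-3/\alpha-1})$; since $3/\alpha>2$ here the double integral converges and, on choosing $T$ large, $\mathcal T$ is a contraction --- this is exactly where $\alpha<\tfrac32$ enters, mirroring the forward dichotomy. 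The fixed point is then continued to $[0,\infty)$ by local ODE theory, the energy identity forcing $\det A(t)\ge c_0>0$ so the flow never leaves $\text{GL}^+(3)$; uniqueness in the class with the prescribed asymptotics follows from the contraction together with a bootstrap, and \eqref{E:DETATCUBED1}--\eqref{E:MASYMP} are immediate from $\det A(t)\sim t^3\det A_1$ for large $t$ and $\det A\ge c_0$.
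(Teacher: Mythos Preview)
Your proposal is correct and takes essentially the same approach as the paper: both identify \eqref{E:AODE} with Sideris' affine system under the correspondence $\gamma=1+\tfrac1\alpha$ (so $\alpha\ge\tfrac32\Leftrightarrow\gamma\le\tfrac53$) and invoke the results of \cite{MR3634025} and \cite{shkoller2017global}. The paper's proof is a pure citation (adding Lemma~A.1 of \cite{1610.01666} for the $\alpha\ge\tfrac32$ case), whereas you have additionally unpacked the energy and virial identities and sketched the backward fixed-point construction that underlie those references; your computations check out, including the clean bound $v''\le 2E$.
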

\begin{proof}
For all $\alpha>0$, we use Theorem 3 and Lemma 6 from \cite{MR3634025} to obtain the results. Now note
\begin{equation}
\begin{cases}
0 < \alpha < \frac{3}{2} & \Rightarrow \  \frac{1}{\alpha}+1 > \frac53 \\
\alpha \geq \frac{3}{2} & \Rightarrow \  1 < \frac{1}{\alpha}+1 \leq \frac53,
\end{cases}
\end{equation}
and $1-(\frac{1}{\alpha}+1)=-\frac{1}{\alpha}$. Then for $\alpha \geq \frac{3}{2}$, we additionally use Lemma A.1 from \cite{1610.01666}. For $0 < \alpha < \frac{3}{2}$, we additionally use Lemma 1 from \cite{shkoller2017global}.
\end{proof}
In this paper we restrict our attention to the class of  isothermal  affine solutions expanding linearly in each coordinate direction: namely we require
\begin{equation}\label{E:DETATCUBED2}
\det A(t) \sim 1 + t^3, \quad t \geq 0.
\end{equation}
By Lemma \ref{L:AASYMPTOTICS}, for $\alpha \geq \frac{3}{2}$ this is not a restriction at all in fact since $A(t)$ will immediately satisfy (\ref{E:DETATCUBED2}). For $0 < \alpha < \frac{3}{2}$, Lemma \ref{L:AASYMPTOTICS} shows there exists a rich class of $A(t)$ satisfying (\ref{E:DETATCUBED2}).

\begin{remark}
With expanding affine solutions in hand, we give a physical interpretation of our affine temperature profile $T_A=\overline{T} [\det A(0)]^{\frac{1}{\alpha}} (\det A(t))^{-\frac{1}{\alpha}}.$ Since $\det A(t) \sim 1 + t^3$, our space independent affine temperature $T_A \rightarrow 0$ as $t \rightarrow \infty$, that is, the isothermal gas becomes cooler for large time. This is a consequence of the heat conduction equation (\ref{E:EE}). 
\end{remark}

We denote the set of affine motions under consideration by $\mathscr{S}$. To recap,  the set $\mathscr{S}$ is parametrized by the quadruple
\begin{equation}    
(A(0),A'(0),\bar{T}) \in \text{GL}^+ (3) \times \mathbb{M}^{3 \times 3} \times \mathbb{R}_+.
\end{equation}
With our set of isothermal affine motions $\mathscr{S}$ in hand, the goal of this paper is to establish the global-in-time stability of the isothermal Euler system with heat transport (\ref{E:CONTINUITY})-(\ref{E:EEOS}) by perturbing around the expanding affine motions. 

\section{Formulation and Main Global Existence Result}\label{S:FORM}  

\subsection{Lagrangian Coordinates}\label{S:LAGR}    

In order to analyze the stability problem for affine motions, we will use the Lagrangian formulation  that elucidates the perturbation of the background affine motion compared to the Eulerian formulation. In particular, the Lagrangian formulation allows us to separate variables in a sense which crucially allows us to take full advantage of the time stabilizing mechanism provided by the expanding background motion and on the other hand, isolate the spatial Gaussian profile which will require careful treatment. Notably the heat transport equation is necessary in providing sufficient decay to close estimates and establish global stability. 

We first define the flow map $\zeta$ as follows     
\begin{align}
\partial_t \zeta (t,y) &= \mathbf{u}(t,\zeta(t,y)), \\
\zeta(0,y)&=\zeta_0(y),
\end{align}
where $\zeta_0$ is a sufficiently smooth diffeomorphism to be specified. We introduce the notation
\begin{align} 
\mathscr{A}_\zeta := [D \zeta]^{-1} \quad &\text{(Inverse of the Jacobian matrix),}\label{E:SCRAZETA} \\ 
\mathscr{J}_\zeta := \det[D \zeta] \quad &\text{(Jacobian determinant),}\label{E:SCRJZETA} \\
v:=\mathbf{u} \circ \zeta \quad &\text{(Lagrangian velocity)}, \\
f:=\rho \circ \zeta \quad &\text{(Lagrangian density)}, \\
\mathcal{T}:=T \circ \zeta \quad &\text{(Langrangian temperature)}, \\
a_\zeta:= \mathscr{J}_\zeta  \mathscr{A}_\zeta \quad &\text{(Cofactor matrix)}.
\end{align}
In this framework material derivatives reduce to pure time derivatives and in particular, the temperature equation (\ref{E:EE}) is reformulated as
\begin{equation}\label{E:EELAGRANGIAN1}
\alpha \partial_t \mathcal{T} + \mathcal{T} [\mathscr{A}_\zeta]_i^j v^i,_j = 0
\end{equation}
By the standard calculation $\partial_t \mathscr{J}_\zeta = \mathscr{J}_\zeta [\mathscr{A}_\zeta]_i^j v^i,_j$
\begin{equation}\label{E:EELAGRANGIANT}
\alpha \frac{\partial_t \mathcal{T}}{\mathcal{T}} + \frac{\partial_t \mathscr{J}_\zeta}{\mathscr{J}_\zeta} = 0.
\end{equation}
Then we have
\begin{equation}
\partial_t ( \ln \left[\mathcal{T}^\alpha \mathscr{J}_\zeta \right]) =0,
\end{equation}
which implies
\begin{equation}\label{E:LAGRANGIANT}
\mathcal{T}=T_{0}(\zeta_0(y)) \left[\frac{\mathscr{J}_\zeta (0,y)}{\mathscr{J}_\zeta}\right]^{\frac{1}{\alpha}}.
\end{equation}
Furthermore is well-known \cite{Coutand2012,doi:10.1002/cpa.21517} that the conservation of mass equation (\ref{E:CONTINUITY}) gives
\begin{equation}\label{E:LD}
f(t,y)=(\mathscr{J}_\zeta(t,y))^{-1} \rho_0(\zeta_0(y))\mathscr{J}_{\zeta}(0,y).
\end{equation}
Finally using the equation of state  for an ideal gas  $p=\rho T$ the momentum equation (\ref{E:MOM}) is reformulated as
\begin{equation}\label{E:LMOM}
f \partial_{tt} \zeta_i + [\mathscr{A}_\zeta]_i^k (f \mathcal{T})_{,k}=0.
\end{equation}
Here we use coordinates $i=1,2,3$ with the Einstein summation convention and the notation $F,_k$ to denote the $k^{th}$ partial derivative of $F$.

Next introduce the following notation
\begin{equation}
w(y):=\rho_0(\zeta_0(y))\mathscr{J}_\zeta(0,y). \label{E:WNOTATION}
\end{equation}
Then using $[\mathscr{A}_\zeta]_i^k =  \mathscr{J}_\zeta^{-1} [a_\zeta]_i^k$ and the formula for $\mathcal{T}$ (\ref{E:LAGRANGIANT}) we obtain
\begin{equation}\label{E:LIE}
w \partial_{tt} \zeta_i + [a_\zeta]_i^k \left(w \mathscr{J}_{\zeta}^{-1} T_{0}(\zeta_0(y)) \left[\frac{\mathscr{J}_\zeta (0,y)}{\mathscr{J}_\zeta}\right]^{\frac{1}{\alpha}} \right)_{,k}=0,
\end{equation}
Using the Piola identity $([a_\zeta]_i^k)_{,k} = 0$ we rewrite (\ref{E:LIE}) as
\begin{equation}\label{E:LAGRANGIANPREAFF}
w \partial_{tt} \zeta_i  + \left(w [\mathscr{A}_\zeta]_i^k T_{0}(\zeta_0(y)) \left[\frac{\mathscr{J}_\zeta (0,y)}{\mathscr{J}_\zeta}\right]^{\frac{1}{\alpha}} \right)_{,k}=0.
\end{equation}
Affine motions described in Section \ref{S:AFF} can be realized as special solutions of (\ref{E:LAGRANGIANPREAFF}) of the form $\zeta(t,y)=A(t)y$ if  we make the space independence assumption on our initial affine temperature
\begin{equation}
 T_A(0,x) \equiv \overline{T} > 0, 
\end{equation} 
Now in this case, $\zeta(t,y)=A(t)y$, $\mathscr{A}_\zeta^{\top}=A(t)^{-\top}$ and $\mathscr{J}_\zeta = \det A$. Hence the ansatz transforms (\ref{E:LAGRANGIANPREAFF}) into
\begin{equation}\label{E:POSTAFFANSATZ}
w A_{tt} y + \overline{T} [\det A(0)]^{\frac{1}{\alpha}} (\det A)^{-\frac{1}{\alpha}}  A^{-\top} \nabla( w  ) = 0,
\end{equation}
We have that $w$ is independent of $t$ and hence (\ref{E:POSTAFFANSATZ}) will hold if we require
\begin{align}
A_{tt}&=\overline{T} [\det A(0)]^{\frac{1}{\alpha}} (\det A)^{-\frac{1}{\alpha}}  A^{-\top} A^{-\top} , \label{E:AFFREQ1} \\
w  y &=  -\nabla (w).  \label{E:AFFREQ2}
\end{align}
At this stage, we demand
\begin{equation}\label{E:WDEMAND}
w(y)=w(|y|)=e^{-\frac{|y|^2}{2}}.
\end{equation}
We observe that (\ref{E:AFFREQ1})-(\ref{E:WDEMAND}) are nothing but the affine solutions described in Section \ref{S:AFF}, and produce the set of affine motions $\mathscr{S}$ under consideration. Fix an element of $\mathscr{S}$.
\begin{remark}
Through (\ref{E:WNOTATION}), the initial data $\rho_0$ for our problem is chosen such that (\ref{E:WDEMAND}) is satisfied.
\end{remark}
With an affine motion from $\mathscr{S}$ fixed, we define the modified flow map $\eta:=A^{-1}\zeta$. Then $\mathscr{A}_\zeta^{\top}=A^{-\top}\mathscr{A}_\eta^\top$, $\mathscr{J}_\zeta = (\det A) \mathscr{J}_\eta$ where $\mathscr{A}_\eta^\top$, $\mathscr{J}_\eta$ are the $\eta$ equivalents of (\ref{E:SCRAZETA}), (\ref{E:SCRJZETA}) respectively. Now from (\ref{E:LAGRANGIANPREAFF}) we have 
\begin{align}
&w (\partial_{tt}\eta_{i} + 2 [A^{-1}]_{i \ell} \partial_t A_{\ell j} \partial_t \eta_{j} + [A^{-1}]_{i \ell} \partial_{tt} A_{\ell j} \eta_j) \notag \\
&\quad +[\det A(0)]^{\frac{1}{\alpha}} (\det A)^{-\frac{1}{\alpha}} [A^{-1}]_{i\ell} [A^{-1}]_{j \ell} \left( w [\mathscr{A}_\eta]_j^k T_{0}(\zeta_0(y)) \left[\frac{\mathscr{J}_\eta (0,y)}{\mathscr{J}_\eta}\right]^{\frac{1}{\alpha}} \right)_{,k}=0.
\end{align}
Via (\ref{E:AFFREQ1}) we rewrite the above equation as      
\begin{align}\label{E:ETAPRETAU}
&w ( \partial_{tt} \eta_i + 2 [A^{-1}]_{i \ell} \partial_t A_{\ell j} \partial_t \eta_{j}) + w \overline{T} [\det A(0)]^{\frac{1}{\alpha}} (\det A)^{-\frac{2}{3}-\frac{1}{\alpha}}  \Lambda_{i \ell} \eta_{\ell} \notag \\
&\quad +[\det A(0)]^{\frac{1}{\alpha}} (\det A)^{-\frac{2}{3}-\frac{1}{\alpha}} \left( w \Lambda_{ij} [\mathscr{A}_\eta]_j^k T_{0}(\zeta_0(y)) \left[\frac{\mathscr{J}_\eta (0,y)}{\mathscr{J}_\eta}\right]^{\frac{1}{\alpha}} \right)_{,k}=0,
\end{align}
where the notation $\Lambda:=(\det A)^{\frac{2}{3}} A^{-1} A^{-\top}$ has been introduced.

Writing $A=\mu O$ where $\mu:=(\det A)^{\frac{1}{3}}$ and $O \in \text{SL}(3)$, we have $A^{-1}A_{t} = \mu^{-1}\mu_{t} I + O^{-1}O_{t}$. For ease of notation set $\overline{C}:=\overline{T}[\det A(0)]^{\frac{1}{\alpha}} >0$. Taking out of a factor of $\overline{T}$ from the last term of (\ref{E:ETAPRETAU}) the $\eta$ equation is
\begin{align}
& w ( \partial_{tt} \eta_i +  2 \frac{\mu_t}{\mu} \partial_t \eta_{i} + 2 \Gamma_{ij} \partial_t \eta_{j}) + \overline{C} w \mu^{-2-\frac{3}{\alpha}} \Lambda_{i \ell} \eta_{\ell} + \notag \\
& + \overline{C} \mu^{-2-\frac{3}{\alpha}} \left( w \Lambda_{ij} [\mathscr{A}_\eta]_j^k \left[\frac{T_{0}(\zeta_0(y))}{\overline{T}}\right] \left[\frac{\mathscr{J}_\eta (0,y)}{\mathscr{J}_\eta}\right]^{\frac{1}{\alpha}} \right)_{,k}=0, \label{E:POSTMUINTROEQN}
\end{align}
where we have defined $\Gamma:=O^{-1}O_t.$ Note $\eta(y) \equiv y$ corresponds to affine motion. Introducing the perturbation
\begin{equation}
\uptheta(\tau,y):=\eta(\tau,y)-y,
\end{equation}
and using (\ref{E:AFFREQ2}), equation (\ref{E:POSTMUINTROEQN}) can be written in terms of $\uptheta$
\begin{align}\label{E:FIRSTTHETA}
&w( \partial_{tt} \uptheta_i + 2 \frac{\mu_t}{\mu} \partial_t \uptheta_{i} + 2 \Gamma_{ij} \partial_t \uptheta_{j} ) + \overline{C} w \mu^{-2-\frac{3}{\alpha}} \Lambda_{i \ell} \uptheta_\ell \notag \\ 
& \quad + \overline{C} \mu^{-2-\frac{3}{\alpha}} \left(w  \Lambda_{ij} \left( [\mathscr{A}_\eta]_j^k \left[ \frac{T_{0}(\zeta_0(y))}{\overline{T}} \right] \left[ \frac{\mathscr{J}_\eta (0,y)}{\mathscr{J}_{\eta}} \right]^{\frac{1}{\alpha}} -\delta_j^k \right) \right)_{,k}=0.
\end{align}
Now we discuss how we choose the class of general initial data $T_0$ for our original problem. In particular we suppose
\begin{equation}\label{E:ORIGINALT0}
T_{0}(\zeta_0(y))=[\mathscr{J}_{\eta}(0,y)]^{-\frac{1}{\alpha}}\overline{T}(1+\beta(y)),
\end{equation}
where $\beta:\mathbb{R}^3 \rightarrow \mathbb{R}$ is a smooth compactly supported function in the unit ball in $H^{k}(\mathbb{R}^3)$ with the following smallness condition
\begin{equation}\label{E:BETADEMAND}
\| \beta \|^{ 2 }_{H^{k}(\mathbb{R}^3)} \leq \lambda,
\end{equation}
for $k \in \mathbb{Z}_{\geq 0}$ taken sufficiently large and $\lambda > 0$ taken sufficiently small, to be specified later by Theorems \ref{T:LWPGAMMALEQ5OVER3} and \ref{T:MAINTHEOREMGAMMALEQ5OVER3}.
\begin{remark}
Through (\ref{E:ORIGINALT0}), we are allowing our initial temperature to vary spatially in a controlled way from the space independent affine temperature. As we will see,  $1+\beta$ will appear in our high order energy and $\beta$ itself will contribute as a source term in our estimates. This motivates the smallness assumption (\ref{E:BETADEMAND}). 
\end{remark}
Now from (\ref{E:ORIGINALT0})
\begin{equation}
\frac{T_0 (\zeta_0(y))[\mathscr{J}_{\eta}(0,y)]^{\frac{1}{\alpha}}}{\overline{T}}=1+\beta(y).
\end{equation}
Then the last term of (\ref{E:FIRSTTHETA}) is
\begin{align}
&\overline{C} \mu^{-2-\frac{3}{\alpha}} (w  \Lambda_{ij} ( [\mathscr{A}_\eta]_j^k \mathscr{J}_{\eta}^{-\frac{1}{\alpha}} +\beta [\mathscr{A}_\eta]_j^k \mathscr{J}_{\eta}^{-\frac{1}{\alpha}} - \delta_j^k ) )_{,k} \notag \\
&=\overline{C} \mu^{-2-\frac{3}{\alpha}} ( w  \Lambda_{ij} ( 1+\beta ) ( [\mathscr{A}_\eta]_j^k \mathscr{J}_{\eta}^{-\frac{1}{\alpha}} - \delta_j^k ) )_{,k} + \overline{C} \mu^{-2-\frac{3}{\alpha}}  \Lambda_{ik} (w \beta)_{,k} \, ,
\end{align}
where the second term on the right hand side will act as a source term in our estimates. Thus from (\ref{E:FIRSTTHETA}) we have
\begin{align}
&\partial_{tt} \uptheta_i + 2 \frac{\mu_t}{\mu} \partial_t \uptheta_{i} + 2 \Gamma_{ij} \partial_t \uptheta_{j} + \overline{C}  \mu^{-2-\frac{3}{\alpha}} \Lambda_{i \ell} \uptheta_\ell \notag \\
&+ \frac{\overline{C} \mu^{-2-\frac{3}{\alpha}}}{w} ( w  \Lambda_{ij} ( 1+\beta ) ( [\mathscr{A}_\eta]_j^k \mathscr{J}_{\eta}^{-\frac{1}{\alpha}} - \delta_j^k ) )_{,k} + \frac{\overline{C} \mu^{-2-\frac{3}{\alpha}}}{w}  \Lambda_{ik} (w \beta)_{,k}=0. \label{E:THETATA}
\end{align}
Make the time of change variable
\begin{equation}
\frac{d \tau}{dt}=\frac{1}{\mu}.
\end{equation}
Then we can formulate (\ref{E:THETATA}) as 
\begin{align}
&\frac{1}{\mu^2} \partial_{\tau \tau} \uptheta_i + \frac{\mu_{\tau}}{\mu^3} \partial_\tau \uptheta_i  + \frac{2}{\mu^2} \Gamma^*_{ij} \partial_{\tau} \uptheta_j +\overline{C} \mu^{-2-\frac{3}{\alpha}} \Lambda_{i \ell} \uptheta_\ell\notag \\
& + \frac{\overline{C} \mu^{-2-\frac{3}{\alpha}}}{w} ( w  \Lambda_{ij} ( 1+\beta ) ( [\mathscr{A}_\eta]_j^k \mathscr{J}_\eta^{-\frac{1}{\alpha}} - \delta_j^k ) )_{,k} + \frac{\overline{C} \mu^{-2-\frac{3}{\alpha}}}{w}  \Lambda_{ik} (w \beta)_{,k}=0. \label{E:PRETHETAEQN}
\end{align}
where we define $\Gamma^*=O^{-1}O_\tau$ which implies $\Gamma = \frac{1}{\mu} \Gamma^*$.

To take advantage of additional time decay arising from the  heat transport, let

\begin{equation}
0<\sigma<\min\left(\frac{3}{\alpha},2\right), \quad \delta:=\frac{3}{\alpha}-\sigma > 0.
\end{equation}

Multiply  (\ref{E:PRETHETAEQN})  by $\mu^{2+\sigma}$ to finally obtain
\begin{align}
&\mu^{\sigma} \partial_{\tau \tau} \uptheta_i + \mu_{\tau} \mu^{-1+\sigma} \partial_\tau \uptheta_i + 2 \mu^{\sigma} \Gamma^*_{ij} \partial_{\tau} \uptheta_j + \overline{C} \mu^{-\delta} \Lambda_{i \ell} \uptheta_\ell \notag \\
&+\frac{\overline{C} \mu^{-\delta}}{w} ( w  \Lambda_{ij} ( 1+\beta ) ( [\mathscr{A}_\eta]_j^k \mathscr{J}^{-\frac{1}{\alpha}} - \delta_j^k ) )_{,k} + \frac{\overline{C} \mu^{-\delta}}{w}  \Lambda_{ik} (w \beta)_{,k}=0. \label{E:THETAEQNLINEARENERGYFUNCTION}
\end{align}
We consider (\ref{E:THETAEQNLINEARENERGYFUNCTION}) with the initial conditions 
\begin{equation}\label{E:THETAICGAMMALEQ5OVER3}
\uptheta(0,y)=\uptheta_0(y), \quad \uptheta_\tau(0,y)=\mathbf{V}(0,y)=\mathbf{V}_{0}(y).
\end{equation}
Above we have introduced the notation $\mathbf{V}:=\partial_\tau \uptheta$ which will be used interchangeably.

\begin{remark}[Eulerian initial density $\rho_0$ and temperature $T_0$]\label{R:IDIT}
The Eulerian initial density $\rho_0$ and temperature $T_0$ are connected to the background affine motion via
\begin{align*}
\rho_0(x)&=w((\eta_0 \circ \zeta_A(0))^{-1}(x))\det[D(\zeta_0^{-1}(x))]^{-1}, \\
T_0(x)&=\det[D(\eta_0^{-1}(x))]^{-\frac{1}{\alpha}}\overline{T}[1+\beta((\eta_0 \circ \zeta_A(0))^{-1}(x))]
\end{align*}
where the composed maps are defined by $\eta_0(y):=A^{-1}(0)\zeta_0(y)$ and $\zeta_A(0)(y):=A(0)y$.
\end{remark}

\subsection{Notation}\label{S:NOTATION}
For ease of notation first set
\begin{equation}
\mathscr{A}:=\mathscr{A}_\eta; \quad \mathscr{J}:=\mathscr{J}_\eta.
\end{equation}
Using $\mathscr{A} [D\eta] = \text{{\bf Id}}$, we have the differentiation formulae for $\mathscr{A}$ and $\mathscr{J}$
\begin{equation}\label{E:AJDIFFERENTIATIONFORMULAE}
\partial \mathscr{A}^k_i = - \mathscr{A}^k_\ell \partial \eta^\ell,_s \mathscr{A}^s_i \  ; \quad \partial \mathscr{J} = \mathscr{J} \mathscr{A}^s_\ell\partial\eta^\ell,_s
\end{equation}
for $\partial=\partial_\tau$ or $\partial=\partial_i$, $i=1,2,3$.

Let $\mathbf{F}:\Omega \rightarrow \mathbb{R}^3$ and $f:\Omega \rightarrow \mathbb{R}$ be an arbitrary vector field and function respectively. First define the gradient and divergence along the flow map $\eta$ respectively
\begin{equation}
[\nabla_\eta \mathbf{F}]^i_r:=\mathscr{A}^s_r \mathbf{F}^i_{,s}; \quad \text{div}_\eta \mathbf{F}:=\mathscr{A}^s_\ell \mathbf{F}^\ell_{,s}.
\end{equation}
For curl estimates, introduce the anti-symmetric curl and cross product matrices respectively
\begin{equation}\label{E:CURLCROSSPRODMATRICES}
[\text{Curl}_{\Lambda\mathscr{A}}\mathbf{F}]^i_j :=\Lambda_{jm}\mathscr{A}^s_m\mathbf{F}^i,_s- \Lambda_{im}\mathscr{A}^s_m\mathbf{F}^j,_s; \quad [\Lambda\mathscr{A}\nabla f \times \mathbf{F}]^i_j :=\Lambda_{jm}\mathscr{A}^s_m f_{,s} \mathbf{F}^i- \Lambda_{im}\mathscr{A}^s_m f_{,s} \mathbf{F}^j.
\end{equation}

We work with $L^2$ based norms. Define
\begin{equation} 
\|\cdot\|:=\|\cdot\|_{L^2(\mathbb{R}^3)}.
\end{equation}
Cartesian derivative operators will be used. For $\nu \in \mathbb{Z}^3_{\geq 0}$ let\begin{equation}
\partial^\nu:=\partial_{y_1}^{\nu_1}\partial_{y_2}^{\nu_2}\partial_{y_3}^{\nu_3}.
\end{equation}

For our gradient energy contribution which arises directly from our problem, we need to diagonalize then positive symmetric matrix $\Lambda=(\det A)^{\frac{2}{3}} A^{-1} A^{-\top} \in \text{SL}(3)$ as follows   
\begin{equation}\label{E:LAMBDADECOMP}
\Lambda=P^{\top}QP, \quad P \in \text{SO(3)}, \quad Q=\text{diag}(d_1,d_2,d_3), \quad d_i > 0 \text{ eigenvalues of } \Lambda.
\end{equation}
Then the following quantity will appear in the energy which arises directly from the problem
\begin{equation}\label{E:NNU}
\mathscr N_\nu : =  P\:\nabla_\eta \partial^\nu\uptheta\:P^{\top}.
\end{equation}

Finally define the important $\mu$ related quantities
\begin{equation}\label{E:MU1MU0DEFINITIONS}
\mu_1:=\lim_{\tau \rightarrow \infty} \frac{\mu_\tau(\tau)}{\mu(\tau)}, \quad \mu_0:=\frac{\sigma}{2}\mu_1,
\end{equation}
 where we recall $0<\sigma<\min\left(\frac{3}{\alpha},2\right)$. 

\subsection{High-order Quantities}\label{S:HOQGAMMALEQ5OVER3}
Let $N \in \mathbb{N}$. To measure the size of the deviation $\uptheta$, we define the high-order weighted Sobolev norm as follows 
\begin{align}
\mathcal{S}^N(\tau) &:= \sup_{0\leq \tau' \leq \tau} \Big\{  \sum_{|\nu| \leq N} \Big( \mu^{\sigma} \| \partial^\nu \mathbf{V}\|^2 + \| \partial^\nu \uptheta \|^2 \Big) +  \sum_{|\nu| \leq N-1} \Big(\|\nabla_\eta \partial^\nu\uptheta\|^2+\|\text{div}_\eta \partial^\nu \uptheta \|^2 \Big) \notag \\
& \qquad \qquad  +\sum_{|\nu|=N} \Big( \mu^{-\delta} \|\nabla_\eta \partial^\nu\uptheta\|^2+ \mu^{-\delta} \|\text{div}_\eta \partial^\nu \uptheta \|^2 \Big) \Big\}.  \label{E:SNNORMGAMMALEQ5OVER3} 
\end{align}
Modified curl terms arise during energy estimates which are not a priori controlled by the norm $\mathcal{S}^N(\tau)$. These are measured via the following high-order quantity 
\begin{equation}\label{E:BNNORMGAMMALEQ5OVER3}
\mathcal{B}^N[\mathbf{V}](\tau)=\sup_{0 \leq \tau' \leq \tau} \Big\{ \sum_{|\nu| \leq N-1} \| \text{Curl}_{\Lambda \mathscr{A}} \partial^\nu \mathbf{V}\|^2+ \sum_{|\nu| = N } \mu^{-\delta} \| \text{Curl}_{\Lambda \mathscr{A}} \partial^\nu \mathbf{V}\|^2 \Big\},
\end{equation}
with $\mathcal{B}^N[\uptheta]$ defined in the same way: $\uptheta$ replaces $\mathbf{V}$ in (\ref{E:BNNORMGAMMALEQ5OVER3}).

\subsection{Main Theorem}\label{S:MAINTHEOREM}
\textbf{Local Well-Posedness.} Before giving our main theorem, we give the local well-posedness of our system.
\begin{theorem}\label{T:LWPGAMMALEQ5OVER3}
Fix $N\geq 4$.  Then there  are $\varepsilon_0>0$, $\lambda_0 > 0$ and $T^* > 0$  such that for every $\varepsilon \in (0,\varepsilon_0]$, $\lambda \in (0,\lambda_0]$ , pair of compactly supported initial data for (\ref{E:THETAEQNLINEARENERGYFUNCTION}) $(\uptheta_0,{\bf V}_0)$ satisfying 
\begin{equation}\label{E:LANGRANGIANDATASATISFY}
\mathcal{S}^N(\uptheta_0, {\bf V}_0) + \mathcal{B}^N({\bf V}_0) \leq \varepsilon, \quad \text{supp} \,\uptheta_0 \subseteq B_1(\mathbf{0}), \text{supp}\,\mathbf{V}_0 \subseteq B_1(\mathbf{0}),
\end{equation}
 spatial temperature perturbation $\beta$ appearing in (\ref{E:THETAEQNLINEARENERGYFUNCTION}) satisfying
\begin{equation}
\| \beta \|^2_{H^{N+1}(\mathbb{R}^3)} \leq \lambda, \quad \text{supp} \, \beta \subseteq B_1(\mathbf{0}) ,
\end{equation}
 there exists a unique solution $(\uptheta(\tau), {\bf V}(\tau)):\Omega \rightarrow \mathbb R^3\times \mathbb R^3$ to (\ref{E:THETAEQNLINEARENERGYFUNCTION})-(\ref{E:THETAICGAMMALEQ5OVER3}) for all $\tau\in [0,T^*]$. The solution has the property $\mathcal{S}^N (\uptheta, {\bf V})(\tau) + \mathcal{B}^N[{\bf V}](\tau)  \lesssim  \varepsilon$ for each $\tau\in[0,T^*]$. Furthermore, the map $[0,T^*]\ni\tau\mapsto\mathcal{S}^N(\tau)\in\mathbb R_+$ is continuous.
\end{theorem}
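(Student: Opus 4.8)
The plan is to run a standard iteration/fixed-point scheme adapted to the quasilinear structure of \eqref{E:THETAEQNLINEARENERGYFUNCTION}, taking care of the spatial weight $w(y)=e^{-|y|^2/2}$ and the time-dependent but (on $[0,T^*]$) bounded coefficients $\mu^\sigma$, $\mu_\tau\mu^{-1+\sigma}$, $\mu^{-\delta}$, $\Gamma^*$, $\Lambda$. First I would rewrite \eqref{E:THETAEQNLINEARENERGYFUNCTION} schematically as a symmetric-hyperbolic-type second-order system $\mu^\sigma\partial_{\tau\tau}\uptheta + (\text{lower order in }\partial_\tau) + \overline C\mu^{-\delta}\Lambda\uptheta + \overline C\mu^{-\delta} w^{-1}\big(w\,\mathcal{M}(\nabla\eta,\beta)\big)_{,k} + \overline C\mu^{-\delta}w^{-1}\Lambda(w\beta)_{,k}=0$, where $\mathcal M$ collects the quasilinear $[\mathscr A]_j^k\mathscr J^{-1/\alpha}-\delta_j^k$ nonlinearity which, by the differentiation formulae \eqref{E:AJDIFFERENTIATIONFORMULAE}, is smooth in $D\eta = \mathrm{Id}+D\uptheta$ as long as $\mathscr J$ stays bounded away from zero. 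Since $\mathcal S^N(\uptheta_0,\mathbf V_0)\le\varepsilon$ with $\varepsilon\le\varepsilon_0$ small, and by Sobolev embedding $H^N\hookrightarrow W^{1,\infty}$ for $N\ge 3$, we have $\|D\uptheta_0\|_{L^\infty}\ll 1$, hence $\mathscr J(0,\cdot)$ and $\mathscr A(0,\cdot)$ are uniformly close to the identity; this is the open condition that makes the quasilinear coefficients uniformly elliptic/positive for a short time.

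Next I would set up the linearized problem: given $\bar\uptheta$ in a closed ball $\mathcal X_{M,T^*}=\{\uptheta : \sup_{[0,T^*]}\mathcal S^N(\uptheta,\partial_\tau\uptheta)+\mathcal B^N[\partial_\tau\uptheta]\le M\varepsilon\}$, freeze $\mathscr A,\mathscr J,\Lambda$ at $\bar\uptheta$ and solve the resulting linear second-order hyperbolic equation for $\uptheta$ with data \eqref{E:THETAICGAMMALEQ5OVER3}. For the linear problem one gets existence and the basic energy estimate by applying $\partial^\nu$, $|\nu|\le N$, multiplying by $\partial_\tau\partial^\nu\uptheta$, integrating against $w\,dy$ (so the weight is built into the energy, matching the structure of $\mathcal S^N$ — note the $w^{-1}(w\,\cdot)_{,k}$ form is exactly what makes the weighted integration by parts clean), and using that $\mu$ and its derivatives are bounded on $[0,T^*]$ to absorb the commutator and lower-order terms; the $\beta$-terms enter as a source controlled by $\lambda$. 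The curl quantity $\mathcal B^N$ is propagated by taking $\mathrm{Curl}_{\Lambda\mathscr A}$ of the equation — the highest-order term $\Lambda\mathscr A\nabla(\cdots)$ is a gradient in the appropriate twisted sense, so its curl is lower order, giving a closed estimate for $\mathcal B^N[\mathbf V]$ and $\mathcal B^N[\uptheta]$ (this is the standard device from \cite{1610.01666,shkoller2017global,rickard2019global} for recovering full derivative control of $\nabla_\eta\uptheta$ from $\mathrm{div}_\eta$ plus curl). This produces a map $\bar\uptheta\mapsto\uptheta$; choosing $T^*$ small (depending on $M$, $\varepsilon_0$, $\lambda_0$ and the fixed element of $\mathscr S$) it maps $\mathcal X_{M,T^*}$ into itself, and a similar estimate on differences of two iterates in a lower norm (say $\mathcal S^{N-1}$) gives a contraction; the limit is the unique solution, and uniqueness in the full space follows from the same difference estimate. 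Continuity of $\tau\mapsto\mathcal S^N(\tau)$ is obtained in the usual way from weak continuity in the top norm plus continuity of the energy (a standard argument once the uniform bound is in place), using that the energy equivalent to $\mathcal S^N$ differs from $\mathcal S^N$ by lower-order terms and satisfies an integral identity with right-hand side continuous in $\tau$.

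The main obstacle I expect is the spatial weight $w=e^{-|y|^2/2}$ combined with the unbounded linear coefficient $\Lambda_{i\ell}\uptheta_\ell$ and the factor $|y|$ hidden in $(w\beta)_{,k}=w(\beta_{,k}-y_k\beta)$ and in $w_{,k}=-y_k w$: when one commutes $\partial^\nu$ through the $w^{-1}(w\,\cdot)_{,k}$ terms, derivatives can fall on $w$ and generate polynomially growing factors of $y$. Here I would use that $\uptheta_0,\mathbf V_0,\beta$ are compactly supported in $B_1(\mathbf 0)$ and that for the short time $T^*$ the solution has finite speed of propagation, so the support of $\uptheta(\tau,\cdot)$ stays in a fixed ball $B_R(\mathbf 0)$; on such a ball $|y|$ is bounded and $w$ is bounded above and below, so all the apparently unbounded $y$-weights are in fact harmless on $[0,T^*]$. (The delicate global-in-time version of this localization — the "novel finite propagation result" advertised in the abstract — is only needed for the global theorem; for local well-posedness a soft finite-speed argument on a fixed time interval suffices.) The only other mild technical point is keeping $\mathscr J$ bounded below along the iteration, which again follows from smallness of $\varepsilon$ and shortness of $T^*$ via $\mathscr J=\mathscr J(0)+\int_0^\tau \mathscr J\,\mathscr A^s_\ell\partial_\tau\eta^\ell{}_{,s}$.
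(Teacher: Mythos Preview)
Your approach is viable but takes a genuinely different route from the paper. The paper does \emph{not} run an iteration on \eqref{E:THETAEQNLINEARENERGYFUNCTION} directly in Lagrangian coordinates. Instead it goes back to the Eulerian formulation: from the Lagrangian data $(\uptheta_0,\mathbf V_0)$ it manufactures Eulerian data $(\rho_0,\mathbf u_0,T_0)$ (cf.\ Remark~\ref{R:IDIT}), invokes Kato's theory of symmetric hyperbolic systems \cite{kato1975cauchy} to get a local Eulerian solution, and then solves the flow-map ODE $\partial_t\zeta=\mathbf u(t,\zeta)$ by Picard to recover $\uptheta$ and $\mathbf V$; the $\mathcal S^N+\mathcal B^N$ bound is then read off from the Eulerian regularity together with div--curl and the curl equations. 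This is quicker because the heavy lifting (existence, uniqueness, continuation) is outsourced to an off-the-shelf theorem, and because in Eulerian variables the unbounded $y$-weights from the Gaussian never appear. Your direct Lagrangian scheme would also work, but buys you nothing extra here and costs you the machinery you sketch.

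Two corrections to your write-up. First, you say you would integrate against $w\,dy$ ``so the weight is built into the energy, matching the structure of $\mathcal S^N$''; this is a misreading --- $\mathcal S^N$ is defined with the \emph{unweighted} norm $\|\cdot\|_{L^2(\mathbb R^3)}$, precisely because (as the paper notes) $w$-weighted spaces ruin the Sobolev embeddings needed for the nonlinear commutator terms. On a fixed ball the two are equivalent, so your scheme still closes, but the statement as written is wrong. Second, your handling of the $|y|$ growth via ``a soft finite-speed argument on a fixed time interval'' is more delicate than you let on: in the paper the finite propagation Theorem~\ref{T:FINITEPROPAGATIONTHEOREM} is proved \emph{after} local existence, for the actual solution. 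To use it inside an iteration you would have to show that each iterate (the solution of the frozen-coefficient linear problem) stays supported in a fixed ball --- this requires a finite-speed result for the \emph{linearized} equation with coefficients depending on $\bar\uptheta$, which is true but needs its own local energy/domain-of-dependence argument, not merely an appeal to ``finite speed of propagation''.
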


\begin{proof}[Sketch of proof] The construction of a local Eulerian solution from generic initial data is given in Appendix \ref{A:LWP}. With an initial flow map specified by our Lagrangian initial data satisfying (\ref{E:LANGRANGIANDATASATISFY}), we define the Eulerian initial data $\rho_0$ and $T_0$, see Remark \ref{R:IDIT}, and choose $\mathbf{u}_0$ to have the same regularity. From the theory of symmetric hyperbolic systems \cite{kato1975cauchy}, the associated local Eulerian solution then preserves the regularity of its initial data. Then using the Picard Iteration Theorem for ODEs to solve for the flow map, we obtain a perturbation $\uptheta$, and associated ${\bf V}$, which solves (\ref{E:THETAEQNLINEARENERGYFUNCTION})-(\ref{E:THETAICGAMMALEQ5OVER3}). The $\mathcal{S}^N (\uptheta, {\bf V})(\tau) + \mathcal{B}^N[{\bf V}](\tau)$ bound is established through regularity obtained from the Eulerian solution and ODE theory in conjunction with elliptic regularity, desirable curl equations, and finally a standard div-curl estimate.
%
%
%
%
%
\end{proof}

\textbf{A priori assumptions.} Finally before our main theorem, make the following a priori assumptions on our local solutions from Theorem \ref{T:LWPGAMMALEQ5OVER3}
\begin{align}\label{E:APRIORI}
&\| \mathscr{A}-\textbf{Id} \|_{L^{\infty}(\mathbb{R}^3)} < \frac{1}{3}, \quad \| D\uptheta \|_{L^{\infty}(\mathbb{R}^3)}  < \frac{1}{3}, \quad \| \mathscr{J}-\textbf{Id} \|_{L^{\infty}(\mathbb{R}^3)}  < \frac{1}{3}, \notag \\
&\mathcal{S}^N(\tau) < 1/3, \quad \| D \mathbf{V} \|_{L^\infty (\mathbb{R}^3)} \leq C, \quad \| D \mathbf{V}_\tau \|_{L^\infty (\mathbb{R}^3)} \leq C,
\end{align}
for all $\tau \in [0,T^*]$. 

We are now ready to give our main theorem.   

\begin{theorem}\label{T:MAINTHEOREMGAMMALEQ5OVER3}
Fix $N\geq 4$. Consider a fixed triple
\begin{equation}    
(A(0),A'(0),\overline{T}) \in \text{GL}^+ (3) \times \mathbb{M}^{3 \times 3} \times \mathbb{R}_+,
\end{equation}
parametrizing an  isothermal  affine motion from the set $\mathscr{S}$ so that $\det A(t) \sim 1 + t^3, t \geq 0$. Then there  are  $\varepsilon_0>0$  and $\lambda_0 > 0$  such that for every $\varepsilon \in (0,\varepsilon_0]$, $\lambda \in (0,\lambda_0]$ , pair of compactly supported initial data for (\ref{E:THETAEQNLINEARENERGYFUNCTION}) $(\uptheta_0,{\bf V}_0)$ satisfying 
\begin{equation}
\mathcal{S}^N(\uptheta_0, {\bf V}_0) + \mathcal{B}^N({\bf V}_0) \leq \varepsilon, \quad \text{supp} \,\uptheta_0 \subseteq B_1(\mathbf{0}), \text{supp}\,\mathbf{V}_0 \subseteq B_1(\mathbf{0}),
\end{equation}
 spatial temperature perturbation $\beta$ appearing in (\ref{E:THETAEQNLINEARENERGYFUNCTION}) satisfying
\begin{equation}
\| \beta \|^2_{H^{N+1}(\mathbb{R}^3)} \leq \lambda, \quad \text{supp} \, \beta \subseteq B_1(\mathbf{0}) ,
\end{equation}
 there exists a global-in-time solution, $(\uptheta,{\bf V})$, to the initial value problem (\ref{E:THETAEQNLINEARENERGYFUNCTION})-(\ref{E:THETAICGAMMALEQ5OVER3})  and a constant $C>0$ such that
\begin{equation}
\mathcal S^N(\uptheta,{\bf V})(\tau)  \le C (\varepsilon+\lambda), \quad \mathcal{B}^N[{\bf V}](\tau) \leq C (\varepsilon +\lambda)(1+\tau^2) e^{-2\mu_0 \tau}, \ \ 0\le\tau<\infty, \label{E:GLOBALBOUND} 
\end{equation}
where we recall $\mu_1=\lim_{\tau \rightarrow \infty} \frac{\mu_\tau(\tau)}{\mu(\tau)}, \mu_0=\frac{\sigma}{2}\mu_1$ and $0<\sigma<\min\left(\frac{3}{\alpha},2\right)$.

\end{theorem}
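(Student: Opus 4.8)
The strategy is a continuity/bootstrap argument built on a carefully weighted high-order energy. I would start from Theorem~\ref{T:LWPGAMMALEQ5OVER3}, which gives a local solution with $\mathcal{S}^N + \mathcal{B}^N \lesssim \varepsilon$ on $[0,T^*]$, and upgrade the a priori assumptions \eqref{E:APRIORI} to genuine estimates on the maximal interval of existence. The core is an energy identity obtained by applying $\partial^\nu$ to \eqref{E:THETAEQNLINEARENERGYFUNCTION} for each $|\nu|\leq N$ and pairing with $\partial^\nu \mathbf{V} = \partial^\nu\partial_\tau\uptheta$, integrated with the weight $w = e^{-|y|^2/2}$. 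This produces, after commuting derivatives through $\mathscr{A}$, $\mathscr{J}$ and $\Lambda$ using \eqref{E:AJDIFFERENTIATIONFORMULAE} and the Piola identity, a differential inequality for a modified energy $E^N(\tau)$ comparable to $\mathcal{S}^N$. The key mechanism to exploit is the damping coming from the $\mu_\tau \mu^{-1+\sigma}\partial_\tau\uptheta$ term together with the fact that $\mu\sim(1+\tau)$ (which follows from $\det A(t)\sim 1+t^3$ and the change of variables $d\tau/dt=1/\mu$, giving $\mu(\tau)\sim 1+\tau$), so that $\mu_\tau/\mu \to \mu_1 > 0$; the time-weighted combinations of the energy functional and the extra $\mu^{-\delta}$ weights on the top-order gradient terms are chosen precisely so the bad terms involving $\mu^{-\delta}\to 0$ are summable in $\tau$, exactly as in \cite{rickard2019global,shkoller2017global,1610.01666}. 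The source term $\overline{C}\mu^{-\delta}w^{-1}\Lambda_{ik}(w\beta)_{,k}$ contributes $\lesssim \lambda\,\mu^{-\delta}$ after Cauchy--Schwarz, which is integrable since $\delta>0$; this is where the $\lambda$ on the right side of \eqref{E:GLOBALBOUND} enters. The main new subtlety relative to the free-boundary works is that $w^{-1}$ is a Gaussian blow-up at infinity, so terms like $w^{-1}(w\,\Lambda_{ij}(1+\beta)(\ldots))_{,k}$ are not obviously $L^2$; here I would invoke the novel finite propagation result advertised in the abstract — since $\uptheta_0$, $\mathbf{V}_0$ and $\beta$ are all supported in $B_1(\mathbf{0})$, the support of $\uptheta(\tau,\cdot)$ stays bounded (with a controlled growth rate in $\tau$), so on the support one has $w^{-1}\lesssim e^{R(\tau)^2/2}$ with $R(\tau)$ at most polynomial; the stabilizing $\mu^{-\delta}$ and time weights must then be shown to beat this, which is the crux of the argument.

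After the energy estimate I would close the curl estimate separately. Taking $\text{Curl}_{\Lambda\mathscr{A}}$ of the $\uptheta$-equation, the leading linear terms combine into a nice transport/ODE structure for $\text{Curl}_{\Lambda\mathscr{A}}\partial^\nu\mathbf{V}$ and $\text{Curl}_{\Lambda\mathscr{A}}\partial^\nu\uptheta$ in which the zero-order curl term has a definite sign, yielding decay of the form $(1+\tau^2)e^{-2\mu_0\tau}$ — the $e^{-2\mu_0\tau}$ coming from integrating the coefficient $\mu^{-\delta}$ against $\mu_\tau/\mu$ (note $\int^\tau \mu^{-\delta}(\mu_s/\mu)\,ds$ and $\mu\sim 1+s$ give the exponential rate $\mu_0=\tfrac{\sigma}{2}\mu_1$ in the $\tau$ variable), and the polynomial $(1+\tau^2)$ prefactor absorbing lower-order interaction and source contributions. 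Crucially, the full-space div-curl estimate on $\mathbb{R}^3$ (rather than a bounded domain) lets $\|\nabla_\eta\partial^\nu\uptheta\|$ be controlled by $\|\text{div}_\eta\partial^\nu\uptheta\|$, $\|\text{Curl}_{\Lambda\mathscr{A}}\partial^\nu\uptheta\|$ and lower-order terms, so that $\mathcal{S}^N$ and $\mathcal{B}^N$ feed back into each other without loss of derivatives — here the diagonalization $\Lambda=P^\top Q P$ and the quantity $\mathscr{N}_\nu$ of \eqref{E:NNU} are used to extract the gradient energy with the correct $\Lambda$-weights.

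Finally, combining the energy and curl inequalities gives a closed system of the schematic form $\tfrac{d}{d\tau}E^N \lesssim (\text{integrable in }\tau)\cdot E^N + \lambda\mu^{-\delta} + (\text{cubic}) $, and a Grönwall/bootstrap argument on $[0,T^*]$ shows $\mathcal{S}^N(\tau)\leq C(\varepsilon+\lambda)$ and $\mathcal{B}^N[\mathbf{V}](\tau)\leq C(\varepsilon+\lambda)(1+\tau^2)e^{-2\mu_0\tau}$ with constants independent of $T^*$, which — choosing $\varepsilon_0,\lambda_0$ small enough that these strictly improve \eqref{E:APRIORI} — precludes finite-time blow-up and yields global existence with the stated bounds. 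The step I expect to be the main obstacle is reconciling the Gaussian weight $w$ with the unbounded factor $w^{-1}$ in the nonlinear and source terms: making the finite propagation bound quantitative enough that the support radius $R(\tau)$ grows slowly relative to the $\mu^{-\delta}$ and time-weight gains, so that $e^{R(\tau)^2/2}\mu^{-\delta}$-type quantities remain integrable in $\tau$, is the delicate heart of the proof and the reason the heat transport formulation (which supplies the extra $\mu^{-3/\alpha}$ decay, hence room to take $\delta>0$) is essential.
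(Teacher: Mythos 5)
Your proposal has the right overall shape (local well-posedness, high-order energy plus curl estimates, Gronwall/bootstrap, finite propagation to handle the whole-space setting), but there are two substantive errors and one internal inconsistency that would prevent the argument from closing.

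First, and most importantly, your treatment of the Gaussian weight $w$ cannot work as stated. You propose to bound $w^{-1}\lesssim e^{R(\tau)^2/2}$ on the support using finite propagation, with $R(\tau)$ polynomial, and then hope the factors $\mu^{-\delta}$ beat it. But the support radius grows linearly, $R(\tau)\lesssim 1+K\tau$, so $w^{-1}\lesssim e^{(1+K\tau)^2/2}\sim e^{c\tau^2}$, while $\mu^{-\delta}\sim e^{-\delta\mu_1\tau}$ is only exponentially small; the product $e^{c\tau^2}e^{-\delta\mu_1\tau}$ diverges and no choice of $\delta$ or $\sigma$ can rescue this. The resolution in the paper is algebraic, not analytic: expanding $\frac{1}{w}(w\,\Lambda_{ij}(1+\beta)(\cdots))_{,k}$ by the Leibniz rule cancels the $w$'s entirely except for the logarithmic derivative $\frac{w_{,k}}{w}=-y_k$, leaving the equation in the form \eqref{E:THETAEQNEXPANDPRESSUREZERO} with \emph{no} $w^{-1}$ factor, only a linear factor $y_k$. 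On the support $|y|\lesssim 1+K\tau$ by the finite propagation Theorem~\ref{T:FINITEPROPAGATIONTHEOREM}, so the new terms grow polynomially in $\tau$, and this \emph{is} beaten by the exponential weights. You must make this cancellation explicit; without it the heart of the proof is missing.

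Second, your energy pairing carries the weight $w$: you integrate $\partial^\nu\mathbf V\cdot(\cdot)$ against $w\,dy$ at all orders. The paper deliberately drops $w$ from the high-order energy $\mathcal E^N$ and norm $\mathcal S^N$ (see \eqref{E:SNNORMGAMMALEQ5OVER3}, \eqref{E:EDEFGAMMAGREATER5OVER3}) precisely because a $w$-weighted Sobolev space over $\mathbb R^3$ does not yield the $L^\infty$ embeddings needed to control lower-order commutator and source terms. The $w$-weight is only used in the \emph{local} energies of Section~\ref{S:FINITEPROPAGATION}, where no high-order embeddings are needed; at high order the correct strategy is an unweighted $L^2$ energy together with the finite-propagation bound on $y$.

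Finally, you write $\mu(\tau)\sim 1+\tau$ and in the same breath claim $\mu_\tau/\mu\to\mu_1>0$; these are mutually contradictory. From $\det A(t)\sim 1+t^3$ one has $\mu(t)\sim 1+t$ \emph{in the original time variable}, but after $d\tau/dt=1/\mu$ the correct asymptotic is $e^{\mu_1\tau}\lesssim\mu(\tau)\lesssim e^{\mu_1\tau}$ (cf.\ \eqref{E:EXPMU1MUINEQGAMMALEQ5OVER3}); this exponential growth of $\mu$ in $\tau$ is exactly what makes $\mu^{-\delta}$ exponentially decaying and drives the decay rates $e^{-2\mu_0\tau}$ in \eqref{E:GLOBALBOUND}. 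With $\mu\sim 1+\tau$ your damping and curl-decay claims would not follow.
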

 We believe Theorem \ref{T:MAINTHEOREMGAMMALEQ5OVER3} is the first global existence result for the  almost isothermal  Euler system with heat transport.

Henceforth we assume we are working with a unique local solution $(\uptheta, {\bf V}):\mathbb{R}^3 \rightarrow \mathbb R^3\times \mathbb R^3$ to (\ref{E:THETAEQNLINEARENERGYFUNCTION})-(\ref{E:THETAICGAMMALEQ5OVER3}) such that $\mathcal{S}^N(\uptheta, {\bf V}) +\mathcal{B}^N[{\bf V}] < \infty$ and $\text{supp} \,\uptheta_0 \subseteq B_1(\mathbf{0})$, $\text{supp}\,\mathbf{V}_0 \subseteq B_1(\mathbf{0})$ on $[0,T^*]$ with $T^*>0$ fixed: Theorem \ref{T:LWPGAMMALEQ5OVER3} ensures the existence of such a solution, and furthermore we assume this local solution satisfies the a priori assumptions (\ref{E:APRIORI}).

To prove our main result, we apply high order energy estimates. A similar methodology to \cite{rickard2019global} enables us to handle exponentially growing-in-time coefficients. The exponentially growing time weights take advantage of the stabilizing effect of the expanding background affine motion and were used crucially in \cite{rickard2019global} to close estimates. As seen in our high order quantities, only spatial derivative operators are used so as to keep intact the exponential structure of time weights which function as stabilizers allowing us to close estimates.

The primary difficulty in the isothermal setting is the presence of the Gaussian function 
\begin{equation}
w=e^{-\frac{|y|^2}{2}}.
\end{equation}
In particular, as seen in our high order quantities, we cannot include $w$ as a weight function since then we will have no hope to control unavoidable lower order terms using any kind of weighted Sobolev embedding. However if we do not use $w$ as a weight function, we instead must contend with potentially unbounded $y$ terms arising from the following unavoidable calculation resulting from the Gaussian form of $w$
\begin{equation}
\frac{w,_k}{w}=-y_k.
\end{equation}
Therefore without any further analysis, since we are working in the whole space, there is no chance to close estimates containing clearly unbounded $y$ terms. Furthermore since such $y$ terms arise from the nonlinear pressure term, they persist throughout the levels of higher order derivatives so we must deal with them at each stage.  There is no such difficulty in the vacuum free boundary problem since in that setting $w$ behaves like a distance function on a bounded domain instead. 

The key to overcoming this issue is that we are able to establish a finite propagation result for our equation. This will be proven immediately below in Theorem \ref{T:FINITEPROPAGATIONTHEOREM}. We will show that starting from compactly supported initial data as specified in Theorems (\ref{T:LWPGAMMALEQ5OVER3})-(\ref{T:MAINTHEOREMGAMMALEQ5OVER3}), the support of our solution grows  at most  linearly in $\tau$ from $B_1(\mathbf{0})$, the support of our initial data. Therefore $y$ terms can be estimated by a function linear in $\tau$.

This finite propagation result is applied in conjunction with the crucial exponentially growing time weights and hence it will be seen both in the energy estimates and curl estimates that we will need to be careful with any expression involving $y$ to establish sufficient time decay for our norm. In particular in the curl estimates, novel algebraic manipulations of time weights are required but fortunately, we are still able to close estimates because we have sufficient time decay from our equation.

On this note, it is in particular the temperature formulation which provides us with slightly more time decay than the isothermal setting without temperature and this allows us to close estimates. Our spatial perturbation of the temperature through $\beta$ (\ref{E:ORIGINALT0}) on the other hand means we have to deal with source terms in our estimates which arise from the last term of (\ref{E:THETAEQNLINEARENERGYFUNCTION}). Through the compact support and smallness of $\beta$, we can use similar methods to \cite{PHJ2019} to overcome these source terms and still establish global existence.

Furthermore, from the time decay provided by the temperature formulation, many of our terms will contain time weights with negative powers. Thus to control such terms without this decay, we apply a coercivity estimate technique from \cite{rickard2019global}. This technique employs the fundamental theorem of calculus to express $\uptheta$ in terms of $\mathbf{V}$ and initial data with the coercivity estimates given in Lemma \ref{L:COERCIVITY}. 

As mentioned above, we prove the fundamental finite propagation result immediately below in Section \ref{S:FINITEPROPAGATION}. Then in Section \ref{S:ENERGYESTIMATES} we prove our higher order energy estimates and in Section \ref{S:CURLESTIMATES} we establish high order curl estimates. Finally in Section \ref{S:CONTINUITY}, we prove our Main Theorem \ref{T:MAINTHEOREMGAMMALEQ5OVER3} using a continuity argument.

\section{Finite Propagation}\label{S:FINITEPROPAGATION}
We first state our finite propagation theorem and then prove the local energy and curl estimate results required to prove the theorem, with the final proof of the theorem given at the end of the Section.

\begin{theorem}[Finite Propagation]\label{T:FINITEPROPAGATIONTHEOREM}
Let $(\uptheta, {\bf V}):\Omega \rightarrow \mathbb R^3\times \mathbb R^3$ be a unique local solution to (\ref{E:THETAEQNLINEARENERGYFUNCTION})-(\ref{E:THETAICGAMMALEQ5OVER3}) on $[0,T^*]$ for $T^*>0$ fixed with $\text{supp} \,\uptheta_0 \subseteq B_1(\mathbf{0})$, $\text{supp}\,\mathbf{V}_0 \subseteq B_1(\mathbf{0})$ and assume $(\uptheta, {\bf V})$ satisfies the a priori assumptions (\ref{E:APRIORI}). Fix $N\geq 4$.  Suppose $\beta$ in (\ref{E:THETAEQNLINEARENERGYFUNCTION}) satisfies $\| \beta \|^2_{H^{N+1}(\mathbb{R}^3)} \leq \lambda$ and $\text{supp} \, \beta \subseteq B_1(\mathbf{0})$ where  $\lambda > 0$ is fixed.  Then there exists $K>0$ such that
\begin{equation}
(\uptheta(\tau,y),\mathbf{V}(\tau,y))=(0,0,0),
\end{equation}
for $\tau \in [0,T^*]$ and $|y| > 1+K\tau$.
\end{theorem}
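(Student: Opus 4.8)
The plan is to run a finite speed of propagation argument of Friedrichs type, adapted to the weighted-in-$\tau$ structure of \eqref{E:THETAEQNLINEARENERGYFUNCTION}. For a parameter $K>0$ to be chosen and a point $y_0$ with $|y_0|>1$, consider the backward cone $\mathcal{C}_{y_0}=\{(\tau,y): |y-y_0|\le (|y_0|-1)-K\tau,\ 0\le\tau\le\tau_0\}$ with $\tau_0=(|y_0|-1)/K$, and, more practically, work with the truncated local energy on the exterior region $|y|\ge 1+K\tau$. First I would introduce a high-order \emph{local} energy $E_K(\tau)=\sum_{|\nu|\le N}\int_{|y|\ge 1+K\tau}\big(\mu^\sigma|\partial^\nu\mathbf V|^2+|\partial^\nu\uptheta|^2\big)\,dy$ together with the matching gradient/divergence contributions (and the analogous local curl quantity $B_K(\tau)$ built from $\mathrm{Curl}_{\Lambda\mathscr A}$), mirroring $\mathcal S^N$ and $\mathcal B^N$ but restricted to the moving exterior domain. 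Since $\uptheta_0,\mathbf V_0,\beta$ are all supported in $B_1(\mathbf 0)$, we have $E_K(0)=0$ and $B_K(0)=0$; the goal is to show $E_K\equiv 0$, which forces $(\uptheta,\mathbf V)=0$ on $|y|>1+K\tau$.

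The mechanism is a differential inequality. Differentiating $E_K$ in $\tau$ produces (i) the usual interior energy-flux terms obtained by testing $\partial^\nu$ of \eqref{E:THETAEQNLINEARENERGYFUNCTION} against $\mu^\sigma\partial^\nu\mathbf V$ and $\partial^\nu\uptheta$, integrated over the exterior domain, plus (ii) a boundary term $-K\int_{|y|=1+K\tau}(\cdots)\,dS$ coming from the moving boundary. Crucially, on the exterior domain $w=e^{-|y|^2/2}$ never vanishes, so away from the flux terms every coefficient in \eqref{E:THETAEQNLINEARENERGYFUNCTION}, including $w_{,k}/w=-y_k$ and the principal coefficient $\overline C\mu^{-\delta}\Lambda_{ij}(1+\beta)([\mathscr A_\eta]_j^k\mathscr J^{-1/\alpha})$, is smooth and, under the a priori bounds \eqref{E:APRIORI}, bounded by a (possibly $\tau$-dependent but locally bounded) constant times $1+|y|$; on the moving exterior strip near the boundary $|y|\le 1+K\tau+1$, so these weights are harmless there. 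The key point is that the \emph{characteristic speed} of the principal part is $\lesssim \mu^{-\delta/2}\|\Lambda^{1/2}\mathscr A^{1/2}\|$, which under \eqref{E:APRIORI} and Lemma \ref{L:AASYMPTOTICS} (giving $\mu\sim 1+\tau$, hence $\mu^{-\delta}\le 1$) is bounded by an absolute constant $c_0$; choosing $K>c_0$ makes the boundary contribution $-K\int_{|y|=1+K\tau}(\cdots)dS$ dominate the flux terms crossing that surface, so they can be discarded. What remains is a bound of the form $\frac{d}{d\tau}\big(E_K+B_K\big)\lesssim C(\tau)\big(E_K+B_K\big)$ with $C(\tau)$ locally integrable on $[0,T^*]$ — the lower-order terms (the $\Lambda\uptheta$ term, the $\Gamma^*$ term, the commutators from $\partial^\nu$ hitting $\mathscr A,\mathscr J,\Lambda,\beta$, and the $\Lambda_{ik}(w\beta)_{,k}/w$ source, which is supported in $B_1$ hence contributes nothing on the exterior domain) are all absorbed this way, using $\|D\mathbf V\|_{L^\infty},\|D\mathbf V_\tau\|_{L^\infty}\le C$ and $\|\mathscr A-\mathbf{Id}\|_{L^\infty},\|\mathscr J-\mathbf{Id}\|_{L^\infty}<\tfrac13$. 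Gr\"onwall with $E_K(0)+B_K(0)=0$ then yields $E_K+B_K\equiv 0$ on $[0,T^*]$.

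Two steps warrant care and are the likely obstacles. The first is the curl estimate: $\mathrm{Curl}_{\Lambda\mathscr A}\partial^\nu\mathbf V$ is not independently controlled by the energy, so one must derive a \emph{localized} evolution equation for it by applying $\mathrm{Curl}_{\Lambda\mathscr A}$ (or rather taking the antisymmetric part of $\nabla_\eta$ of) the $\uptheta$-equation, obtaining a transport-type identity for the curl with source terms that are themselves lower order in the energy — this is the analog of the curl equations invoked in the sketch of Theorem \ref{T:LWPGAMMALEQ5OVER3}, now restricted to $|y|\ge 1+K\tau$, and it couples $B_K$ back into the Gr\"onwall system without new flux terms since the curl identity has no top-order principal part. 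The second delicate point is the appearance of the factor $1/w=e^{|y|^2/2}$ multiplying the principal term in \eqref{E:THETAEQNLINEARENERGYFUNCTION}: when this term is integrated against $\partial^\nu\mathbf V$ on the exterior domain, an integration by parts moves a derivative onto $w\Lambda_{ij}(1+\beta)([\mathscr A]_j^k\mathscr J^{-1/\alpha}-\delta_j^k)$, and the Gaussian $w$ cancels exactly against $1/w$ before any derivative falls on it — so no growing exponential survives and, as in the full energy estimates to be carried out in Section \ref{S:ENERGYESTIMATES}, the resulting quadratic form is the one producing $\|\nabla_\eta\partial^\nu\uptheta\|^2$ on the exterior domain; one simply keeps track of the single boundary term this integration by parts generates and folds it into the moving-boundary flux, where the choice $K>c_0$ again makes it have the good sign. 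Collecting these, Gr\"onwall closes and the theorem follows with $K$ any fixed constant exceeding $c_0$ (e.g. $K=2\sup_{\tau}\mu^{-\delta/2}\|\Lambda\|^{1/2}\|\mathscr A\|^{1/2}$, finite by \eqref{E:APRIORI} and Lemma \ref{L:AASYMPTOTICS}).
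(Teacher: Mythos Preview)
Your overall strategy---local energy estimate plus a moving-boundary flux dominated by a large enough $K$, closed by Gr\"onwall from zero initial data---is the right one, and it is the paper's strategy too. But there is a genuine gap in how you propose to handle the $y$-growth, and it would cause your argument to fail as written.

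You build an \emph{unweighted} energy $E_K(\tau)=\sum_{|\nu|\le N}\int_{|y|\ge 1+K\tau}(\cdots)\,dy$ on the \emph{unbounded} exterior region. After you expand or integrate by parts the pressure term, the factor $w_{,k}/w=-y_k$ survives: even at zeroth order one is left with a bulk contribution of the schematic form
\[
\int_{|y|\ge 1+K\tau} y_k\,\Lambda_{ij}(1+\beta)\big(\mathscr A_j^k\mathscr J^{-1/\alpha}-\delta_j^k\big)\,\Lambda^{-1}_{im}\partial_\tau\uptheta^m\,dy,
\]
which is of size $\int_{\text{exterior}} |y|\,|D\uptheta|\,|\mathbf V|\,dy$. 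Your claim that these $1+|y|$ weights are harmless because ``on the moving exterior strip near the boundary $|y|\le 1+K\tau+1$'' applies only to the boundary flux on $\{|y|=1+K\tau\}$; the \emph{bulk} integral above is over the entire unbounded exterior, where $|y|$ is not bounded. Consequently the right-hand side cannot be estimated by $C(\tau)(E_K+B_K)$ with any locally integrable $C(\tau)$, and the Gr\"onwall step does not close. The cancellation you describe (``the Gaussian $w$ cancels exactly against $1/w$'') only removes the exponential factor, not the polynomial one; the linear-in-$y$ term persists.

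The paper resolves this by three linked choices that differ from yours. First, it works not on the exterior but on a \emph{bounded} backward cone $U(s)=\{|x-y|\le K(\tau-s)\}$ with a fixed apex $y$; on such a cone $|x|\le K\tau+|y|$, so $|x|$-dependent constants are permitted in the Gr\"onwall bound (they are fixed once $y$ is fixed). Second, it uses a \emph{zeroth-order} local energy rather than one that mirrors $\mathcal S^N$; the paper notes explicitly that high-order local quantities would require derivative control unavailable at the local level. Third---and this is the key device---it keeps $w$ as a weight: one multiplies the form of the equation \emph{before} dividing by $w$, i.e. $w(\cdots)+\overline C\mu^{-\delta}(w\,\Lambda_{ij}(1+\beta)(\mathscr A_j^k\mathscr J^{-1/\alpha}-\delta_j^k))_{,k}+\overline C\mu^{-\delta}\Lambda_{ik}(w\beta)_{,k}=0$, by $\Lambda^{-1}_{im}\partial_\tau\uptheta^m$. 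After integration by parts the principal term produces $-\int w\,(\cdots)\,\partial_\tau\uptheta^m_{,k}$ with \emph{no} $y_k$-term at all; the only explicit $y$ appears through the source $(w\beta)_{,k}$, and there one uses either the compact support of $\beta$ or the Gaussian bound $w|x|^2\lesssim 1$. With the $w$-weighted zeroth-order energy $e(s)$ and the matching $w$-weighted local curl quantities $b_{\uptheta},b_{\mathbf V}$ (handled via the curl equations of Lemma~\ref{L:CURLEQUATIONDERIVATION}), one obtains the closed inequality $e(s)\lesssim e(0)+b_\uptheta(0)+b_{\mathbf V}(0)+\int_{U(0)}(|\beta|^2+|D\beta|^2)+\int_0^s\sup_{s''\le s'}e(s'')\,ds'$ on each cone, and Gr\"onwall with $|y|>1+K\tau$ gives $e\equiv 0$.
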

Local energy estimates are used to prove this theorem. To this end, we first define the required local energy and curl quantities. For all of the following definitions, we fix $y \in \mathbb{R}^3$, $\tau \in [0,T^*]$ and $K>0$, and with $\tau$ fixed, then let $s \in [0,\tau]$. First define the cone $C_s$ 
\begin{equation}
C_s:=\{ (x,s') : |x-y| \leq K (\tau-s'), 0 \leq s' \leq s \}.
\end{equation}
Cross-sections of $C_s$ are
\begin{equation}\label{E:CROSSSECTIONDEFINE}
U(s'):= \{ x: |x-y| \leq K(\tau-s') \} \text{ for } s' \in [0,s].
\end{equation}
We then define our local energy and boundary energy, where we recall the definitions of $d_i$ (\ref{E:LAMBDADECOMP}) and $\mathscr{N}_\nu$ (\ref{E:NNU}),
\begin{align}
e(s)&:=\frac{1}{2} \int_{U(s)} \left(w \mu^{\sigma} \left\langle \Lambda^{-1} \mathbf{V}, \mathbf{V} \right\rangle +\overline{C} w \mu^{-\delta} |\uptheta|^2 + w \overline{C} \mu^{-\delta} (1+\beta) \mathscr{J}^{-\frac{1}{\alpha}}\Big[\sum_{i,j=1}^3 d_id_j^{-1}( [\mathscr{N}_0]^j_i )^2 \Big] \right.  \notag \\
& + \left. + \frac{\overline{C}}{\alpha} w \mu^{-\delta} (1+\beta)(\text{div}\, \uptheta)^2 \right) (s,x)  \, dx \notag \\
e_\partial(s)&:=\frac{1}{2} \int_{\partial U(s)}  \left(w \mu^{\sigma} \left\langle \Lambda^{-1} \mathbf{V}, \mathbf{V} \right\rangle +\overline{C} w \mu^{-\delta} |\uptheta|^2 + w \overline{C} \mu^{-\delta} (1+\beta) \mathscr{J}^{-\frac{1}{\alpha}} \Big[\sum_{i,j=1}^3 d_id_j^{-1} ( [\mathscr{N}_0]^j_i )^2 \Big] \right. \notag \\
&\left. + \frac{\overline{C}}{\alpha}w \mu^{-\delta} (1+\beta)(\text{div}\, \uptheta)^2 \right)(s,x)  \, dS(x). 
\end{align}
As is the case in high order estimates, modified curl energy arise during energy estimates which are not a priori controlled by the norm $\mathcal{S}^N(\tau)$. The quantities we will need to control are the local $\uptheta$ and $\mathbf{V}$ curl energies as well as the boundary $\uptheta$ curl energy
\begin{align}
b_\uptheta(s)&:=\int_{U(s)} \left( \mu^{-\delta} w  (1+\beta)| \text{Curl}_{\Lambda \mathscr{A}} \uptheta |^2 \right)(s,x) \, dx \notag \\
b_\mathbf{V}(s)&:=\int_{U(s)} \left( \mu^{-\delta} w (1+\beta) | \text{Curl}_{\Lambda \mathscr{A}} \mathbf{V} |^2 \right)(s,x) \, dx \notag \\
b_{\uptheta \partial}(s)&:=\int_{\partial U(s)} \left( \mu^{-\delta} w (1+\beta) | \text{Curl}_{\Lambda \mathscr{A}} \uptheta |^2\right)(s,x) \, dS(x)
\end{align}
\begin{remark}
We include $w$ at this level unlike our higher order quantities which is allowable because we do not need high order embeddings here. In fact it is necessary because we do not have higher order derivative control for our local quantities.
\end{remark}

We now prove our local energy estimate result.
\begin{proposition}\label{P:LOCALENERGYESTIMATE}
Let $(\uptheta, {\bf V}):\Omega \rightarrow \mathbb R^3\times \mathbb R^3$ be a unique local solution to (\ref{E:THETAEQNLINEARENERGYFUNCTION})-(\ref{E:THETAICGAMMALEQ5OVER3}) on $[0,T^*]$ for $T^*>0$ fixed with $\text{supp} \,\uptheta_0 \subseteq B_1(\mathbf{0})$, $\text{supp}\,\mathbf{V}_0 \subseteq B_1(\mathbf{0})$ and assume $(\uptheta, {\bf V})$ satisfies the a priori assumptions (\ref{E:APRIORI}). Fix $N\geq 4$.  Suppose $\beta$ in (\ref{E:THETAEQNLINEARENERGYFUNCTION}) satisfies $\| \beta \|^2_{H^{N+1}(\mathbb{R}^3)} \leq \lambda$ and $\text{supp} \, \beta \subseteq B_1(\mathbf{0})$ where  $\lambda > 0$ is fixed.  Fix $y \in \mathbb{R}^3$ and $\tau \in [0,T^*]$. Then there exists $K>0$ in (\ref{E:CROSSSECTIONDEFINE}) such that for $s \in [0,\tau]$, 
\begin{equation}\label{E:PREENERGYINEQUALITYFINITEPROP}
e(s) \lesssim e(0)+b_{\uptheta}(0)+ \int_{U(0)}  |\beta|^2 + |D\beta|^2 dx + b_{\uptheta}(s)+\int_0^s b_{\mathbf{V}} (s') ds'+\int_0^s e(s') ds'.
\end{equation}
\end{proposition}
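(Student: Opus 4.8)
I would run the classical backward light-cone energy method for the second-order hyperbolic system \eqref{E:THETAEQNLINEARENERGYFUNCTION}, choosing the aperture $K$ of $C_s$ large enough to absorb all lateral boundary contributions --- this is the finite-speed-of-propagation mechanism. With $y,\tau$ fixed as in the statement, contract \eqref{E:THETAEQNLINEARENERGYFUNCTION} (in the free index $i$) with the natural multiplier $w[\Lambda^{-1}]_{mi}\mathbf V_m$ and integrate over the cross section $U(s)$, performing one spatial integration by parts in the pressure term $\tfrac{\overline C\mu^{-\delta}}{w}(w\Lambda_{ij}(1+\beta)(\mathscr A_j^k\mathscr J^{-\frac1\alpha}-\delta_j^k))_{,k}$. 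The term $\mu^\sigma\partial_{\tau\tau}\uptheta_i$ produces $\partial_\tau$ of the kinetic density $\tfrac12 w\mu^\sigma\langle\Lambda^{-1}\mathbf V,\mathbf V\rangle$; the term $\overline C\mu^{-\delta}\Lambda_{i\ell}\uptheta_\ell$ produces $\partial_\tau$ of $\tfrac12\overline C w\mu^{-\delta}|\uptheta|^2$; and the pressure term produces, modulo the lower-order and curl contributions discussed below, $\partial_\tau$ of the gradient density $\tfrac12 w\overline C\mu^{-\delta}(1+\beta)\mathscr J^{-\frac1\alpha}\sum_{i,j}d_id_j^{-1}([\mathscr N_0]^j_i)^2$ together with the divergence density $\tfrac{\overline C}{2\alpha}w\mu^{-\delta}(1+\beta)(\mathrm{div}\,\uptheta)^2$. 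Using the transport identity $\tfrac{d}{ds}\int_{U(s)}g=\int_{U(s)}\partial_s g-K\int_{\partial U(s)}g\,dS$ (valid since $U(s)$ shrinks with speed $K$), this yields an identity of the schematic form
\begin{equation*}
\tfrac{d}{ds}e(s)=\mathrm{Bulk}(s)\;-\;K\!\int_{\partial U(s)}(\text{energy density})\,dS\;+\;\mathrm{Flux}(s),
\end{equation*}
where $\mathrm{Flux}(s)$ collects the $\partial U(s)$ integrals produced by the spatial integration by parts.

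\textbf{The bulk term.} $\mathrm{Bulk}(s)$ contains: the terms in which $\partial_\tau$ falls on the coefficients $\mu^\sigma,\mu^{-\delta},\Lambda,\Lambda^{-1},\mathscr J^{-\frac1\alpha},P,Q$; the friction contributions of $\mu_\tau\mu^{-1+\sigma}\partial_\tau\uptheta_i$ and $2\mu^\sigma\Gamma^*_{ij}\partial_\tau\uptheta_j$; the genuinely nonlinear (cubic and higher) part of the pressure term; and a curl defect. Invoking the a priori assumptions \eqref{E:APRIORI} (the $L^\infty$ bounds on $\mathscr A-\mathbf{Id}$, $D\uptheta$, $\mathscr J-\mathbf{Id}$, $D\mathbf V$) together with the time-uniform bounds $|\mu_\tau/\mu|\lesssim1$, $|\Gamma^*|\lesssim1$, $|\partial_\tau\Lambda|\lesssim1$, $0<c\le d_i(\tau)\le C$, and $\mu^{-\delta}\lesssim\mu^{\sigma}$ --- all consequences of the structure of the background affine motion recorded in Lemma \ref{L:AASYMPTOTICS} and of the change of time variable $d\tau/dt=1/\mu$ --- each of these is pointwise dominated by a constant multiple of the energy density, so $\int_{U(s')}\mathrm{Bulk}\lesssim e(s')$. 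The curl defect arises because the pressure bulk term and $\partial_\tau$(gradient density) do not match exactly; the discrepancy is a pairing of $\mathrm{Curl}_{\Lambda\mathscr A}\mathbf V$ with $\mathrm{Curl}_{\Lambda\mathscr A}\uptheta$, and writing $\mathrm{Curl}_{\Lambda\mathscr A}\mathbf V=\partial_\tau(\mathrm{Curl}_{\Lambda\mathscr A}\uptheta)+(\text{lower order})$ and integrating by parts in $\tau$ produces precisely the boundary-in-time values $b_\uptheta(s)+b_\uptheta(0)$ and the time integral $\int_0^s b_{\mathbf V}$. Finally, the $\beta$-source $\tfrac{\overline C\mu^{-\delta}}{w}\Lambda_{ik}(w\beta)_{,k}$, contracted with $w[\Lambda^{-1}]_{mi}\mathbf V_m$ and using $w_{,k}/w=-y_k$, equals $\overline C\mu^{-\delta}w(\beta_{,k}-y_k\beta)\mathbf V_k$; since $\mathrm{supp}\,\beta\subseteq B_1(\mathbf 0)$ one has $|y_k\beta|\le|\beta|$ on the support, so by Young this is $\lesssim\mu^{-\delta}w(|D\beta|^2+|\beta|^2)+\mu^{-\delta}w|\mathbf V|^2$, which after integration over $s'\in[0,s]$ and the inclusion $U(s')\subseteq U(0)$ contributes the term $\int_{U(0)}|\beta|^2+|D\beta|^2\,dx$ and an additional $\int_0^s e$.

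\textbf{The boundary term and conclusion.} Since $\mathscr A_j^k\mathscr J^{-\frac1\alpha}-\delta_j^k$ vanishes when $\uptheta=0$, it is $O(|D\uptheta|)$, so pointwise on $\partial U(s)$ one has $|\mathrm{Flux}|\lesssim\mu^{-\delta}w\,|\mathbf V|\,|D\uptheta|\lesssim\mu^{-\delta}w(|\mathbf V|^2+|D\uptheta|^2)$ with implicit constant depending only on $\overline C$, $\|(1+\beta)\mathscr J^{-\frac1\alpha}\|_{L^\infty}$, and the ellipticity constants of $\Lambda$, hence \emph{independent of $K$}. Moreover the energy density is nonnegative ($\Lambda^{-1}$ positive definite, $1+\beta>0$ for $\lambda$ small, $d_id_j^{-1}>0$, $\mathscr J^{-\frac1\alpha}>0$), the quantity $\mu^{-\delta}w|D\uptheta|^2$ is controlled by the gradient density via $\sum_{i,j}d_id_j^{-1}([\mathscr N_0]^j_i)^2\gtrsim|\nabla_\eta\uptheta|^2\gtrsim|D\uptheta|^2$ (using $\mathscr A\approx\mathbf{Id}$ and the eigenvalue bounds on $\Lambda$), and $\mu^{-\delta}w|\mathbf V|^2\lesssim\mu^\sigma w|\mathbf V|^2$ is controlled by the kinetic density. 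Hence there is a fixed $K>0$ --- it suffices to take $K$ above the time-uniformly bounded propagation speed $\sup_\tau\sqrt{\overline C\mu^{-3/\alpha}\,\|\Lambda\|\,\|(1+\beta)\mathscr J^{-\frac1\alpha}\|_{L^\infty}}<\infty$, which in fact decays in $\tau$ --- for which $\mathrm{Flux}(s)-K\int_{\partial U(s)}(\text{energy density})\,dS\le0$ pointwise on $\partial U(s)$; the same $K$ disposes of the curl boundary term $b_{\uptheta\partial}$. Dropping this nonpositive boundary contribution and integrating $\tfrac{d}{ds}e(s)$ from $0$ to $s$ gives \eqref{E:PREENERGYINEQUALITYFINITEPROP}.

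\textbf{Main obstacle.} The crux is securing the favorable sign on the lateral boundary, which is delicate here because the energy carries two competing time weights: $\mu^\sigma$, which grows (indeed exponentially in $\tau$, since $\mu\sim e^\tau$), on the kinetic part, and $\mu^{-\delta}$, which decays, on the gradient and potential parts. The flux absorption must therefore be carried out weight by weight and relies on the comparison $\mu^{-\delta}\lesssim\mu^{\sigma}$ and on the uniform ellipticity of $\Lambda$, neither of which is automatic and both of which stem from the specific expanding background. A secondary subtlety is that $b_\uptheta$ genuinely cannot be reabsorbed into $e$: it must be carried to the right-hand side and closed later in tandem with the local curl estimate, which is why one extracts $b_\uptheta(s)+b_\uptheta(0)$ by integration by parts in $\tau$ rather than by a crude Young inequality on the curl pairing.
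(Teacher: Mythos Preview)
Your overall strategy coincides with the paper's: contract with $w\Lambda^{-1}\mathbf V$, use the transport identity on the shrinking sections $U(s')$, integrate the pressure term by parts in space, and choose $K$ to beat the spatial flux; then integrate the curl pairing by parts in $\tau$ to produce $b_\uptheta(s)$, $b_\uptheta(0)$, and $\int_0^s b_{\mathbf V}$. Your treatment of the $\beta$ source via $\text{supp}\,\beta\subseteq B_1(\mathbf 0)$ is a clean alternative to the paper's use of $w(x)|x|^2\lesssim 1$.

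There is, however, a genuine gap in your claim that ``the same $K$ disposes of the curl boundary term $b_{\uptheta\partial}$''. When you integrate the curl pairing $\int_0^s\int_{U(s')}[\text{Curl}_{\Lambda\mathscr A}\uptheta]\cdot[\nabla_\eta\mathbf V]$ by parts in $\tau$, you must pass the $\partial_\tau$ outside the moving spatial integral via the transport formula, and this produces a \emph{lateral} boundary term
\[
K\int_0^s\int_{\partial U(s')}\overline C\,\mu^{-\delta}w(1+\beta)\mathscr J^{-\frac1\alpha}\,[\text{Curl}_{\Lambda\mathscr A}\uptheta]^\ell_i\,\Lambda^{-1}_{im}[\nabla_\eta\uptheta]^m_\ell\,dS\,ds'
\]
that already carries the factor $K$. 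After Young this splits as $\tfrac{K}{\kappa}\int_0^s b_{\uptheta\partial}+\kappa K\int_0^s e_\partial$; the second piece is absorbed by the favorable $+K\int_0^s e_\partial$ for small $\kappa$, but the first piece scales with $K$ and cannot be absorbed by choosing $K$ large, since the ratio to $K\int_0^s e_\partial$ is independent of $K$. The paper closes this with a separate identity: applying the transport formula to $\partial_\tau\big(\mu^{-\delta}w(1+\beta)|\text{Curl}_{\Lambda\mathscr A}\uptheta|^2\big)$ gives
\[
K\int_0^s b_{\uptheta\partial}(s')\,ds'=\int_0^s\int_{U(s')}w(1+\beta)\,\partial_\tau\big[\mu^{-\delta}|\text{Curl}_{\Lambda\mathscr A}\uptheta|^2\big]\,dx\,ds'\;-\;b_\uptheta(s)\;+\;b_\uptheta(0),
\]
and expanding the bulk term on the right yields $\int_0^s b_{\mathbf V}$ plus remainders bounded by $\int_0^s e$. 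This second mechanism is what actually places $b_\uptheta(s)$ and $\int_0^s b_{\mathbf V}$ on the right of \eqref{E:PREENERGYINEQUALITYFINITEPROP}; your $\tau$-IBP on the curl pairing by itself is not enough.
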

\begin{proof}
\textbf{Energy estimates} First, since we include $w$ in our local energy, rewrite our $\uptheta$ equation (\ref{E:THETAEQNLINEARENERGYFUNCTION}) as follows
\begin{align}\label{E:THETAEQNLINEARENERGYFUNCTION2}
&w ( \mu^{\sigma} \partial_{\tau \tau} \uptheta_i + \mu_{\tau} \mu^{-1+\sigma} \partial_\tau \uptheta_i + 2 \mu^{\sigma} \Gamma^*_{ij} \partial_{\tau} \uptheta_j + \overline{C} \mu^{-\delta} \Lambda_{i \ell} \uptheta_\ell ) \notag \\
&+ \overline{C} \mu^{-\delta} (w \Lambda_{ij}  (1+\beta) (\mathscr{A}_j^k \mathscr{J}^{-\frac{1}{\alpha}}  - \delta_j^k))_{,k} +\overline{C} \mu^{-\delta} \Lambda_{ik} (w \beta)_{,k}=0.
\end{align}
Multiply (\ref{E:THETAEQNLINEARENERGYFUNCTION2}) by $\Lambda_{i m}^{-1} \partial_{\tau} \uptheta^m$, and integrate over $C_s$
\begin{align}\label{E:POSTMULTIPLYTHETA}
&\int_0^s \int_{U(s')} w ( \mu^\sigma \partial_{\tau \tau} \uptheta_i +  \mu^{-1+\sigma} \mu_{\tau} \partial_{\tau} \uptheta_i + 2 \mu^{\sigma} \Gamma^*_{ij} \partial_{\tau} \uptheta_j + \overline{C}  \Lambda_{i \ell} \mu^{-\delta} \uptheta_\ell ) \Lambda^{-1}_{i m} \partial_{\tau} \uptheta^m dx ds' \notag \\
&+ \int_0^s \int_{U(s')}  \overline{C} \mu^{-\delta} ( w  \Lambda_{ij} (1+\beta) (\mathscr{A}_j^k \mathscr{J}^{-\frac{1}{\alpha}} -\delta_j^k ) ),_k  \Lambda^{-1}_{i m} \partial_{\tau} \uptheta^m dx ds' \notag \\
&+ \int_0^s \int_{U(s')} \overline{C} \mu^{-\delta} \Lambda_{ik} (w \beta)_{,k} \Lambda^{-1}_{i m} \partial_{\tau} \uptheta^m dx ds' =0.
\end{align}
Recognizing the perfect time derivative structure of the first integral in (\ref{E:POSTMULTIPLYTHETA}), we rewrite it as
\begin{align}\label{E:FIRSTINTEGRALPOSTMULTIPLY}
&\int_0^s \int_{U(s')} \frac{1}{2} \frac{d}{d \tau} ( w \mu^\sigma \left\langle \Lambda^{-1} \mathbf{V}, \mathbf{V} \right\rangle + \overline{C} w \mu^{-\delta} |\uptheta|^2 ) dx ds'  \notag \\
&+ \int_0^s \int_{U(s')} \left(-\frac{\sigma \mu^{\sigma-1} \mu_{\tau}}{2}+\mu^{-1+\sigma} \mu_{\tau} \right) w \left\langle \Lambda^{-1} \mathbf{V}, \mathbf{V} \right\rangle dx ds' \notag \\
&+ \int_0^s \int_{U(s')} -\frac{\mu^{\sigma}}{2} w \left\langle \partial_{\tau} \Lambda^{-1} \mathbf{V}, \mathbf{V} \right\rangle + 2 \mu^{\sigma} w \left\langle \Lambda^{-1} \mathbf{V}, \Gamma^* \mathbf{V} \right\rangle dx ds' + \delta \int_0^s \int_{U(s')} \mu^{-\delta-1} \mu_{\tau} w |\uptheta|^2 dx ds'.
\end{align}
For the first integral in (\ref{E:FIRSTINTEGRALPOSTMULTIPLY}), apply the Differentiation Formula for Moving Regions, which interchanges the derivative and integrals, and then the Fundamental Theorem of Calculus to write it as,
\begin{align}
&\int_{U(s)} \frac{1}{2} \left( w \mu^\sigma \left\langle \Lambda^{-1} \mathbf{V}, \mathbf{V} \right\rangle+\overline{C} w \mu^{-\delta} |\uptheta|^2 \right)(s,x) \, dx  - \int_{U(0)} \frac{1}{2} \left( w \mu^\sigma \left\langle \Lambda^{-1} \mathbf{V}, \mathbf{V} \right\rangle+\overline{C} w \mu^{-\delta} |\uptheta|^2 \right) (0,x) \, dx \notag \\
&+\int_0^s \int_{\partial U(s')} K \frac{1}{2} \left( w \mu^\sigma \left\langle \Lambda^{-1} \mathbf{V}, \mathbf{V} \right\rangle+\overline{C} w \mu^{-\delta} |\uptheta|^2 \right) \, dS ds'.
\end{align}
The second integral in (\ref{E:FIRSTINTEGRALPOSTMULTIPLY}) is positive, 
$$-\frac{\sigma \mu^{\sigma-1} \mu_\tau}{2}+\mu^{-1+\sigma} \mu_\tau=\frac{(2-\sigma)\mu^{\sigma-1} \mu_\tau}{2} \text{ and } 0<\sigma<2,$$
so we leave it as is. 

For the third integral in (\ref{E:FIRSTINTEGRALPOSTMULTIPLY}), note with $A=\mu O$, $\mu=(\det A)^{\frac{1}{3}}, O \in \text{SL}(3)$, we have
\begin{equation} 
\partial_\tau \Lambda^{-1}-4 \Lambda^{-1}\Gamma^\ast = \partial_\tau(O^\top O) - 4 O^\top OO^{-1}O_\tau = - \partial_\tau\Lambda^{-1}  + 2(O^\top_\tau O - O^\top O_\tau).
\end{equation}
Since $O^\top_\tau O - O^\top O_\tau$ is anti-symmetric, we can reduce the third integral in (\ref{E:FIRSTINTEGRALPOSTMULTIPLY}) to
\begin{equation}
\int_0^s \int_{U(s')} \frac{\mu^\sigma}{2} w \left\langle \partial_\tau \Lambda^{-1} \mathbf{V}, \mathbf{V} \right\rangle dx ds'
\end{equation}
which is then bounded by $\int_0^s e(s') ds$ via $\partial_\tau\Lambda^{-1}=-\Lambda^{-1}(\partial_\tau \Lambda)\Lambda^{-1}$ and (\ref{E:LAMBDABOUNDSGAMMALEQ5OVER3}). The fourth integral in (\ref{E:FIRSTINTEGRALPOSTMULTIPLY}) is positive so we leave it as is. For the second integral in (\ref{E:POSTMULTIPLYTHETA}), integrate by parts in $x$ and apply the identities (\ref{E:AJIDENTITYENERGY})-(\ref{E:LAMBDAIDENTITYENERGY})
\begin{align}
&-\int_0^s \int_{U(s')} \overline{C} \mu^{-\delta} w \Lambda_{ij} (1+\beta)( \mathscr{A}_j^k \mathscr{J}^{-\frac{1}{\alpha}} -\delta_j^k)\Lambda^{-1}_{i m} \partial_{\tau} \uptheta^m,_k dx ds' \notag \\
&+\int_0^s \int_{\partial U(s')} \overline{C} \mu^{-\delta} w \Lambda_{ij} (1+\beta)( \mathscr{A}_j^k \mathscr{J}^{-\frac{1}{\alpha}} -\delta_j^k)\Lambda^{-1}_{i m} \partial_{\tau} \uptheta^m \mathbf{n}^k dS ds' \notag \\
&= \int_0^s \int_{U(s')}  \overline{C} \mu^{-\delta} w  (1+\beta) \mathscr{J}^{-\frac{1}{\alpha}} \Lambda_{\ell j}[\nabla_\eta\uptheta]^i_j  \Lambda_{im}^{-1} [\nabla_\eta \partial_{\tau} \uptheta ]^m_\ell dx ds' \notag \\
&+\int_0^s \int_{U(s')} \overline{C} \mu^{-\delta} w   (1+\beta) \mathscr{J}^{-\frac{1}{\alpha}}   [\text{Curl}_{\Lambda\mathscr{A}} \uptheta]^\ell_i  \Lambda_{im}^{-1} [\nabla_\eta \partial_{\tau} \uptheta ]^m_\ell dx ds' \notag \\
&+ \int_0^s \int_{U(s')} \overline{C} \mu^{-\delta} w   (1+\beta) \mathscr{J}^{-\frac{1}{\alpha}}   \mathscr{A}^j_l\uptheta^l,_m \mathscr{A}^k_\ell \uptheta^\ell,_j \partial_{\tau} \uptheta^m,_k   dx ds' \notag \\
&-\int_0^s \int_{U(s')}  \overline{C} \mu^{-\delta} w  (1+\beta)(\mathscr{J}^{-\frac{1}{\alpha}} -1) \partial_{\tau} \uptheta^k,_k  dx ds' \notag \\
&+\int_0^s \int_{\partial U(s')} \overline{C} \mu^{-\delta} w \Lambda_{ij} (1+\beta) ( \mathscr{A}_j^k \mathscr{J}^{-\frac{1}{\alpha}} -\delta_j^k ) \Lambda^{-1}_{i m} \partial_{\tau} \uptheta^m \mathbf{n}^k dS ds', \label{E:POSTINTEGRATEBYPARTS}
\end{align}
where $\mathbf{n}$ is the outward facing unit normal to $U(s')$. Using Lemma \ref{L:KEYLEMMA}, the Differentiation Formula for Moving Regions and the Fundamental Theorem of Calculus, the first integral on the right hand side of (\ref{E:POSTINTEGRATEBYPARTS}) gives a gradient energy contribution with some remainder terms
\begin{align}
&\int_{U(s)}   \frac{\overline{C} \mu^{-\delta} w}{2}  (1+\beta) \mathscr{J}^{-\frac{1}{\alpha}}\sum_{i,j=1}^3 d_id_j^{-1}([\mathscr{N}_\nu]^j_i )^2 (s,x)dx \notag \\
&- \int_{U(0)}   \frac{\overline{C} \mu^{-\delta} w}{2}   (1+\beta) \mathscr{J}^{-\frac{1}{\alpha}}\sum_{i,j=1}^3 d_id_j^{-1}([\mathscr{N}_\nu]^j_i)^2 (0,x) dx \notag \\
&+\int_0^{s} \int_{\partial U(s')} K \frac{\overline{C} \mu^{-\delta} w}{2}  (1+\beta) \mathscr{J}^{-\frac{1}{\alpha}}\sum_{i,j=1}^3 d_id_j^{-1}([\mathscr{N}_\nu]^j_i )^2  dS ds' \notag \\
&+\int_0^{s} \int_{ U(s')} \overline{C} \mu^{-\delta} w  (1+\beta) \mathscr{J}^{-\frac{1}{\alpha}} \mathcal T_{0,0} dx ds' \notag \\
&+\delta \int_0^{s} \int_{ U(s')} \overline{C} \mu^{-\delta-1} \mu_{\tau} w (1+\beta) \mathscr{J}^{-\frac{1}{\alpha}} \sum_{i,j=1}^3 d_i d_j^{-1}([\mathscr{N}_\nu]^j_i )^2 dx ds' \notag \\
&+\frac{1}{2 \alpha}\int_0^{s} \int_{ U(s')} \overline{C} \mu^{-\delta}  w (1+\beta) \mathscr{J}^{-\frac{1}{\alpha}-1} \mathscr{J}_{\tau} \sum_{i,j=1}^3 d_i d_j^{-1}([\mathscr{N}_\nu]^j_i )^2 dx ds' \notag \\
&+ \int_0^{s} \int_{ U(s')} \overline{C} \mu^{-\delta} w (1+\beta) \mathscr{J}^{-\frac{1}{\alpha}} \Lambda_{\ell j} [\nabla_\eta \uptheta]_i^j \Lambda_{im}^{-1} [\nabla_\eta \uptheta]_p^m [\nabla_\eta \partial_{s'} \uptheta]_\ell^p dx ds'. \label{E:ZEROORDERIEXPGAMMALEQ5OVER3}
\end{align}
The same tools, and in addition (\ref{E:JIDENTITYENERGY}): $1-\mathscr{J}^{-\frac1\alpha}=\frac{1}{\alpha}\text{Tr}[D\uptheta ] + O(|D\uptheta|^2)$, applied to the second to last integral on the right hand side of (\ref{E:POSTINTEGRATEBYPARTS}) gives a divergence energy contribution with some remainder terms
\begin{align}
&\frac{1}{\alpha} \int_0^s \int_{U(s')} \overline{C} \mu^{-\delta} w (1+\beta)\uptheta^\ell,_\ell \partial_{\tau} \uptheta^k,_k dx ds'+\int_0^s \int_{U(s')} \overline{C} \mu^{-\delta} w (1+\beta)O(|D\uptheta|^2) \partial_{\tau} \uptheta^k,_k dx ds' \notag \\
&=\frac{1}{2 \alpha} \int_0^s \int_{U(s')} \overline{C} w \frac{d}{d \tau} \left(\mu^{-\delta} (1+\beta) (\text{div}\, \uptheta)^2 \right) dx ds' +\frac{\delta}{2 \alpha} \int_0^s \int_{U(s')} \overline{C} w \mu^{-\delta-1} \mu_{\tau} (1+\beta) (\text{div}\, \uptheta)^2  dx ds' \notag \\
&+\int_0^s \int_{U(s')} \overline{C} \mu^{-\delta} w (1+\beta) O(|D\uptheta|^2) \partial_{\tau} \uptheta^k,_k dx ds' \notag \\
&=\int_{U(s)} \frac{\overline{C}}{2 \alpha} \left( w \mu^{-\delta} (1+\beta) (\text{div}\, \uptheta)^2 \right)(s,x) \, dx  - \int_{U(0)} \frac{\overline{C}}{2 \alpha} \left( w \mu^{-\delta} (1+\beta) (\text{div}\, \uptheta)^2 \right) (0,x) \, dx \notag \\
&+\int_0^s \int_{\partial U(s')} K \frac{1}{2 \alpha} \left( \overline{C} w \mu^{-\delta} (1+\beta) (\text{div}\, \uptheta)^2 \right) \, dS ds' +\frac{\delta}{2 \alpha} \int_0^s \int_{U(s')} \overline{C} w \mu^{-\delta-1} \mu_{\tau} (1+\beta) (\text{div}\, \uptheta)^2  dx ds' \notag \\
&+\int_0^s \int_{U(s')} \overline{C} \mu^{-\delta} w (1+\beta) O(|D\uptheta|^2) \partial_{\tau} \uptheta^k,_k dx ds'.
\end{align}
For the boundary term on the right hand side of (\ref{E:POSTINTEGRATEBYPARTS}), apply (\ref{E:AJIDENTITYENERGY}):
$$\mathscr{A}_j^k \mathscr{J}^{-\frac{1}{\alpha}} -\delta_j^k=-\mathscr{A}_{\ell}^k[D \uptheta]^{\ell}_j \mathscr{J}^{-\frac{1}{\alpha}}-\delta_j^k(\frac{1}{\alpha}\text{Tr}[D \uptheta ] + O(|D\uptheta|^2),$$
and then using $\mu^{-\delta}=\mu^{\sigma-\frac{3}{\alpha}}=\mu^{\frac{\sigma}{2}}\mu^{\frac{\sigma}{2}-\frac{3}{\alpha}} \lesssim \mu^{\frac{\sigma}{2}}\mu^{\frac{\sigma-3/\alpha}{2}} = \mu^{\frac{\sigma}{2}}\mu^{\frac{-\delta}{2}}$, we have that there exists $C_1>0$ such that
\begin{align}
&\left| \overline{C} \mu^{-\delta} w \Lambda_{ij} (1+\beta) ( \mathscr{A}_j^k \mathscr{J}^{-\frac{1}{\alpha}} -\delta_j^k ) \Lambda^{-1}_{i m} \partial_{\tau} \uptheta^m \mathbf{n}^k \right| \notag \\
& \leq C_1 \left(  \frac{w \mu^{\sigma}}{2} \left\langle \Lambda^{-1} \mathbf{V}, \mathbf{V} \right\rangle + \frac{\mu^{-\delta} w \overline{C}}{2} (1+\beta) \mathscr{J}^{-\frac{1}{\alpha}} \Big[\sum_{i,j=1}^3 d_i d_j^{-1} \left( [\mathscr{N}_\nu]^j_i \right)^2 \Big] \right),
\end{align}
where we also use the a priori assumptions (\ref{E:APRIORI}) and the boundedness of $\beta$. Therefore at this stage we take $K > C_1$.

For the curl term on the right hand side of (\ref{E:POSTINTEGRATEBYPARTS}), first write as a perfect time derivative and then integrate by parts in $\tau$
\begin{align}
&\int_0^s \int_{U(s')} \overline{C} \frac{d}{d \tau} \left[ \mu^{-\delta} w (1+\beta) \mathscr{J}^{-\frac{1}{\alpha}}   [\text{Curl}_{\Lambda\mathscr{A}} \uptheta]^\ell_i  \Lambda_{im}^{-1} [\nabla_\eta \uptheta ]^m_\ell \right] dx ds' \notag \\ 
&- \int_0^s \int_{U(s')} \overline{C} \frac{d}{d \tau} \left[ \mu^{-\delta} w  (1+\beta) \mathscr{J}^{-\frac{1}{\alpha}}   [\text{Curl}_{\Lambda\mathscr{A}} \uptheta]^\ell_i  \Lambda_{im}^{-1} \mathscr{A}^m_p \right] \uptheta^p,_\ell dx ds' \notag \\
&= \int_{U(s)} \overline{C} \left(\mu^{-\delta} w   (1+\beta) \mathscr{J}^{-\frac{1}{\alpha}}   [\text{Curl}_{\Lambda\mathscr{A}} \uptheta]^\ell_i  \Lambda_{im}^{-1} [\nabla_\eta \uptheta ]^m_\ell \right)(x,s)  dx \notag \\
&- \int_{U(0)} \overline{C} \left( \mu^{-\delta} w   (1+\beta) \mathscr{J}^{-\frac{1}{\alpha}}   [\text{Curl}_{\Lambda\mathscr{A}} \uptheta]^\ell_i  \Lambda_{im}^{-1} [\nabla_\eta \uptheta ]^m_\ell\right)(0,s)  dx \notag \\
&+\int_0^s \int_{\partial U(s')} K \overline{C}  \mu^{-\delta} w (1+\beta) \mathscr{J}^{-\frac{1}{\alpha}}   [\text{Curl}_{\Lambda\mathscr{A}} \uptheta]^\ell_i  \Lambda_{im}^{-1} [\nabla_\eta \uptheta ]^m_\ell  d S d s' \notag \\
&- \int_0^s \int_{U(s')} \overline{C} \frac{d}{d \tau} \left[ \mu^{-\delta} w (1+\beta) \mathscr{J}^{-\frac{1}{\alpha}}   [\text{Curl}_{\Lambda\mathscr{A}} \uptheta]^\ell_i  \Lambda_{im}^{-1} \mathscr{A}^m_p \right] \uptheta^p,_\ell dx ds' \notag \\
&=(i)+(ii)+(iii)+(iv). \label{E:CURLIBPTAUFINITEPROP}
\end{align}
Integral $(i)$ on the right hand side of (\ref{E:CURLIBPTAUFINITEPROP}) is estimated using Young's inequality which gives for some fixed $0 < \kappa_1 \ll 1$
\begin{align}
|(i)| & \leq \int_{U(s)} \left( \left| \mu^{-\tfrac{\delta}{2}} w^{\tfrac{1}{2}} (1+\beta)^{1/2} [\text{Curl}_{\Lambda \mathscr{A}} \uptheta ]_i^\ell \right| \left| \mu^{-\tfrac{\delta}{2}} w^{\tfrac{1}{2}} (1+\beta)^{1/2} \overline{C} \mathscr{J}^{-\frac{1}{\alpha}} \Lambda_{im}^{-1} [\nabla_{\eta} \uptheta]^m_{\ell} \right| \right)(x,s)dx \notag \\
& \lesssim \frac{1}{\kappa_1} \int_{U(s)} \left(\mu^{-\delta} w (1+\beta) \left| \text{Curl}_{\Lambda \mathscr{A}}  \uptheta \right|^2 \right)(x,s) \, dx + \kappa_1 \int_{U(s)} \left(\mu^{-\delta} w (1+\beta) \left| D \uptheta \right|^2 \right)(x,s) dx \notag \\
& \lesssim \frac{1}{\kappa_1} b_{\uptheta} (s) + \kappa_1 e(s). \label{E:BTHETATERMTOESTIMATE}
\end{align}
Then $\kappa_1 e(s)$ is absorbed by the left hand side of our energy inequality. Integral $(ii)$ is bounded by initial data $e(0)+b_{\uptheta}(0).$ The boundary integral $(iii)$ on the right hand side of (\ref{E:CURLIBPTAUFINITEPROP}) is also estimated using Young's inequality which gives for some fixed $0 < \kappa_2 \ll 1$
\begin{align}
|(iii)| & \lesssim \frac{K}{\kappa_2} \int_0^s \int_{\partial U(s')} \mu^{-\delta} w (1+\beta) \left| \text{Curl}_{\Lambda \mathscr{A}} \uptheta \right|^2 dS ds' \notag\\
&+\kappa_2 K \int_0^s \int_{\partial U(s')} w \frac{\overline{C}}{2} \mu^{-\delta} (1+\beta) \mathscr{J}^{-\frac{1}{\alpha}}\Big[\sum_{i,j=1}^3 d_id_j^{-1}\left( \left( P\:\nabla_\eta \uptheta\:P^{\top}\right)^j_i \right)^2 \Big] dS ds' \notag \\
& \lesssim K \int_0^s b_{\uptheta \partial}(s') ds' + \kappa_2 K \int_0^s e_\partial(s') ds'. \label{E:BTHETAPARTIALTOESTIMATE}
\end{align}
The term $\kappa_2 K \int_0^s e_\partial(s') ds'$ can then be absorbed by the positive boundary energy contribution from above. For $K \int_0^s b_{\uptheta \partial}(s') ds'$, we note we can recover this term as follows
\begin{align}
&\int_0^s \int_{U(s')} w (1+\beta) \frac{d}{d \tau} \left[ \mu^{-\delta} | \text{Curl}_{\Lambda \mathscr{A}} \uptheta |^2 \right] dx ds' \notag \\
&=\int_{U(s)} w \mu^{-\delta} (1+\beta) | \text{Curl}_{\Lambda \mathscr{A}} \uptheta |^2 dx - \int_{U(0)} w \mu^{-\delta} (1+\beta) | \text{Curl}_{\Lambda \mathscr{A}} \uptheta |^2 dx  \notag \\
&+K \int_0^s \int_{\partial U(s')} w \mu^{-\delta} (1+\beta) | \text{Curl}_{\Lambda \mathscr{A}} \uptheta |^2 dS ds' \notag \\
&=b_{\uptheta}(s)-b_{\uptheta}(0)+ K \int_0^s b_{\uptheta \partial}(s') ds'. \label{E:RECOVERBOUNDARYCURL}
\end{align}
On the other hand note for the left hand side of (\ref{E:RECOVERBOUNDARYCURL})
\begin{align}
&\int_0^s \int_{U(s')} w (1+\beta) \frac{d}{d \tau} \left[ \mu^{-\delta} | \text{Curl}_{\Lambda \mathscr{A}} \uptheta |^2 \right] dx ds' =-\delta \int_0^s \int_{U(s')} w \mu^{-\delta-1} \mu_{\tau} (1+\beta) | \text{Curl}_{\Lambda \mathscr{A}} \uptheta |^2 dx ds' \notag \\
&+ 2 \int_0^s \int_{U(s')} w \mu^{-\delta} (1+\beta) \text{Curl}_{\Lambda \mathscr{A}} \uptheta \cdot \left[ \frac{d}{d \tau} \left[ \text{Curl}_{\Lambda \mathscr{A}} \uptheta \right] \right] dx ds' \notag \\
&=-\delta \int_0^s \int_{U(s')} w \mu^{-\delta-1} \mu_{\tau} (1+\beta) | \text{Curl}_{\Lambda \mathscr{A}} \uptheta |^2 dx ds' \notag \\
&+ 2 \int_0^s \int_{U(s')} w \mu^{-\delta} (1+\beta) \text{Curl}_{\Lambda \mathscr{A}} \uptheta \cdot  \text{Curl}_{\partial_{\tau} (\Lambda \mathscr{A})} \uptheta dx ds' \notag \\
&+ 2 \int_0^s \int_{U(s')} w \mu^{-\delta}(1+\beta)  \text{Curl}_{\Lambda \mathscr{A}} \uptheta \cdot  \text{Curl}_{\Lambda \mathscr{A}} \mathbf{V} dx ds'.
\end{align}
The first two terms on the right hand side above are remainder terms and the last term is estimated  bounded by a remainder term and $\int_0^s b_{\mathbf{V}}(s') ds'$ contribution
\begin{equation}
\int_0^s \int_{U(s')} w \mu^{-\delta} (1+\beta) | \text{Curl}_{\Lambda \mathscr{A}} \uptheta |^2 dx ds' + \int_0^s \int_{U(s')} w \mu^{-\delta} (1+\beta) | \text{Curl}_{\Lambda \mathscr{A}} \mathbf{V} |^2 dx ds'.
\end{equation}
For integral $(iv)$ 
\begin{align}
&(iv)=- \int_0^s \int_{U(s')} \overline{C} \frac{d}{d \tau} \left[ \mu^{-\delta} w (1+\beta) \mathscr{J}^{-\frac{1}{\alpha}}   [\text{Curl}_{\Lambda \mathscr{A}} \uptheta]^\ell_i  \Lambda_{im}^{-1} \mathscr{A}^m_p \right] \uptheta^p,_\ell dx ds' \notag \\
&=\delta \int_0^s \int_{U(s')} \overline{C} \mu^{-\delta-1} \mu_{\tau} w (1+\beta) \mathscr{J}^{-\frac{1}{\alpha}} [\text{Curl}_{\Lambda \mathscr{A}} \uptheta]^\ell_i \Lambda_{im}^{-1} \mathscr{A}^m_p \uptheta^p,_\ell dx ds' \notag \\
&-\int_0^s \int_{U(s')} \overline{C} \mu^{-\delta} w  (1+\beta) \frac{d}{d \tau} \left[ \mathscr{J}^{-\frac{1}{\alpha}}  \Lambda_{im}^{-1} \mathscr{A}^m_p \right] [\text{Curl}_{\Lambda \mathscr{A}} \uptheta]^\ell_i \uptheta^p,_\ell dx ds' \notag \\
&-\int_0^s \int_{U(s')} \overline{C} \mu^{-\delta} w (1+\beta) \mathscr{J}^{-\frac{1}{\alpha}} \Lambda_{im}^{-1} \mathscr{A}^m_p [\text{Curl}_{\partial_{\tau}(\Lambda \mathscr{A})} \uptheta]^\ell_i \uptheta^p,_\ell dx ds' \notag \\
&-\int_0^s \int_{U(s')} \overline{C} \mu^{-\delta} w (1+\beta) \mathscr{J}^{-\frac{1}{\alpha}} [\text{Curl}_{\Lambda \mathscr{A}} \mathbf{V}]^\ell_i \Lambda_{im}^{-1} \mathscr{A}^m_p \uptheta^p,_\ell dx ds',
\end{align}
where the last term on the right hand side of above is estimated by
\begin{equation}\label{E:BVTOESTIMATE}
\int_0^s b_{\mathbf{V}} (s') ds' + \int_0^s e(s') ds'.
\end{equation}
All remainder integrals either appear as positive contributions on the left hand side of our inequality or they are bounded by
\begin{equation}
\int_0^s e(s') ds'.
\end{equation}
It remains to estimate the third integral in (\ref{E:POSTMULTIPLYTHETA}) which will contribute source terms
\begin{align}
&\int_0^s \int_{U(s')} \overline{C} \mu^{-\delta} \Lambda_{ik} (w \beta)_{,k} \Lambda^{-1}_{i m} \partial_{\tau} \uptheta^m dx ds' \notag \\
&=\int_0^s \int_{U(s')} ( -\overline{C} \mu^{-\delta} w \beta \Lambda_{ik} x_k + \overline{C} \mu^{-\delta} w \Lambda_{ik} \beta_{,k} ) \Lambda^{-1}_{i m} \partial_{\tau} \uptheta^m dx ds'. \label{E:SOURCETERMSFINITEPROP}
\end{align}
where we have used $w,_k=-x_k w,_k$. For the first term on the right hand side of (\ref{E:SOURCETERMSFINITEPROP}), first recall $y \in \mathbb{R}^3$ and $\tau \in [0,T]$ are fixed. Then for $x \in U(s')$, first note $w(x) |x|^2 \lesssim 1$ because $w$ is a Gaussian function. Then
\begin{align}
&\left| \int_0^s \int_{U(s')} - \overline{C} \mu^{-\delta} w \beta \Lambda_{ik} x_k  \Lambda^{-1}_{i m} \partial_{\tau} \uptheta^m dx ds' \right|  \lesssim \int_0^s \left| \int_{U(s')} \mu^{\sigma/2} \mu^{-\delta} w \beta   \Lambda_{ik} x_k \Lambda^{-1}_{i m} \mathbf{V}^m  dx \right|  ds' \notag \\
& \lesssim  \int_0^s \left( \int_{U(s')} \mu^{\sigma} w |\mathbf{V}|^2 dx ds' \right)^{1/2} \left( \int_{U(s')} \mu^{-2 \delta} w |x|^2 |\beta|^2 dx \right)^{1/2} ds' \notag \\
& \lesssim \int_0^s [e(s')]^{1/2}  \left( \int_{U(s')} \mu^{-2 \delta} |\beta|^2 dx \right)^{1/2} ds' \notag \\
& \lesssim \int_0^s e(s') ds' + \int_0^s  \int_{U(s')} \mu^{-2 \delta} |\beta|^2 dx ds' \notag \\
& \lesssim \int_0^s e(s') ds' + \int_0^s \int_{U(0)} \mu^{-2 \delta} |\beta|^2 dx ds' \notag \\
& \lesssim \int_0^s e(s') ds' + \int_{U(0)} |\beta|^2 dx \int_0^s \mu^{-2 \delta} ds' \notag \\
& \lesssim \int_0^s e(s') ds' + \int_{U(0)} |\beta|^2 dx
\end{align}
where we have used $U(s) \subseteq U(s')$ for $s' \leq s$ and the integrability of negative powers of $\mu$. A similar argument for the second term on the left hand side of (\ref{E:SOURCETERMSFINITEPROP}), this time simply using $w \lesssim 1$, gives the bound
\begin{equation}
\left| \int_0^s \int_{U(s')}  \overline{C} \mu^{-\delta} w \Lambda_{ik} \beta_{,k} \Lambda^{-1}_{i m} \partial_{\tau} \uptheta^m dx ds' \right| \lesssim \int_0^s e(s') ds' + \int_{U(0)} |D \beta|^2 dx .
\end{equation}
This concludes the proof (\ref{E:PREENERGYINEQUALITYFINITEPROP}).
\end{proof}
Next we prove the local curl estimates required to control the curl quantities on the right hand side of (\ref{E:PREENERGYINEQUALITYFINITEPROP}). Before proving the result, note that we say $C(s')$ is integrable in $s'$ if $\int_0^\infty C(s') ds' \lesssim 1$ and we say $C(s',\tau,|y|)$ is integrable in $s'$ if $\int_0^\infty C(s',\tau,|y|) ds' \lesssim C(\tau,|y|).$
\begin{proposition}\label{P:LOCALCURLESTIMATES}
Let $(\uptheta, {\bf V}):\Omega \rightarrow \mathbb R^3\times \mathbb R^3$ be a unique local solution to (\ref{E:THETAEQNLINEARENERGYFUNCTION})-(\ref{E:THETAICGAMMALEQ5OVER3}) on $[0,T^*]$ for $T^*>0$ fixed with $\text{supp} \,\uptheta_0 \subseteq B_1(\mathbf{0})$, $\text{supp}\,\mathbf{V}_0 \subseteq B_1(\mathbf{0})$ and assume $(\uptheta, {\bf V})$ satisfies the a priori assumptions (\ref{E:APRIORI}). Fix $N\geq 4$.  Suppose $\beta$ in (\ref{E:THETAEQNLINEARENERGYFUNCTION}) satisfies $\| \beta \|^2_{H^{N+1}(\mathbb{R}^3)} \leq \lambda$ and $\text{supp} \, \beta \subseteq B_1(\mathbf{0})$ where  $\lambda > 0$ is fixed.  Fix $y \in \mathbb{R}^3$ and $\tau \in [0,T^*]$. Take $K>0$ from (\ref{P:LOCALENERGYESTIMATE}). Let $s \in [0,\tau]$. Let $s \in [0,s']$. Then
\begin{align}
b_{\mathbf{V}} (s') & \lesssim  C(s') b_{\mathbf{V}}(0) + \overline{C}(s',\tau,|y|)e(0) + C(\tau,|y|) \sup_{0 \leq s'' \leq s'} e(s'') + C(s',\tau,|y|) \int_{U(s')} | D \beta |^2 dx, \label{E:CURLVFINITEPROPINEQ} \\
b_{\uptheta} (s) &\lesssim b_{\uptheta}(0)+b_{\mathbf{V}}(0) + C(\tau,|y|)e(0)+C(\tau,|y|) \int_0^s\sup_{0 \leq s'' \leq s'} e(s'') ds'+C(\tau,|y|) \int_{U(0)} | D \beta |^2 dx, \label{E:CURLTHETAFINITEPROPINEQ}
\end{align}
where $C(s')$ and $C(s',\tau,|y|)$ are integrable in $s'$.
\end{proposition}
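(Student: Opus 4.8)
The plan is to derive evolution equations for the curl quantities $[\text{Curl}_{\Lambda\mathscr{A}}\mathbf{V}]^i_j$ and $[\text{Curl}_{\Lambda\mathscr{A}}\uptheta]^i_j$ directly from the $\uptheta$-equation \eqref{E:THETAEQNLINEARENERGYFUNCTION}, then run a weighted $L^2$ energy estimate over the cones $U(s')$, exactly paralleling the structure of Proposition~\ref{P:LOCALENERGYESTIMATE}. First I would apply the anti-symmetric curl operator to \eqref{E:THETAEQNLINEARENERGYFUNCTION}: the key algebraic point is that the symmetric gradient/divergence contributions $\Lambda_{ij}(1+\beta)([\mathscr{A}_\eta]^k_j\mathscr{J}^{-1/\alpha}-\delta^k_j)$ and the linear term $\overline{C}\mu^{-\delta}\Lambda_{i\ell}\uptheta_\ell$ are killed (up to lower-order commutators and $\beta$-dependent remainders) when contracted against the anti-symmetrizing structure, since $\text{Curl}_{\Lambda\mathscr{A}}$ of a $\Lambda\mathscr{A}$-gradient vanishes. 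What survives is a forced transport-type equation for $\text{Curl}_{\Lambda\mathscr{A}}\mathbf{V}$ of the schematic form $\mu^\sigma\partial_\tau[\text{Curl}_{\Lambda\mathscr{A}}\mathbf{V}] + (\text{good coefficient})\,\text{Curl}_{\Lambda\mathscr{A}}\mathbf{V} = (\text{remainders}) + (\text{source from }\beta)$, where the source term $\overline{C}\mu^{-\delta}w^{-1}\Lambda_{ik}(w\beta)_{,k}$, after curling, produces exactly the $|D\beta|^2$ and $w|x|^2|\beta|^2\lesssim|\beta|^2$ contributions seen in the energy estimate.

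Next I would multiply the curl equation for $\mathbf{V}$ by $\mu^{-\delta}w(1+\beta)\,\text{Curl}_{\Lambda\mathscr{A}}\mathbf{V}$ and integrate over $C_{s'}$, using the Differentiation Formula for Moving Regions and the Fundamental Theorem of Calculus to extract $b_{\mathbf{V}}(s')$ as a perfect-time-derivative term, $b_{\mathbf{V}}(0)$ as initial data, a positive boundary contribution from the cone geometry (requiring $K$ large, already fixed in Proposition~\ref{P:LOCALENERGYESTIMATE}), and a collection of remainder integrals. The remainders involving $\partial_\tau(\Lambda\mathscr{A})$, $\mathscr{J}_\tau$, and the $O(|D\uptheta|^2)$ terms are controlled by $e(s')$ via the a priori assumptions \eqref{E:APRIORI} and Lemma~\ref{L:AASYMPTOTICS}-type bounds on $\mu$, $\mu_\tau/\mu$; the integrability of negative powers of $\mu$ converts $\int_0^{s'}(\cdots)\,ds''$ into a bounded multiple of $\sup_{s''}e(s'')$, which is the source of the $C(\tau,|y|)\sup e$ term. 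Terms where derivatives fall on $\uptheta$ rather than $\mathbf{V}$ are handled by the coercivity idea: write $\uptheta = \uptheta_0 + \int_0^{s'}\mathbf{V}$, which trades a $\uptheta$-curl for initial data plus a time integral of $\mathbf{V}$-curl, consistent with the $b_{\mathbf{V}}(0)$ and $e(0)$ terms on the right. For the $\uptheta$-estimate \eqref{E:CURLTHETAFINITEPROPINEQ} I would use the same fundamental-theorem-of-calculus representation directly: $\|\text{Curl}_{\Lambda\mathscr{A}}\uptheta\|$ is bounded by its initial value plus $\int_0^s\|\text{Curl}_{\Lambda\mathscr{A}}\mathbf{V}\|$ plus commutator terms from $\partial_\tau(\Lambda\mathscr{A})$, and then \eqref{E:CURLVFINITEPROPINEQ} is fed in to close.

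The main obstacle I anticipate is precisely the bookkeeping of the $y$-dependent (equivalently $x$-dependent) weights: because $w_{,k}/w = -x_k$ and we are on the cone $U(s')\subseteq U(0)$ with $|x|\le 1+K\tau$, every integration by parts that moves a derivative off $w$ generates a factor of $x$, and one must verify at each occurrence that the compensating Gaussian decay $w|x|^2\lesssim 1$ (or, for the $D\beta$ terms, merely $w\lesssim 1$ together with $\text{supp}\,\beta\subseteq B_1$) is available, so that no genuinely unbounded growth enters and the resulting coefficients $C(s',\tau,|y|)$ remain integrable in $s'$ in the sense defined just before the Proposition. A secondary subtlety is ensuring the $\beta$-commutator remainders — arising because $(1+\beta)$ is carried through the curl operation and does not commute with $\Lambda\mathscr{A}$-differentiation — are genuinely lower order, which follows from the smallness $\|\beta\|^2_{H^{N+1}}\le\lambda$ and $N\ge4$ together with Sobolev embedding, placing them either under $e(s')$ or under the $\int|D\beta|^2$ source. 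Once these weight and commutator points are checked, absorbing the small multiples of $e(s')$, $e_\partial(s')$, and $b_{\mathbf{V}\partial}$, $b_{\uptheta\partial}$ into the left-hand side and the positive boundary terms closes both inequalities.
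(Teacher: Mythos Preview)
Your overall plan---derive a curl evolution equation and then estimate in $L^2$---is in the right spirit, but the paper takes a genuinely different route and your proposal has a real gap at the algebraic step.

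\textbf{The different route.} The paper does \emph{not} run an energy estimate on the curl by multiplying by $\mu^{-\delta}w(1+\beta)\,\text{Curl}_{\Lambda\mathscr{A}}\mathbf{V}$ and integrating over the cone. Instead, Lemma~\ref{L:CURLEQUATIONDERIVATION} produces an explicit \emph{pointwise integral representation} for $\text{Curl}_{\Lambda\mathscr{A}}\mathbf{V}$ (and then for $\text{Curl}_{\Lambda\mathscr{A}}\uptheta$) by exploiting the integrating factor $\mu$: one obtains $\partial_\tau(\mu\,\text{Curl}_{\Lambda\mathscr{A}}\mathbf{V})=(\text{forcing})$, integrates once, and divides by $\mu$. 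The proof of the Proposition then simply takes the weighted norm $\|\cdot\|^2_{w(1+\beta),L^2(U(s'))}$ of each term in that formula. This buys two things your scheme does not: (i) no cone-boundary terms are ever generated, so there is no $b_{\mathbf{V}\partial}$ to absorb; and (ii) the decaying, $s'$-integrable coefficients $C(s')\sim\mu(0)^2/\mu(s')^2$ on the initial data come for free from the $\mu^{-1}$ in the formula, which is exactly what is needed when $\int_0^s b_{\mathbf{V}}(s')\,ds'$ is fed back into Proposition~\ref{P:LOCALENERGYESTIMATE}.

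\textbf{The gap.} Your claim that the pressure and linear terms are ``killed (up to lower-order commutators)'' by applying $\text{Curl}_{\Lambda\mathscr{A}}$ directly to \eqref{E:THETAEQNLINEARENERGYFUNCTION} is not correct as stated. The nonlinear term $\tfrac{1}{w}(w\Lambda_{ij}(1+\beta)(\mathscr{A}^k_j\mathscr{J}^{-1/\alpha}-\delta^k_j))_{,k}$ is \emph{not} a $\Lambda\mathscr{A}$-gradient of a scalar. The paper's key algebraic move (Appendix~\ref{A:CURLEQS}) is to first pass to the $\eta$-equation and multiply through by the specific weight $w^{1/(1+\alpha)}(1+\beta)^{-\alpha/(1+\alpha)}$; only then does the pressure term become exactly $\overline{C}\mu^{-\delta}(1+\alpha)\Lambda\nabla_\eta\big(w^{1/(1+\alpha)}(1+\beta)^{1/(1+\alpha)}\mathscr{J}^{-1/\alpha}\big)$, which genuinely vanishes under $\text{Curl}_{\Lambda\mathscr{A}}$. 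The same passage to $\eta$ is what makes the linear term $\Lambda\eta$ satisfy $\text{Curl}_{\Lambda\mathscr{A}}(\Lambda\eta)=0$ (by $\mathscr{A}[D\eta]=\mathbf{Id}$); the linear term $\Lambda\uptheta$ in your formulation does \emph{not} curl to zero. Without this multiplicative trick, the ``remainders'' you anticipate from curling the pressure are not lower order: they contain top-order $D\uptheta$ contributions multiplied by the $y$-factor from $w_{,k}/w=-y_k$, and cannot be absorbed into $e(s')$. You need this step before anything else in the argument can proceed.
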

\begin{proof}
\textit{Proof of} (\ref{E:CURLVFINITEPROPINEQ}). From our velocity curl equation derivation Lemma \ref{L:CURLEQUATIONDERIVATION} (\ref{E:CURLVFINAL}) we have the desirable form for our local curl  estimate
\begin{align}\label{E:CURLVLOCAL}
&\text{Curl}_{\Lambda\mathscr{A}}{\bf V}  = \frac{1}{1+\alpha}\Lambda\mathscr{A} x \times \mathbf{V} + \frac{\alpha}{(1+\alpha)(1+\beta)} \Lambda\mathscr{A} \nabla \beta \times \mathbf{V} \notag \\
&+  \frac{\mu(0) \text{Curl}_{\Lambda \mathscr{A}} ({\bf V}(0))}{\mu} - \frac{\mu(0) \Lambda\mathscr{A} x \times \mathbf{V}(0)}{(1+\alpha) \mu} - \frac{\alpha \mu(0) \Lambda\mathscr{A} \nabla \beta \times \mathbf{V}(0)}{(1+\alpha)(1+\beta) \mu} \notag \\
& + \frac{1}{\mu}\int_0^{s'} \mu [\partial_{\tau}, \text{Curl}_{\Lambda\mathscr{A}}] {\bf V} ds'' - \frac{1}{(1+\alpha) \mu} \int_0^{s'} \mu [\partial_{\tau},\Lambda \mathscr{A} x \times ] \mathbf{V} d s'' \notag \\
&- \frac{\alpha}{(1+\alpha)(1+\beta) \mu} \int_0^{s'} \mu [\partial_{\tau},\Lambda \mathscr{A} \nabla \beta \times ] \mathbf{V} d s'' \notag \\
& - \frac{2}{\mu} \int_0^{s'} \mu \, \text{Curl}_{\Lambda\mathscr{A}}(\Gamma^\ast{\bf V}) ds'' + \frac{2}{(1+\alpha) \mu} \int_0^{s'} \mu \, \Lambda\mathscr{A} x \times (\Gamma^\ast \mathbf{V}) d s'' \notag \\
&+ \frac{2 \alpha}{(1+\alpha)(1+\beta) \mu} \int_0^{s'} \mu \, \Lambda\mathscr{A} \nabla \beta \times (\Gamma^\ast \mathbf{V}) d s'' \notag \\
&+\frac{\overline{C}}{(1+\alpha) \mu}\int_0^{s'} \mu^{1-\delta-\sigma} \Lambda x \times  \Lambda \uptheta d s'' -\frac{\overline{C}}{(1+\alpha) \mu} \int_0^{s'} \mu^{1-\delta-\sigma} \Lambda \mathscr{A}[D \uptheta]x \times \Lambda \eta d s'' \notag \\
&+\frac{\alpha \overline{C}}{(1+\alpha)(1+\beta) \mu}\int_0^{s'} \mu^{1-\delta-\sigma} \Lambda \mathscr{A} \nabla \beta \times \Lambda \uptheta d s'' -\frac{\alpha \overline{C}}{(1+\alpha)(1+\beta) \mu}\int_0^{s'} \mu^{1-\delta-\sigma} \Lambda \mathscr{A} \nabla \beta \times \Lambda x d s''
\end{align}
For the purpose of our local curl estimates define the $\| \cdot \|^2_{(w (1+\beta),L^2(U(s))}$ norm as follows
\begin{equation}
\| f \|^2_{w(1+\beta),L^2(U(s))}:=\int_{U(s)} \left(w (1+\beta) |f|^2 \right)(s,x) \, dx
\end{equation}
We take the $ \| \cdot \|^2_{w(1+\beta),L^2(U(s'))}$ norm, where $s' \in [0,s]$ , of (\ref{E:CURLVLOCAL}), multiply by $[\mu(s')]^{-\delta}$ and estimate the right hand side. As can be seen from (\ref{E:CURLVLOCAL}), many terms and hence estimates are similar. Therefore we give the key estimates below and remark similar arguments will hold for the other terms.

For the first term on the right hand side of (\ref{E:CURLVLOCAL}), $\frac{1}{1+\alpha}\Lambda\mathscr{A} x   \times \mathbf{V}$, first recall $y \in \mathbb{R}^3$ and $\tau \in [0,T]$ are fixed. Then for $x \in U(s')$ where $s' \in [0,s]$, first note
\begin{equation}\label{E:XCONTROL}
|x|^2 \lesssim |x-y|^2 + |y|^2 \leq K^2(\tau-s')^2+|y|^2 \leq K^2 \tau^2 + |y|^2.
\end{equation}
Then using the boundedness of $\beta$
\begin{equation}
[\mu(s')]^{-\delta} \| \frac{1}{1+\alpha}\Lambda\mathscr{A} x   \times \mathbf{V} \|^2_{w(1+\beta),L^2(U(s'))} \lesssim C(\tau,|y|) e(s').
\end{equation}
For the sixth term on the right hand side of (\ref{E:CURLVLOCAL}), $- \frac{1}{(1+\alpha) \mu} \int_0^{s'} \mu [\partial_{\tau},\Lambda \mathscr{A} x \times ] \mathbf{V} d s''$, 
\begin{equation}
[\partial_{\tau}, \text{Curl}_{\Lambda\mathscr{A}}] {\bf V}_j^k  =\partial_{\tau} \left(\Lambda_{jm}\mathscr{A}^s_m\right) {\bf V},_s^k - \partial_{\tau} \left(\Lambda_{km}\mathscr{A}^s_m\right) {\bf V},_s^j.
\end{equation}
As the other term can be estimated in the same way, we restrict our focus to the first term only
\begin{equation}\label{E:CURLERROREXPANSION}
\partial_{\tau} \left(\Lambda_{jm}\mathscr{A}^s_m\right) {\bf V},_s^k = \left(\partial_{\tau} \Lambda_{jm} \mathscr{A}^s_m + \Lambda_{jm}\partial_{\tau} \mathscr{A}^s_m \right){\bf V},_s^k = \partial_{\tau} \Lambda_{jm} \mathscr{A}^s_m {\bf V},_s^k +  \Lambda_{jm} \partial_{\tau} \mathscr{A}^s_m {\bf V},_s^k.
\end{equation}
Schematically consider the right hand side of (\ref{E:CURLERROREXPANSION})
\begin{equation}\label{E:CURLERROR1FINITEPROP}
\partial_{\tau} \Lambda  D\eta \,  D{\bf V} + \Lambda D{\bf V} \,  D{\bf V}
\end{equation}
For the first term in (\ref{E:CURLERROR1FINITEPROP}), one must first integrate by parts in $s''$
\begin{align}
\frac1\mu\int_0^{s'} \mu \partial_{\tau} \Lambda  D\eta \,  D{\bf V} d s'' = & \frac1\mu\left(\mu \partial_{\tau} \Lambda  D\eta \, D \uptheta\right)\big|^{s'}_0 \label{tau2}\\ 
&  - \frac1\mu\int_0^{s'} \mu(\frac{\mu_\tau}\mu\partial_{\tau} \Lambda D\eta+\partial_{\tau \tau} \Lambda  D\eta+\partial_{\tau} \Lambda D{\bf V}) \,  D \uptheta\, ds''. \notag
\end{align}
Then
\begin{equation}
\| \frac1\mu\left(\mu \partial_{\tau} \Lambda  D\eta \, D \uptheta\right)\big|^{s'}_0 \|^2_{w(1+\beta),L^2(U(s'))} \lesssim e(s')+ C(s')e(0),
\end{equation}
where we include $1+\beta$ in our energy here, and $C(s')$ is integrable in $s'$. Next
\begin{align}
& \| \frac1\mu \int_0^{s'} \mu_{\tau} \partial_{\tau} \Lambda D\eta D \uptheta d s'' \|^2_{w(1+\beta),L^2(U(s'))} \notag \\
& \lesssim \frac{1}{[\mu(s')]^2} \sup_{0 \leq s'' \leq s'}  \int_{U(s')} w(1+\beta) |D \uptheta|^2 \left| \int_0^{s'} \mu_{\tau} \partial_{\tau} \Lambda D \eta d s'' \right|^2 dx \notag \\
& \lesssim \frac{1}{[\mu(s')]^2} \sup_{0 \leq s'' \leq s'} \int_{U(s')} w (1+\beta) |D \uptheta |^2 dx \left| \int_0^{s'} \mu_{\tau} d s'' \right|^2 \notag \\
& \lesssim \sup_{0 \leq s'' \leq s'} \int_{U(s')} w(1+\beta) |D \uptheta |^2 dx \notag \\
& \lesssim \sup_{0 \leq s'' \leq s'} \int_{U(s'')} w(1+\beta) |D \uptheta |^2 dx \notag \\
& \lesssim \sup_{0 \leq s'' \leq s'} e(s'').
\end{align}
Similar arguments apply to the remaining terms in (\ref{tau2}). For the second term in (\ref{E:CURLERROR1FINITEPROP}), we also first integrate by parts in $s''$
\begin{align}
\frac1\mu\int_0^{s'} \mu \Lambda D{\bf V} \,  D{\bf V} d s'' = & \frac1\mu\left(\mu  \Lambda  D{\bf V} \, D \uptheta\right)\big|^{s'}_0 \label{tau3}\\ 
&  - \frac1\mu\int_0^{s'} \mu(\frac{\mu_{\tau}}\mu \Lambda D{\bf V}+\partial_{\tau} \Lambda  D{\bf V}+ \Lambda D{\bf V}_{\tau}) \,  D \uptheta\, ds''. \notag 
\end{align}
Then, noting our a priori assumptions (\ref{E:APRIORI}) used to control $D{\bf V}_{\tau}$, a similar argument to the first term in (\ref{E:CURLERROR1FINITEPROP}) give the same bounds here. Therefore 
\begin{equation}
[\mu(s')]^{-\delta} \|\frac{1}{(1+\alpha) \mu} \int_0^{s'} \mu [\partial_{\tau},\Lambda \mathscr{A} x \times ] \mathbf{V} d s'' \|^2_{w(1+\beta),L^2(U(s'))} \lesssim \sup_{0 \leq s'' \leq s'} e(s'')+C(s')e(0).
\end{equation}
For the last term on the right hand side of (\ref{E:CURLVLOCAL}), $-\frac{\alpha \overline{C}}{(1+\alpha)(1+\beta) \mu}\int_0^{s'} \mu^{1-\delta-\sigma} \Lambda \mathscr{A} \nabla \beta \times \Lambda x d s''$, first note using Lemma \ref{L:USEFULTAULEMMAGAMMALEQ5OVER3}
\begin{align}\label{E:INTEGRABLEINSPRIME}
\frac{1}{\mu^2} | \int_0^{s'} \mu^{1-\delta-\sigma} d s''|^2 &\lesssim e^{-2 \mu_1 s'} \Big| \int_0^{s'} e^{(1-\delta-\sigma)\mu_1 s''} d s'' \Big|^2 \notag \\
&\lesssim e^{-2 \mu_1 s' }(e^{2(1-\delta-\sigma)\mu_1 s'}+1) \notag \\
& \lesssim e^{-2 \mu_0 s'},
\end{align}
which is integrable in $s'$. Then with this and using that $U(s') \subset U(s)$ for $s \leq s'$ as well as the boundedness of $\frac{1}{1+\beta}$ and (\ref{E:XCONTROL})
 \begin{align}
&[\mu(s')]^{-\delta} \| \frac{\alpha \overline{C}}{(1+\alpha)(1+\beta) \mu}\int_0^{s'} \mu^{1-\delta-\sigma} \Lambda \mathscr{A} \nabla \beta \times \Lambda x d s'' \|^2_{w(1+\beta),L^2(U(s'))} \notag \\
&\lesssim C(s',\tau,|y|) \int_{U(s')} | D \beta |^2 dx \notag \\
&\lesssim  C(s',\tau,|y|) \int_{U(0)} | D \beta |^2 dx.
\end{align}
\textit{Proof of} (\ref{E:CURLTHETAFINITEPROPINEQ}). From our perturbation curl equation derivation Lemma \ref{L:CURLEQUATIONDERIVATION} (\ref{E:CURLTHETAFINAL}) we have the desirable form for our local curl estimate
\begin{align}\label{E:CURLTHETALOCAL}
&\text{ Curl}_{\Lambda\mathscr{A}}{\bf \uptheta}  = \text{ Curl}_{\Lambda \mathscr{A}}([\uptheta(0)]) \notag \\
&+\mu(0) \text{ Curl}_{\Lambda \mathscr{A}} ({\bf V}(0)) \int_0^s \frac{1}{\mu(s')} d s' - \frac{\mu(0) \Lambda\mathscr{A} x \times \mathbf{V}(0)}{1+\alpha} \int_0^s \frac{1}{\mu(s')} d s' \notag \\
&- \frac{\alpha \mu(0) \Lambda\mathscr{A} \nabla \beta \times \mathbf{V}(0)}{(1+\alpha)(1+\beta)}\int_0^{s} \frac{1}{\mu(s')} d s' + \int_0^s  [\partial_{\tau}, \text{ Curl}_{\Lambda\mathscr{A}}] {\uptheta} ds'  \notag \\
&+ \frac{1}{1+\alpha} \int_0^s \Lambda\mathscr{A} x \times \mathbf{V} ds' + \frac{\alpha}{(1+\alpha)(1+\beta)} \int_0^s \Lambda\mathscr{A} \nabla \beta \times \mathbf{V} ds' \notag \\
&+\int_0^s \frac{1}{\mu(s')} \int_0^{s'} \mu(s'') [\partial_{\tau}, \text{ Curl}_{\Lambda\mathscr{A}}] {\bf V} ds'' d s' -\frac{1}{1+\alpha} \int_0^{s} \frac{1}{\mu(s')} \int_0^{s'} \mu(s '') [\partial_{\tau},\Lambda \mathscr{A} x \times ] \mathbf{V} ds '' d s ' \notag \\
&- \frac{\alpha}{(1+\alpha)(1+\beta)} \int_0^{s} \frac{1}{\mu(s')} \int_0^{s'} \mu(s '') [\partial_{\tau},\Lambda \mathscr{A} \nabla \beta \times ] \mathbf{V} d s'' \notag \\
& - \int_0^s \frac{2}{\mu(s')} \int_0^{s '} \mu(s'') \, \text{ Curl}_{\Lambda\mathscr{A}}(\Gamma^\ast{\bf V}) ds'' d s' + \frac{2}{1+\alpha} \int_0^{s} \frac{1}{\mu(s')} \int_0^{s '} \mu(s'') \, \Lambda\mathscr{A} x \times (\Gamma^\ast \mathbf{V}) ds '' d s ' \notag \\
&+ \frac{2 \alpha}{(1+\alpha)(1+\beta)} \int_0^{s} \frac{1}{\mu(s')} \int_0^{s '} \mu(s'') \, \Lambda\mathscr{A} \nabla \beta \times (\Gamma^\ast \mathbf{V}) ds '' d s ' \notag \\
& +\frac{\overline{C}}{1+\alpha} \int_0^s \frac{1}{\mu(s')} \int_0^{s'} \mu(s'')^{1-\delta-\sigma}  \Lambda x \times  \Lambda \uptheta d s '' d s ' \notag \\
& -\frac{\overline{C}}{1+\alpha} \int_0^s \frac{1}{\mu(s')} \int_0^{s'} \mu(s '')^{1-\delta-\sigma} \Lambda \mathscr{A}[D \uptheta]x \times \Lambda \eta d s '' d s ' \notag \\
&+\frac{\alpha \overline{C}}{(1+\alpha)(1+\beta)} \int_0^s \frac{1}{\mu} \int_0^{s'} \mu^{1-\delta-\sigma} \Lambda \mathscr{A} \nabla \beta \times \Lambda \uptheta d s'' ds' \notag \\
&-\frac{\alpha \overline{C}}{(1+\alpha)(1+\beta)} \int_0^{s} \frac{1}{\mu} \int_0^{s'} \mu^{1-\delta-\sigma} \Lambda \mathscr{A} \nabla \beta \times \Lambda x  d s'' ds'.
\end{align}
We take the $ \| \cdot \|^2_{w(1+\beta),L^2(U(s))}$ norm of (\ref{E:CURLTHETALOCAL}), multiply by $[\mu(s)]^{-\delta}$ and estimate the right hand side. As can be seen from (\ref{E:CURLTHETALOCAL}), many terms and hence estimates are similar. Therefore we give the key estimates below and remark similar arguments will hold for the other terms.

For the fifth term on the right hand side of (\ref{E:CURLTHETALOCAL}), $\int_0^s  [\partial_{\tau}, \text{ Curl}_{\Lambda\mathscr{A}}] {\uptheta} ds'$, noting $U(s) \subset U(s')$ for $s' \leq s$,
\begin{align}
& [\mu(s)]^{-\delta}  \| \int_0^s  [\partial_{\tau}, \text{ Curl}_{\Lambda\mathscr{A}}] {\uptheta} ds' \|^2_{w(1+\beta),L^2(U(s))} \notag \\
&\lesssim [\mu(s)]^{-\delta} \int_{U(s)} w(1+\beta) \left| \int_0^s  [\partial_{\tau}, \text{ Curl}_{\Lambda\mathscr{A}}] {\uptheta} ds' \right|^2 dx \notag \\
&\lesssim [\mu(s)]^{-\delta} \int_{U(s)} w(1+\beta) (\int_0^s 1 ds')(\int_0^s \left| [\partial_{\tau}, \text{ Curl}_{\Lambda\mathscr{A}}] {\uptheta} \right|^2 ds') dx \notag ]\\
&\lesssim \int_0^s \int_{U(s)} w(1+\beta) \left| [\partial_{\tau}, \text{ Curl}_{\Lambda\mathscr{A}}] {\uptheta}  \right|^2 dx ds' \notag \\
&\lesssim \int_0^s \int_{U(s')} w(1+\beta) \left| [\partial_{\tau}, \text{ Curl}_{\Lambda\mathscr{A}}] {\uptheta}  \right|^2 dx ds' \notag \\
&\lesssim \int_0^s e(s') ds'.
\end{align}
For the eighth term on the right hand side of (\ref{E:CURLTHETALOCAL}), $\int_0^s \frac{1}{\mu(s')} \int_0^{s'} \mu(s'') [\partial_{\tau}, \text{ Curl}_{\Lambda\mathscr{A}}] {\bf V} ds'' d s'$, first note
\begin{align}
& \| \int_0^s \frac{1}{\mu(s')} \int_0^{s'} \mu(s'') [\partial_{\tau}, \text{ Curl}_{\Lambda\mathscr{A}}] {\bf V} ds'' d s' \|^2_{w(1+\beta),L^2(U(s))} \notag  \\
&= \int_{U(s)}  w (1+\beta) \left| \int_0^s \frac{1}{[\mu(s')]^{1/2}} \frac{1}{[\mu(s')]^{1/2}}  \int_0^{s'} \mu(s'') [\partial_{\tau}, \text{ Curl}_{\Lambda\mathscr{A}}] {\bf V}  ds'' d s' \right|^2 dx \notag \\
&\lesssim \int_{U(s)} w (1+\beta) [ (\int_0^s \frac{1}{\mu(s')} d s') \int_0^s \frac{1}{\mu(s')} (\int_0^{s'} \mu(s'') [\partial_{\tau}, \text{ Curl}_{\Lambda\mathscr{A}}] {\bf V}  ds'')^2 d s'] dx \notag \\
& \lesssim \int_0^s \int_{U(s)} w (1+\beta) \frac{1}{\mu(s')} (\int_0^{s'} \mu(s'') [\partial_{\tau}, \text{ Curl}_{\Lambda\mathscr{A}}] {\bf V}  ds'')^2  dx ds' \notag \\
& \lesssim \int_0^s \int_{U(s')} w (1+\beta) \frac{1}{\mu(s')}  (\int_0^{s'} \mu(s'') [\partial_{\tau}, \text{ Curl}_{\Lambda\mathscr{A}}] {\bf V}  ds'')^2  dx ds'
\end{align}
where we note $U(s) \subset U(s')$ for $s' \leq s$. Hence applying the argument above used for the sixth term on the right hand side of (\ref{E:CURLVLOCAL})
\begin{align}
& [\mu(s)]^{-\delta} \| \int_0^s \frac{1}{\mu(s')} \int_0^{s'} \mu(s'') [\partial_{\tau}, \text{ Curl}_{\Lambda\mathscr{A}}] {\bf V} ds'' d s' \|^2_{w(1+\beta),L^2(U(s))} \notag \\
& \lesssim \int_0^s C(s') \sup_{0 \leq s'' \leq s'}e(s'')+C(s')(s')e(0) ds' \notag \\
&\lesssim e(0)+\int_0^s  \sup_{0 \leq s'' \leq s'} e(s'') ds' .
\end{align}
Finally, using (\ref{E:INTEGRABLEINSPRIME}), for the last term on the right hand side of (\ref{E:CURLTHETALOCAL}) we have
\begin{align}
&[\mu(s)]^{-\delta} \| \frac{\alpha \overline{C}}{(1+\alpha)(1+\beta)} \int_0^{s} \frac{1}{\mu} \int_0^{s'} \mu^{1-\delta-\sigma} \Lambda \mathscr{A} \nabla \beta \times \Lambda x  d s'' ds' \|^2_{w(1+\beta),L^2(U(s))} \notag \\
& \lesssim C(\tau,|y|) \int_{U(s)} | D \beta |^2 dx \notag \\
& \lesssim C(\tau,|y|)  \int_{U(0)} | D \beta |^2 dx.
\end{align} 
This concludes the proof of our local curl estimate Proposition \ref{P:LOCALCURLESTIMATES}. \end{proof}

We are now ready to prove our Finite Propagation Theorem \ref{T:FINITEPROPAGATIONTHEOREM}.
\begin{proof}[Proof of Theorem \ref{T:FINITEPROPAGATIONTHEOREM}.] Combining the results of the energy estimate and curl estimate Propositions above, namely the inequalities  (\ref{E:PREENERGYINEQUALITYFINITEPROP}), (\ref{E:CURLVFINITEPROPINEQ}) and (\ref{E:CURLTHETAFINITEPROPINEQ}), we have established the following energy inequality
\begin{align}
& e(s) \lesssim C(\tau,|y|)e(0)+b_{\uptheta}(0) + b_{\mathbf{V}}(0)+ \int_{U(0)}  |\beta|^2 dx \notag \\
& \qquad \qquad + C(\tau,|y|) \int_{U(0)} |D\beta|^2 dx  + C(\tau,|y|) \int_0^s \sup_{0 \leq s'' \leq s'} \{ e(s'') \}  ds'. \label{E:ENERGYINEQUALITYFINITEPROP1}
\end{align}
Then taking the supremum over $0 \leq s' \leq s$,
\begin{align}
& \sup_{0 \leq s' \leq s} \{ e(s') \}  \lesssim C(\tau,|y|)e(0)+b_{\uptheta}(0) + b_{\mathbf{V}}(0)+ \int_{U(0)}  |\beta|^2 dx  \notag \\ 
& \qquad \qquad + C(\tau,|y|) \int_{U(0)} |D\beta|^2 dx  + C(\tau,|y|) \int_0^s \sup_{0 \leq s'' \leq s'} \{ e(s'') \}  ds'. \label{E:PREENERGYINEQUALITYFINITEPROP2}
\end{align}
Since $C(\tau,|y|)e(0)+b_{\uptheta}(0) + b_{\mathbf{V}}(0)+ \int_{U(0)}  |\beta|^2 + |D\beta|^2 dx$ is constant in $s$, applying Gronwall's inequality, we have for some $C^*>0$,
\begin{align}
0 \leq \sup_{0 \leq s' \leq s} \{ e(s') \}) &\leq C^* \left[C(\tau,|y|)e(0)+b_{\uptheta}(0) + b_{\mathbf{V}}(0)+  \int_{U(0)}  |\beta|^2 dx \right. \notag \\
&\qquad \left.+ C(\tau,|y|) \int_{U(0)} |D\beta|^2 dx \right] \exp\left[\int_0^s C^* ds' \right].
\end{align}
Hence
\begin{align}
0 \leq \sup_{0 \leq s' \leq s} \{ e(s') \}) & \leq C^* \left[C(\tau,|y|)e(0)+b_{\uptheta}(0) + b_{\mathbf{V}}(0)+  \int_{U(0)}  |\beta|^2 dx \right. \notag \\
&\qquad \left. + C(\tau,|y|) \int_{U(0)} |D\beta|^2 dx \right] e^{C^* s}.
\end{align}
If $|y| > 1+K\tau$, then $|x| > 1$ for $x \in U(0)$. Thus 
\begin{equation}
e(0)=b_{\uptheta}(0)=b_{\mathbf{V}}(0)=0 \text { since } \text{supp} \,\uptheta_0 \subseteq B_1(\mathbf{0}), \text{supp}\,\mathbf{V}_0 \subseteq B_1(\mathbf{0}).
\end{equation}
Furthermore
\begin{equation}
\int_{U(0)}  |\beta|^2 dx = \int_{U(0)} |D\beta|^2 dx=0 \text{ since } \text{supp}\,\beta \subseteq B_1(\mathbf{0}).
\end{equation}
Hence $e(s)=0$ for $|y|>1+K\tau$. Thus $\uptheta(s,y)=\mathbf{V}(s,y)=0$ for $|y| > 1 +K \tau$. As $s \in [0,\tau]$ is arbitrary,
\begin{equation}
(\uptheta(\tau,y),\mathbf{V}(\tau,y))=(0,0),
\end{equation}
for $\tau \in [0,T]$ and $|y| > 1+K\tau$.
\end{proof}

\section{High Order Energy Estimates}\label{S:ENERGYESTIMATES}
With our finite propagation theorem in hand, we are now ready to proceed with high order energy estimates. To this end, we first introduce the two energy based high order quantities which arise directly from the problem. Denoting the usual dot product on $\mathbb{R}^3$ by $\langle \cdot , \cdot \rangle$, introduce the high-order energy functional
\begin{align}
& \mathcal{E}^N(\tau)= \frac12 \sum_{|\nu|\le N}
\int_{\mathbb{R}^3} \Big[\mu \left\langle \Lambda^{-1} \partial^\nu\mathbf{V},\,\partial^\nu \mathbf{V}\right\rangle +\overline{C} \mu^{-\delta} \left\langle\Lambda^{-1}\partial^\nu\mathbf{\uptheta},\,\partial^\nu\mathbf{\uptheta}\right\rangle\Big] 
\notag \\
& \ \ \ \  \ \ \ \ \ \ \ \ \ \ \ \ \ \ \ \ \ \ \ \ +\overline{C} \mu^{-\delta}(1+\beta)\mathscr{J}^{-\frac1\alpha}\Big[ \sum_{i,j=1}^3 d_id_j^{-1}([\mathscr{N}_{\nu}]^j_{i})^2 + \tfrac1\alpha\left(\text{div}_\eta\partial^\nu \uptheta\right)^2\Big]  \,dy, \label{E:EDEFGAMMAGREATER5OVER3}
\end{align}
and the dissipation functional
\begin{align}
&\mathcal{D}^N (\mathbf{\uptheta})=\overline{C} \frac{\delta}{2}\mu^{-\delta}\frac{\mu_\tau}{\mu} \int_{\mathbb{R}^3} \sum_{|\nu|\le N} |\partial^\nu \uptheta|^2 + (1+\beta)\mathscr{J}^{-\frac1\alpha} \Big[ \sum_{i,j=1}^3 d_id_j^{-1}([\mathscr{N}_{\nu}]^j_{i})^2 + \tfrac1\alpha\left(\text{div}_\eta\partial^\nu \uptheta\right)^2\Big]  \Big)\,dy.
\end{align}
We also introduce a similar term to $\mathcal{E}^N$ which does not include top order quantities but will be controlled through our coercivity Lemma \ref{L:COERCIVITY}
\begin{align}
& \mathcal{C}^{N-1}(\tau) = \frac12\sum_{|\nu|\le N-1}     
\int_\Omega \, \overline{C} \left\langle\Lambda^{-1}\partial^\nu \uptheta,\,\partial^\nu \uptheta \right\rangle \notag \\
& \ \ \ \  \ \ \ \ \ \ \ \ \ \ \ \ \ \ \ \ \ \ \ \ +\overline{C}(1+\beta)\mathscr{J}^{-\frac1\alpha}\Big[ \sum_{i,j=1}^3 d_id_j^{-1}\left((\mathscr{N}_{\nu})^j_{i}\right)^2 + \tfrac1\alpha\left(\text{div}_\eta\partial^\nu \uptheta\right)^2\Big] \,dy. \label{E:CDEFGAMMAGREATER5OVER3}
\end{align}

\begin{remark}
Our coercivity Lemma \ref{L:COERCIVITY} given in Appendix \ref{A:COERCIVITY} allows us to control terms without time weights with negative powers using our norm $\mathcal{S}^N(\tau)$ and initial data $\mathcal{S}^N(0)$. Such terms do not appear immediately from our equation (\ref{E:THETAEQNLINEARENERGYFUNCTION}). Notably, Lemma \ref{L:COERCIVITY} will let us include $\mathcal{C}^{N-1}$ in our energy inequality. However we cannot use Lemma \ref{L:COERCIVITY} to control top order quantities since that would require control of $N+1$ derivatives of $\mathbf{V}$ which we do not have. This is why we have the particular structure of $\mathcal{S}^N$ where we separate top order terms from lower order terms, and secondly why we only include $N-1$ derivatives in $\mathcal{C}^{N-1}$. 
\end{remark}

Finally, before we prove our main energy inequality, we give the norm-energy equivalence as follows.
\begin{lemma}\label{L:NORMENERGYGAMMAGREATER5OVER3} 
Let $(\uptheta, {\bf V}):\Omega \rightarrow \mathbb R^3\times \mathbb R^3$ be a unique local solution to (\ref{E:THETAEQNLINEARENERGYFUNCTION})-(\ref{E:THETAICGAMMALEQ5OVER3}) on $[0,T^*]$ for $T^*>0$ fixed with $\text{supp} \,\uptheta_0 \subseteq B_1(\mathbf{0})$, $\text{supp}\,\mathbf{V}_0 \subseteq B_1(\mathbf{0})$ and assume $(\uptheta, {\bf V})$ satisfies the a priori assumptions (\ref{E:APRIORI}). Fix $N\geq 4$.  Suppose $\beta$ in (\ref{E:THETAEQNLINEARENERGYFUNCTION}) satisfies $\| \beta \|^2_{H^{N+1}(\mathbb{R}^3)} \leq \lambda$ and $\text{supp} \, \beta \subseteq B_1(\mathbf{0})$ where  $\lambda > 0$ is fixed.  Then there are constants $C_1,C_2>0$ so that
\begin{align}\label{E:NORMENERGYEQUIVALENCE}
C_1\mathcal{S}^N (\tau) \le \sup_{0\le\tau'\le\tau}\{\mathcal{E}^N(\tau')+\mathcal{C}^{N-1}(\tau')\} \le C_2(\mathcal{S}^N(\tau)+\mathcal{S}^N(0)).
\end{align} 
\end{lemma}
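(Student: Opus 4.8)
The plan is to prove the two-sided bound \eqref{E:NORMENERGYEQUIVALENCE} term by term, using the a priori assumptions \eqref{E:APRIORI} to render all the $\mathscr{A}$-, $\mathscr{J}$-, and $D\uptheta$-dependent factors comparable to constants, and the smallness of $\beta$ (via the Sobolev embedding $H^{N+1}(\mathbb{R}^3)\hookrightarrow L^\infty$, valid since $N\geq 4$) to make $1+\beta$ comparable to $1$. First I would record the elementary matrix facts: since $\Lambda\in\text{SL}(3)$ is symmetric positive definite with eigenvalues $d_i>0$ depending only on the fixed affine motion, both $\Lambda$ and $\Lambda^{-1}$ are bounded above and below by fixed positive constants, so $\langle\Lambda^{-1}X,X\rangle\simeq|X|^2$ for any vector $X$; likewise $d_id_j^{-1}\simeq 1$. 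With $P\in\text{SO}(3)$ orthogonal, $|\mathscr N_\nu|^2=|P\nabla_\eta\partial^\nu\uptheta\,P^\top|^2=|\nabla_\eta\partial^\nu\uptheta|^2$, and by \eqref{E:APRIORI} (namely $\|\mathscr{A}-\mathbf{Id}\|_{L^\infty}<\tfrac13$) one has $|\nabla_\eta\partial^\nu\uptheta|^2=|\mathscr{A}^s_r\partial^\nu\uptheta^i{}_{,s}|^2\simeq|D\partial^\nu\uptheta|^2$, and similarly $(\text{div}_\eta\partial^\nu\uptheta)^2$ together with $\sum_{i,j}d_id_j^{-1}([\mathscr N_\nu]^j_i)^2$ controls $|D\partial^\nu\uptheta|^2$ from above; for the lower bound one uses the standard algebraic fact that the full gradient is controlled by the symmetric-part/trace combination appearing in the energy (this is exactly the role of the $\mathscr N_\nu$ and $\text{div}_\eta$ terms, and is the gradient-energy coercivity already invoked implicitly in the definition of $\mathcal{S}^N$). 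Finally $\mathscr{J}^{-1/\alpha}\simeq 1$ by \eqref{E:APRIORI}.

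Next I would match up the $\mu$-weights. The high-order energy $\mathcal{E}^N$ carries $\mu$ on the $\mathbf{V}$ terms and $\mu^{-\delta}$ on the $\uptheta$ and gradient terms, whereas $\mathcal{S}^N$ carries $\mu^{\sigma}$ on $\partial^\nu\mathbf{V}$, no weight on $\partial^\nu\uptheta$, no weight on the lower-order ($|\nu|\leq N-1$) gradient/divergence terms, and $\mu^{-\delta}$ on the top-order ($|\nu|=N$) gradient/divergence terms. These do not match pointwise in $\tau$, so the equivalence must be read as a statement about the \emph{suprema} over $[0,\tau]$, and here I would use that $\mu(\tau)\simeq 1+\tau$ (from $\det A(t)\sim 1+t^3$ together with the time change $d\tau/dt=1/\mu$, giving $\mu$ comparable to a positive power of $1+\tau$; more precisely one checks $\mu\to\infty$ monotonically) so that $\mu\gtrsim 1$ for all $\tau\geq 0$. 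Thus on the $\uptheta$ and lower-order gradient pieces, $\mu^{-\delta}\leq 1$, so $\mathcal{E}^N+\mathcal{C}^{N-1}\lesssim$ (the $\mathbf{V}$ part with a $\mu$ weight) $+\ \mathcal{S}^N$; and the $\mathbf{V}$ part with weight $\mu$ is controlled by $\mathcal{S}^N$ only if $\mu\lesssim\mu^{\sigma}$, which fails since $\sigma<2$ could be less than $1$. This is the crux: the resolution is that $\mathcal{C}^{N-1}$ and the unweighted $\uptheta$-part of $\mathcal{S}^N$ together with the coercivity Lemma \ref{L:COERCIVITY}, which expresses $\partial^\nu\uptheta(\tau)$ via the fundamental theorem of calculus in terms of $\int_0^\tau\partial^\nu\mathbf{V}$ and initial data, allow the $\mathbf{V}$ energy to be reabsorbed — one writes $\mathcal{C}^{N-1}(\tau)\lesssim \mathcal{S}^N(\tau)+\mathcal{S}^N(0)$ directly from the coercivity estimate, and the $\mu\langle\Lambda^{-1}\partial^\nu\mathbf{V},\partial^\nu\mathbf{V}\rangle$ term in $\mathcal{E}^N$ is $\lesssim\mu^{\sigma}|\partial^\nu\mathbf{V}|^2\cdot\mu^{1-\sigma}$; since $\mu^{1-\sigma}$ may grow, one must instead observe that the \emph{definition} of $\mathcal{S}^N$ already carries $\mu^{\sigma}\|\partial^\nu\mathbf{V}\|^2$ and the energy carries $\mu\|\partial^\nu\mathbf{V}\|^2$, so in fact the intended reading is that the weight on $\mathbf V$ should be checked against $\sigma$ — and here I suspect the precise matching requires $1\le\sigma$ or a redefinition; I would therefore go back to the definition \eqref{E:SNNORMGAMMALEQ5OVER3} and \eqref{E:EDEFGAMMAGREATER5OVER3} and verify the weights agree (they should: $\mathcal{E}^N$'s $\mathbf V$-weight is $\mu$, and one expects $\mathcal{S}^N$ to have been defined with the matching power), flagging any discrepancy.

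For the lower bound $C_1\mathcal{S}^N(\tau)\leq\sup_{0\le\tau'\le\tau}\{\mathcal{E}^N(\tau')+\mathcal{C}^{N-1}(\tau')\}$, I would argue analogously in reverse: the $\partial^\nu\uptheta$ and $\partial^\nu\mathbf V$ pieces of $\mathcal{S}^N$ with $|\nu|\leq N$ are directly dominated by the corresponding pieces of $\mathcal{E}^N$ after the matrix comparisons above (using $\mu^{-\delta}\simeq\mu^{-\delta}$ trivially for the $\uptheta$ term and noting the unweighted-$\uptheta$ part of $\mathcal S^N$ is handled by $\mathcal C^{N-1}$ plus a top-order contribution); the lower-order gradient/divergence terms $|\nu|\leq N-1$ in $\mathcal{S}^N$ are unweighted and are dominated by $\mathcal{C}^{N-1}$ (which is exactly $\mathcal{E}^N$'s gradient structure stripped of the $\mu^{-\delta}$ weight, restricted to $|\nu|\le N-1$); and the top-order $|\nu|=N$ gradient/divergence terms with weight $\mu^{-\delta}$ match $\mathcal{E}^N$'s $|\nu|=N$ gradient terms exactly. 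Taking the supremum over $\tau'\in[0,\tau]$ on both sides and collecting the constants (which depend only on the fixed affine motion through the $d_i$, on the $\tfrac13$ from \eqref{E:APRIORI}, and on the Sobolev constant) yields $C_1,C_2>0$ as claimed. The main obstacle, as indicated, is the careful bookkeeping of the $\mu$-powers on the velocity term and ensuring the coercivity Lemma \ref{L:COERCIVITY} is invoked with the correct weights so that $\mathcal{C}^{N-1}$ genuinely closes the gap between the weighted energy and the partially-unweighted norm $\mathcal{S}^N$; everything else is routine once the matrix bounds are in place. \prfe
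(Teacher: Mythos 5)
Your overall approach---matrix and time-weight bounds from Lemma~\ref{L:USEFULTAULEMMAGAMMALEQ5OVER3}, boundedness of $\beta$ via $H^{N+1}\hookrightarrow L^\infty$, and the coercivity Lemma~\ref{L:COERCIVITY} to bridge unweighted and $\mu^{-\delta}$-weighted quantities---is exactly the paper's. The ``crux'' you flag, the apparent $\mu$ versus $\mu^\sigma$ mismatch on the $\mathbf V$ term, is a good catch: it is in fact a typo in \eqref{E:EDEFGAMMAGREATER5OVER3}. The correct weight is $\mu^\sigma$, as confirmed by the local energy $e(s)$ in Section~\ref{S:FINITEPROPAGATION} and by the perfect time-derivative $\tfrac12\tfrac{d}{d\tau}\bigl(\mu^\sigma\int\langle\Lambda^{-1}\partial^\nu\mathbf V,\partial^\nu\mathbf V\rangle\,dy\bigr)$ appearing in \eqref{E:HIGHORDERPOSTIPGAMMAGREATER5OVER3}. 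With that correction the $\mathbf V$ weights match exactly and no reabsorption argument is needed.

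Three further corrections. First, $\mu(\tau)\simeq e^{\mu_1\tau}$, not $1+\tau$: the asymptotic $\mu\sim 1+t$ holds in the physical time $t$, and the change of variables $d\tau/dt=1/\mu$ converts it to exponential growth in $\tau$; see \eqref{E:EXPMU1MUINEQGAMMALEQ5OVER3}. All you actually use is $\mu\ge\mu(0)>0$, hence $\mu^{-\delta}\lesssim1$, which still holds. Second, $\sum_{i,j}d_id_j^{-1}([\mathscr N_\nu]^j_i)^2$ is a weighted $\ell^2$ sum over all nine entries of $\mathscr N_\nu$ with weights bounded above and below by \eqref{E:EIGENVALUESBOUNDGAMMALEQ5OVER3}, hence directly comparable to $|\mathscr N_\nu|^2=|\nabla_\eta\partial^\nu\uptheta|^2$; there is no symmetric-part or Korn-type coercivity step here, so that portion of your argument is superfluous and, as phrased, incorrect. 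Third---and this is the only genuinely nontrivial point in the lower bound---the unweighted $\|\partial^\nu\uptheta\|^2$ at $|\nu|=N$ in $\mathcal S^N$ is dominated neither by $\mathcal C^{N-1}$ (which stops at $|\nu|=N-1$) nor directly by $\mathcal E^N$ (which carries $\mu^{-\delta}$). The fundamental-theorem-of-calculus argument of Lemma~\ref{L:COERCIVITY}, which extends verbatim to $|\nu|=N$ for \eqref{E:COERCIVITY1} since it only requires $N$ derivatives of $\mathbf V$, gives $\|\partial^\nu\uptheta\|^2\lesssim\sup_{\tau'}\mu^\sigma\|\partial^\nu\mathbf V\|^2+\|\partial^\nu\uptheta(0)\|^2$; the first piece is inside $\sup_{\tau'}\mathcal E^N$, and the second is bounded by $\mathcal E^N(0)\le\sup_{\tau'}\mathcal E^N(\tau')$ since $\mu(0)^{-\delta}$ is a fixed positive constant. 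Your phrase ``plus a top-order contribution'' should be replaced by this explicit invocation of coercivity at $|\nu|=N$.
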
 
\begin{proof}
Recall the definition $\mathcal{S}^N$ (\ref{E:SNNORMGAMMALEQ5OVER3}), the decomposition of $\Lambda$ (\ref{E:LAMBDADECOMP}) and the definition of the conjugate $\mathscr{N}_{\nu}$ (\ref{E:NNU}). Then (\ref{E:NORMENERGYEQUIVALENCE}) is a straightforward application of Lemma \ref{L:USEFULTAULEMMAGAMMALEQ5OVER3} to give bounds on $\Lambda$ and associated matrix quantities and the boundedness of $\beta$ given by (\ref{E:BETADEMAND}), in conjunction with Lemma \ref{L:COERCIVITY} to control terms without time weights with negative powers, which are included in $\mathcal{C}^{N-1}(\tau')$, by $\mathcal{S}^N(\tau)+\mathcal{S}^N(0)$.
\end{proof}
We are now ready to prove our central high order energy inequality which will be essential in the proof of our main result Theorem \ref{T:MAINTHEOREMGAMMALEQ5OVER3}. 
\begin{proposition}\label{P:ENERGYESTIMATE}
Let $(\uptheta, {\bf V}):\Omega \rightarrow \mathbb R^3\times \mathbb R^3$ be a unique local solution to (\ref{E:THETAEQNLINEARENERGYFUNCTION})-(\ref{E:THETAICGAMMALEQ5OVER3}) on $[0,T^*]$ for $T^*>0$ fixed with $\text{supp} \,\uptheta_0 \subseteq B_1(\mathbf{0})$, $\text{supp}\,\mathbf{V}_0 \subseteq B_1(\mathbf{0})$ and assume $(\uptheta, {\bf V})$ satisfies the a priori assumptions (\ref{E:APRIORI}). Fix $N\geq 4$.  Suppose $\beta$ in (\ref{E:THETAEQNLINEARENERGYFUNCTION}) satisfies $\| \beta \|^2_{H^{N+1}(\mathbb{R}^3)} \leq \lambda$ and $\text{supp} \, \beta \subseteq B_1(\mathbf{0})$ where  $\lambda > 0$ is fixed.  Then for all $\tau \in [0,T^*]$, we have the following inequality for some $0<\kappa\ll 1$,
\begin{align}
& \mathcal E^N(\tau)+\mathcal{C}^{N-1}(\tau)+\int_0^\tau \mathcal D^N(\tau')\,d\tau' \lesssim  \mathcal{S}^N(0) +  \lambda  + \mathcal{B}^N[\uptheta](\tau)  + \kappa\mathcal{S}^N(\tau) \notag \\ 
& \qquad \qquad + \int_0^\tau  (1+\tau')(\mathcal{S}^N(\tau'))^{\frac12} (\mathcal{B}^N[{\bf V}](\tau'))^{\frac12}\,d\tau'  + \int_0^\tau (1+\tau')e^{-\mu_0\tau'} \mathcal{S}^N(\tau') d\tau'. \label{E:ENERGYMAINGAMMAGREATER5OVER3}
\end{align}
\end{proposition}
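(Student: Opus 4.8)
The plan is to run a weighted $L^2$ energy estimate at each derivative level $|\nu|\le N$. First I would apply $\partial^\nu$ to the equation (\ref{E:THETAEQNLINEARENERGYFUNCTION}), multiply by the natural test function $\Lambda^{-1}_{im}\partial^\nu\mathbf{V}^m$, and integrate over $\mathbb{R}^3$. The first four terms, after writing $A=\mu O$ and exploiting the antisymmetry of $O^\top_\tau O - O^\top O_\tau$ exactly as in the local estimate of Proposition \ref{P:LOCALENERGYESTIMATE}, assemble into $\frac{d}{d\tau}$ of the velocity and $\uptheta$ parts of $\mathcal{E}^N$ together with the positive dissipation contributions in $\mathcal{D}^N$ and lower order remainders; the $\Gamma^*$ term and the $\partial_\tau\Lambda^{-1}$ term are controlled using the $\mu$ and $\Lambda$ bounds of Lemma \ref{L:USEFULTAULEMMAGAMMALEQ5OVER3}, so these produce at worst a $\mathcal{S}^N(\tau')$ contribution with integrable time weight.

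The core of the estimate is the pressure term $\partial^\nu\big(\tfrac{\overline{C}\mu^{-\delta}}{w}(w\Lambda_{ij}(1+\beta)(\mathscr{A}^k_j\mathscr{J}^{-\frac1\alpha}-\delta^k_j))_{,k}\big)$. I would distribute $\partial^\nu$, isolate the top order piece where all derivatives land on $\mathscr{A}\mathscr{J}^{-\frac1\alpha}-\text{{\bf Id}}$, integrate by parts in $y$ (the $w_{,k}/w=-y_k$ contribution producing the dangerous $-y_k$ factor), and use the expansions (\ref{E:AJIDENTITYENERGY})--(\ref{E:JIDENTITYENERGY}) to split into: (i) a gradient energy term handled via Lemma \ref{L:KEYLEMMA}, producing the $\sum d_id_j^{-1}([\mathscr{N}_\nu]^j_i)^2$ part of $\frac{d}{d\tau}\mathcal{E}^N$ and the matching dissipation; (ii) a divergence energy term producing the $\tfrac1\alpha(\text{div}_\eta\partial^\nu\uptheta)^2$ part; (iii) a curl term which, after writing it as a perfect $\tau$-derivative and integrating by parts in $\tau$ as in (\ref{E:CURLIBPTAUFINITEPROP}), is bounded by $\mathcal{B}^N[\uptheta](\tau)+\kappa\mathcal{S}^N(\tau)$ plus a time integral of $(1+\tau')(\mathcal{S}^N)^{\frac12}(\mathcal{B}^N[\mathbf{V}])^{\frac12}$; and (iv) quadratic-and-higher remainders in $D\uptheta$. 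All commutator terms, carrying fewer than $|\nu|$ derivatives on $\mathscr{A},\mathscr{J},\Lambda,\beta$, together with the $-y_k$ factors, are estimated using the finite propagation Theorem \ref{T:FINITEPROPAGATIONTHEOREM}, which bounds $|y|\lesssim 1+\tau$ on the support; combined with the decay $\mu^{-\delta}\sim e^{-\delta\mu_1\tau'}$ from Lemma \ref{L:USEFULTAULEMMAGAMMALEQ5OVER3} these yield the integrable $\int_0^\tau(1+\tau')e^{-\mu_0\tau'}\mathcal{S}^N(\tau')\,d\tau'$ contribution.

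The source term $\tfrac{\overline{C}\mu^{-\delta}}{w}\Lambda_{ik}(w\beta)_{,k}=\overline{C}\mu^{-\delta}\Lambda_{ik}(\beta_{,k}-y_k\beta)$ and its $\partial^\nu$-derivatives are handled as in Proposition \ref{P:LOCALENERGYESTIMATE}: pairing against $\Lambda^{-1}_{im}\partial^\nu\mathbf{V}^m$, using $w|y|^2\lesssim1$ when the Gaussian is present and the finite propagation bound otherwise, the compact support and smallness $\|\beta\|^2_{H^{N+1}}\le\lambda$, and integrability of negative powers of $\mu$, gives the $\lambda$ term. Finally I would add the coercivity identity of Lemma \ref{L:COERCIVITY} to bring in $\mathcal{C}^{N-1}(\tau)$ (expressing $\partial^\nu\uptheta$ via $\int_0^\tau\partial^\nu\mathbf{V}$ plus data, for $|\nu|\le N-1$), integrate the resulting differential inequality in $\tau$ from $0$, invoke Lemma \ref{L:NORMENERGYGAMMAGREATER5OVER3} to replace data energies by $\mathcal{S}^N(0)$, and retain $\kappa\mathcal{S}^N(\tau)$ on the right to be absorbed later in the continuity argument, using throughout the a priori bounds (\ref{E:APRIORI}).

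I expect the main obstacle to be the careful bookkeeping of time weights in the curl term (iii): after integrating by parts in $\tau$ one obtains top order boundary-in-$\tau$ terms controllable only modulo $\mathcal{B}^N[\uptheta](\tau)$ and $\kappa\mathcal{S}^N(\tau)$, and interior terms carrying the factor $\mu^{1-\delta-\sigma}$ whose time integral against $\mu^{-\delta}$ must be shown to produce exactly the $(1+\tau')(\mathcal{S}^N)^{\frac12}(\mathcal{B}^N[\mathbf{V}])^{\frac12}$ structure — and not something non-integrable — which is precisely where the novel time-weight manipulations and the linear-in-$\tau$ support bound from Theorem \ref{T:FINITEPROPAGATIONTHEOREM} must be combined most delicately.
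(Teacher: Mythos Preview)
Your proposal is correct and follows the paper's approach closely. Two small clarifications: in the high-order estimate there is no Gaussian weight (the energies $\mathcal{E}^N$, $\mathcal{D}^N$ are unweighted $L^2$), so the $y_k\beta$ source term is controlled via $\text{supp}\,\beta\subseteq B_1(\mathbf{0})$ alone rather than via $w|y|^2\lesssim 1$; and the factor $\mu^{1-\delta-\sigma}$ you anticipate in the curl step does not arise in this proposition --- it belongs to the curl equations used later in Proposition \ref{P:CURLESTIMATES} --- while the $(1+\tau')$ weight in (\ref{E:ENERGYMAINGAMMAGREATER5OVER3}) enters only through a fundamental-theorem-of-calculus rewriting of $\text{Curl}_{\Lambda\mathscr{A}}\partial^\nu\uptheta$ as $\int_0^{\tau'}\text{Curl}_{\Lambda\mathscr{A}}\partial^\nu\mathbf{V}\,d\tau''$ plus lower order, applied to the $\mu^{-\delta-1}\mu_\tau$ piece produced by the $\tau$-integration by parts of the curl term.
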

\begin{proof}
First write (\ref{E:THETAEQNLINEARENERGYFUNCTION}) as follows
\begin{align}\label{E:THETAEQNEXPANDPRESSUREZERO}
&\mu^{\sigma} \partial_{\tau \tau} \uptheta_i + \mu_{\tau} \mu^{-1+\sigma} \partial_\tau \uptheta_i + 2 \mu^{\sigma} \Gamma^*_{ij} \partial_{\tau} \uptheta_j + \overline{C} \mu^{-\delta} \Lambda_{i \ell} \uptheta_\ell + \overline{C} \mu^{-\delta} ( \Lambda_{ij}(1+\beta) ( \mathscr{A}_j^k \mathscr{J}^{-\frac{1}{\alpha}} - \delta_j^k))_{,k} \notag \\
&-\overline{C} \mu^{-\delta} y_k (1+\beta) \Lambda_{ij} (\mathscr{A}_j^k \mathscr{J}^{-\frac{1}{\alpha}} - \delta_j^k)+\overline{C} \mu^{-\delta} \Lambda_{ik} \beta_{,k}  -\overline{C} \mu^{-\delta} \beta \Lambda_{ik} y_k =0.
\end{align}
\textbf{Zeroth order estimate}
Multiply (\ref{E:THETAEQNEXPANDPRESSUREZERO}) by $\Lambda^{-1}_{im} \partial_\tau \uptheta^m$ and integrate over $\int_0^\tau \int_{\mathbb{R}^3} \, dy d \tau'$,
\begin{align}\label{E:THETAEQNPOSTINNERPRODUCTZERO}
&\int_0^\tau \int_{\mathbb{R}^3} \left( \mu^{\sigma} \partial_{\tau \tau} \uptheta_i + \mu_{\tau} \mu^{-1+\sigma} \partial_\tau \uptheta_i + 2 \mu^{\sigma} \Gamma^*_{ij} \partial_{\tau} \uptheta_j + \overline{C} \mu^{-\delta} \Lambda_{i \ell} \uptheta_\ell \right)  \Lambda^{-1}_{im} \partial_\tau \uptheta^m \, dy d \tau' \notag \\
&+\int_0^\tau \int_{\mathbb{R}^3} \overline{C} \mu^{-\delta} ( \Lambda_{ij} (1+\beta) ( \mathscr{A}_j^k \mathscr{J}^{-\frac{1}{\alpha}} - \delta_j^k))_{,k} \Lambda^{-1}_{im} \partial_\tau \uptheta^m \, dy d \tau' \notag \\
&- \int_0^\tau \int_{\mathbb{R}^3} \overline{C} \mu^{-\delta} y_k (1+\beta) \Lambda_{ij} (\mathscr{A}_j^k \mathscr{J}^{-\frac{1}{\alpha}} - \delta_j^k) \Lambda^{-1}_{im} \partial_\tau \uptheta^m dy d \tau' \notag \\
&+ \int_0^\tau \int_{\mathbb{R}^3} \left( \overline{C} \mu^{-\delta} \Lambda_{ik} \beta_{,k}  -\overline{C} \mu^{-\delta} \beta \Lambda_{ik} y_k \right) \Lambda^{-1}_{im} \partial_\tau \uptheta^m dy d \tau' =0.
\end{align}
The zeroth order estimate for the first and second integrals in (\ref{E:THETAEQNPOSTINNERPRODUCTZERO}) follows by a similar argument to the finite propagation energy estimate argument without boundary contributions and we do not isolate a divergence contribution: instead we control $1-\mathscr{J}^{-\frac1\alpha}$ using (\ref{E:JIDENTITYENERGY}). For the third integral in (\ref{E:THETAEQNPOSTINNERPRODUCTZERO}) apply (\ref{E:AJIDENTITYENERGY})
\begin{align}
&-\int_0^\tau \int_{\mathbb{R}^3} \overline{C} \mu^{-\delta} y_k (1+\beta) \Lambda_{ij} (\mathscr{A}_j^k \mathscr{J}^{-\frac{1}{\alpha}} - \delta_j^k) \Lambda^{-1}_{im} \partial_\tau \uptheta^m \, dy d \tau' \notag \\
&=\int_0^\tau \int_{\mathbb{R}^3} \overline{C} \mu^{-\delta} y_k (1+\beta) \Lambda_{ij} \mathscr{A}^k_\ell \uptheta^\ell,_j \mathscr{J}^{-\frac{1}{\alpha}} \Lambda^{-1}_{im} \partial_\tau \uptheta^m \, dy d \tau'  \notag \\
&+\int_0^\tau \int_{\mathbb{R}^3} \overline{C} \mu^{-\delta}  (1+\beta) \Lambda_{ik} y_k (\frac{1}{\alpha}\text{Tr}[D \uptheta ] + O(|D\uptheta|^2)) \Lambda^{-1}_{im} \partial_\tau \uptheta^m \, dy d \tau' \label{E:THIRDINTEGRALZEROORDEREXPAND}
\end{align}
Applying the Finite Propagation Theorem \ref{T:FINITEPROPAGATIONTHEOREM} to control $y$, and the boundedness of $\beta$, the first integral on the right hand side of (\ref{E:THIRDINTEGRALZEROORDEREXPAND}) is bounded by
\begin{align}
&\int_0^\tau \int_{\mathbb{R}^3} ((1+K \tau') \mu^{-\delta/2}) \mu^{-\delta/2} \uptheta^i,_j \partial_\tau \uptheta^m \, dy d \tau' \notag \\
& \lesssim \int_0^\tau  \left(\int_{\mathbb{R}^3} \mu^{-\delta} |\uptheta^i,_j|^2 \, dy \right)^{1/2} \left( \int_{\mathbb{R}^3} |\partial_\tau \uptheta^m|^2 \, dy\right)^{1/2} d \tau' \notag \\
& \lesssim \int_0^\tau e^{-\mu_0 \tau'} \mathcal{S}^N d \tau'
\end{align}
A similar argument holds for the second integral on the right hand side of the first term on the right hand side of (\ref{E:THIRDINTEGRALZEROORDEREXPAND}). For the fourth integral in \ref{E:THETAEQNPOSTINNERPRODUCTZERO})
\begin{align}
&\int_0^\tau \int_{\mathbb{R}^3} \left( \overline{C} \mu^{-\delta} \Lambda_{ik} \beta_{,k}  - \overline{C} \mu^{-\delta} \beta \Lambda_{ik} y_k \right) \Lambda^{-1}_{im} \partial_\tau \uptheta^m dy d \tau'  \notag \\
&=  \int_0^\tau \int_{\mathbb{R}^3} \overline{C} \mu^{-\delta} \Lambda_{ik} \beta_{,k}  \Lambda^{-1}_{im} \partial_\tau \uptheta^m dy d \tau' - \int_0^\tau \int_{\mathbb{R}^3} \overline{C} \mu^{-\delta}  \beta \Lambda_{ik} y_k \Lambda^{-1}_{im} \partial_\tau \uptheta^m dy d \tau'. \label{E:FOURTHINTEGRALZEROORDEREXPAND}
\end{align}
Using the compact support of $\beta$ to bound $y_k$ and then the smallness of $\| \beta\|_{L^2(\mathbb{R}^3)}$, the second integral on the right hand side of (\ref{E:THIRDINTEGRALZEROORDEREXPAND}) is bounded by
\begin{align}
&\int_0^\tau \int_{\mathbb{R}^3} \overline{C}  \beta \partial_\tau \uptheta^m dy d \tau'  \\
& \lesssim \int_0^\tau \left(\int_{\mathbb{R}^3} \beta^2 dx \right)^{1/2} \left(\int_{\mathbb{R}^3} |\partial_\tau \uptheta^m|^2 dx \right)^{1/2}  \notag \\
& \lesssim \int_0^\tau \lambda^{1/2} e^{-\mu_0 \tau'} (\mathcal{S}^N)^{1/2} d \tau' \notag \\
& \lesssim \lambda \int_0^\tau e^{-\mu_0 \tau'} d \tau' + \int_0^\tau e^{-\mu_0 \tau'} \mathcal{S}^N d \tau' \notag \\
& \lesssim \lambda + \int_0^\tau e^{-\mu_0 \tau'} \mathcal{S}^N d \tau'.
\end{align}
A similar argument directly using the smallness of $\| D \beta\|_{L^2(\mathbb{R}^3)}$ gives that the third integral on the right hand side (\ref{E:THIRDINTEGRALZEROORDEREXPAND}) is bounded by
\begin{equation}
\lambda + \int_0^\tau e^{-\mu_0 \tau'} \mathcal{S}^N d \tau'.
\end{equation}
To complete the zeroth order estimate, we obtain the full expression for $\mathcal{E}^N$ in (\ref{E:ENERGYMAINGAMMAGREATER5OVER3}) by adding the following formula 
 \begin{equation}\label{E:ZEROORDERFULLENGAMMALEQ5OVER3}
\int_0^\tau \frac{1}{2}\frac{d}{d\tau} \frac1\alpha \int_{\mathbb{R}^3}  \mathscr{J}^{-\frac1\alpha} |\text{div}_\eta \uptheta |^2 dy d\tau' =  \int_0^\tau \frac{1}{2\alpha} \int_{\mathbb{R}^3}  \left( \partial_\tau (\mathscr{J}^{-\frac1\alpha} )|\text{div}_\eta \uptheta |^2
 + \mathscr{J}^{-\frac1\alpha} \partial_\tau(|\text{div}_\eta \uptheta |^2) \right) dy d \tau',
\end{equation}
with the right-hand side in turn bounded by $e^{-\mu_0 \tau} \mathcal{S}^N(\tau)$ by the $H^2 (\mathbb{R}^3) \hookrightarrow L^\infty (\mathbb{R}^3)$ embedding. Then the left-hand side of (\ref{E:ZEROORDERFULLENGAMMALEQ5OVER3}) completes $\mathcal{E}^N$ in (\ref{E:ENERGYMAINGAMMAGREATER5OVER3}) and contributes to $\mathcal{S}^N(0)$ by the fundamental theorem of calculus.
\\ \\
\textbf{High order estimates}
Fix $\nu \in \mathbb{Z}^3_{\geq 0}$ with $|\nu| \geq 1$. Apply $\partial^\nu$ to (\ref{E:THETAEQNEXPANDPRESSUREZERO})
\begin{align}\label{E:DERIVATIVETHETAEQNEXPANDPRESSURE}
&\mu^{\sigma} \partial_{\tau \tau} \partial^\nu \uptheta_i + \mu_{\tau} \mu^{-1+\sigma} \partial^\nu \partial_\tau \uptheta_i + 2 \mu^{\sigma} \Gamma^*_{ij} \partial_{\tau} \partial^\nu \uptheta_j + \overline{C} \mu^{-\delta} \Lambda_{i \ell} \partial^\nu \uptheta_\ell \notag \\
&+ \overline{C} \mu^{-\delta} \Lambda_{ij} ( \partial^\nu[(1+\beta)( \mathscr{A}_j^k \mathscr{J}^{-\frac{1}{\alpha}} - \delta_j^k )])_{,k} -\overline{C} \mu^{-\delta} \Lambda_{ij} \partial^\nu [y_k (1+\beta) (\mathscr{A}_j^k \mathscr{J}^{-\frac{1}{\alpha}} - \delta_j^k)] \notag \\
&+\overline{C} \mu^{-\delta} \Lambda_{ik} (\partial^\nu \beta)_{,k}  - \overline{C} \mu^{-\delta} \Lambda_{ik} \partial^\nu( y_k \beta ) =0.
\end{align}
Multiply (\ref{E:DERIVATIVETHETAEQNEXPANDPRESSURE}) by $\Lambda^{-1}_{im}\partial_\tau \partial^\nu \uptheta^m$ and integrate over $\mathbb{R}^3$ and then from $0$ to $\tau$,
\begin{align}
&\int_0^\tau \frac12\frac{d}{d\tau}\left(\mu^\sigma \int_{\mathbb{R}^3} \langle\Lambda^{-1}\partial^\nu {\bf V},\,\partial^\nu {\bf V}\rangle \,dy+\overline{C} \mu^{-\delta} \int_{\mathbb{R}^3} \, |\partial^\nu \uptheta |^2 
\,dy \right) d \tau' \notag \\
& +\int_0^\tau \frac{\overline{C}}{2}\mu^{-\delta} \frac{\mu_\tau}{\mu}\int_{\mathbb{R}^3} |\partial^\nu \uptheta |^2 dy d \tau' - \int_0^\tau \frac{\mu^{\sigma}}{2}\int_{\mathbb{R}^3}
\, \left\langle\left[\partial_\tau\Lambda^{-1}-4 \Lambda^{-1}\Gamma^*\right] \partial^\nu {\bf V},\partial^\nu {\bf V}\right\rangle dy d \tau' \notag \\
& + \overline{C} \int_0^\tau \mu^{-\delta} \int_{\mathbb{R}^3} \Lambda_{ij} ( \partial^\nu[(1+\beta)( \mathscr{A}_j^k \mathscr{J}^{-\frac{1}{\alpha}} - \delta_j^k )])_{,k} \Lambda^{-1}_{im} \partial_\tau \partial^\nu \uptheta^m \,dy d \tau' \notag \\
&- \overline{C} \int_0^\tau \mu^{-\delta} \int_{\mathbb{R}^3} \Lambda_{ij} \partial^\nu [y_k (1+\beta) (\mathscr{A}_j^k \mathscr{J}^{-\frac{1}{\alpha}} - \delta_j^k)] \Lambda^{-1}_{im} \partial^\nu \uptheta^m_\tau \,dy d \tau' \notag \\
&+\overline{C} \int_0^\tau \mu^{-\delta} \int_{\mathbb{R}^3} \Lambda_{ik} (\partial^\nu \beta)_{,k} \Lambda^{-1}_{im} \partial^\nu \uptheta^m_\tau \, dy d\tau' - \overline{C} \int_0^\tau \mu^{-\delta}  \int_{\mathbb{R}^3} \Lambda_{ik} \partial^\nu( y_k \beta ) \Lambda^{-1}_{im} \partial^\nu \uptheta^m_\tau \, dy d\tau'=0. \label{E:HIGHORDERPOSTIPGAMMAGREATER5OVER3}
\end{align}
By the same reasoning as the zeroth order case, the first four integrals on the left hand side of (\ref{E:HIGHORDERPOSTIPGAMMAGREATER5OVER3}) contribute to the energy inequality (\ref{E:ENERGYMAINGAMMAGREATER5OVER3}) in the same way. For the fourth to last integral on the left hand side of (\ref{E:HIGHORDERPOSTIPGAMMAGREATER5OVER3}), first note via the Leibniz rule
\begin{equation}
\partial^\nu [(1+\beta)(\mathscr{A}_j^k \mathscr{J}^{-\frac{1}{\alpha}} - \delta_j^k)]=(1+\beta) \partial^{\nu} (\mathscr{A}_j^k \mathscr{J}^{-\frac{1}{\alpha}}) + \sum_{0 \leq |\nu'| \leq |\nu|-1} c_{\nu'} \partial^{\nu-\nu'} (\beta) \partial^{\nu'} (\mathscr{A}_j^k \mathscr{J}^{-\frac{1}{\alpha}}).
\end{equation}
Then the fourth to last integral on the left hand side of (\ref{E:HIGHORDERPOSTIPGAMMAGREATER5OVER3}) is
\begin{align}\label{E:FOURTHTOLASTINTEGRALEXPAND}
&\overline{C} \int_0^\tau \mu^{-\delta} \int_{\mathbb{R}^3} \Lambda_{ij} ( \partial^\nu[(1+\beta)( \mathscr{A}_j^k \mathscr{J}^{-\frac{1}{\alpha}} - \delta_j^k )])_{,k} \Lambda^{-1}_{im} \partial_\tau \partial^\nu \uptheta^m \,dy d \tau' \notag \\
&= \overline{C} \int_0^\tau \mu^{-\delta} \int_{\mathbb{R}^3} \Lambda_{ij} [(1+\beta) \partial^{\nu} (\mathscr{A}_j^k \mathscr{J}^{-\frac{1}{\alpha}})]_{,k} \Lambda^{-1}_{im} \partial_\tau \partial^\nu \uptheta^m \,dy d \tau' \notag \\
&+  \overline{C} \int_0^\tau \mu^{-\delta} \int_{\mathbb{R}^3} \Lambda_{ij} \left(\sum_{0 \leq |\nu'| \leq |\nu|-1} c_{\nu'} \partial^{\nu-\nu'} (\beta) \partial^{\nu'} (\mathscr{A}_j^k \mathscr{J}^{-\frac{1}{\alpha}})\right)_{,k} \Lambda^{-1}_{im} \partial_\tau \partial^\nu \uptheta^m \,dy d \tau'. 
\end{align}
For the first integral on the right hand side of (\ref{E:FOURTHTOLASTINTEGRALEXPAND}), first note using our differentiation formulae for $\mathscr{A}$ and $\mathscr{J}$ (\ref{E:AJDIFFERENTIATIONFORMULAE})
\begin{equation}\label{E:DERIVATIVEAJFORMULA}
\partial_{y_i} (\mathscr{A}^k_j\mathscr{J}^{-\frac1\alpha}) = - \mathscr{J}^{-\frac1\alpha}\mathscr{A}^k_\ell\mathscr{A}^s_j ( \partial_{y_i} \uptheta^\ell),_s  -\tfrac1\alpha \mathscr{J}^{-\frac1\alpha}\mathscr{A}^k_j\mathscr{A}^s_\ell (\partial_{y_i} \uptheta^\ell),_s.
\end{equation}
Then using this and applying integration by parts which holds due to finite propagation, the first integral on the right hand side of (\ref{E:FOURTHTOLASTINTEGRALEXPAND}) is
\begin{align}
& \overline{C} \int_0^\tau \mu^{-\delta} \int_{\mathbb{R}^3}  \Lambda_{ij} [ (1+\beta) \partial^\nu ( \mathscr{A}^k_j\mathscr{J}^{-\frac1\alpha} ) ],_k \Lambda^{-1}_{im} \partial_\tau \partial^\nu \uptheta^m \,dy d\tau' \notag \\
& = \overline{C} \int_0^\tau \mu^{-\delta} \int_{\mathbb{R}^3}  \Lambda_{ij} [(1+\beta)(- \mathscr{J}^{-\frac1\alpha}\mathscr{A}^k_\ell\mathscr{A}^s_j \partial^\nu \uptheta^\ell,_s -\tfrac1\alpha \mathscr{J}^{-\frac1\alpha}\mathscr{A}^k_j\mathscr{A}^s_\ell \partial^\nu \uptheta^\ell,_s \notag \\
& \qquad \qquad \qquad \qquad \qquad + \mathcal{C}^{\nu,k}_j(\uptheta))],_k \Lambda^{-1}_{im} \partial_\tau \partial^\nu \uptheta^m \,dy d\tau' \notag \\
&= -\overline{C} \int_0^\tau \mu^{-\delta} \int_{\mathbb{R}^3}  [ (1+\beta)(\mathscr{J}^{-\frac1\alpha}\mathscr{A}^k_\ell (\Lambda_{ij}\mathscr{A}^s_j \partial^\nu \uptheta^\ell,_s - \Lambda_{\ell j}\mathscr{A}^s_j \partial^\nu \uptheta^i,_s) + \mathscr{J}^{-\frac1\alpha}\mathscr{A}^k_\ell\Lambda_{\ell j}\mathscr{A}^s_j \partial^\nu \uptheta^i,_s  \notag \\
&+ \tfrac1\alpha\mathscr{J}^{-\frac1\alpha}\Lambda_{ij}\mathscr{A}^k_j\mathscr{A}^s_\ell  \partial^\nu \uptheta^\ell,_s )],_k \Lambda^{-1}_{im} \partial_\tau \partial^\nu \uptheta^m \,dy d \tau' +\overline{C} \int_0^\tau \mu^{-\delta} \int_{\mathbb{R}^3}  \Lambda_{ij} [(1 +\beta) \mathcal{C}^{\nu,k}_j ],_k \Lambda^{-1}_{im}\partial_\tau \partial^\nu \uptheta^m \,dy d \tau' \notag \\
&=\overline{C} \int_0^\tau \mu^{-\delta} \int_{\mathbb{R}^3}  (1+\beta)\mathscr{J}^{-\frac1\alpha} (\mathscr{A}^k_\ell  [\text{Curl}_{\Lambda\mathscr{A}}\partial^\nu\uptheta]^\ell_i + \mathscr{A}^k_\ell \Lambda_{\ell j}[\nabla_\eta \partial^\nu\uptheta]^i_j  \notag \\
&  \qquad \qquad \qquad \qquad \qquad + \tfrac1\alpha \Lambda_{ij}\mathscr{A}^k_j \text{div}_\eta (\partial^\nu \uptheta)) \Lambda^{-1}_{im}\partial_\tau (\partial^\nu \uptheta^m),_k \,dy d \tau' \notag \\
&+\overline{C} \int_0^\tau  \mu^{-\delta} \int_{\mathbb{R}^3}  \Lambda_{ij}  [(1+\beta) \mathcal{C}^{\nu,k}_j ],_k \Lambda^{-1}_{im} \partial_\tau \partial^\nu \uptheta^m \,dy d \tau' \notag \\
&=:I_1+I_2, \label{E:HIGHORDERLASTINTEGRALIBPGAMMAGREATER5OVER3}
\end{align}
where we define $\mathcal{C}^{\nu,k}_j$ to be the lower order terms resulting from differentiating the result of (\ref{E:DERIVATIVEAJFORMULA}). Note
\begin{equation}
\| \mathcal{C}^{\nu,k}_j \|^2 \lesssim \mathcal{S}^N, \quad \mu^{-\delta} \| D (\mathcal{C}^{\nu,k}_j) \|^2 \lesssim \mathcal{S}^N.
\end{equation}
Then using the boundedness of $\beta$ and $D \beta$
\begin{equation}
|I_2| \lesssim \int_0^\tau \| \partial_\tau \partial^\nu \uptheta \|  \| \mathcal{C}^{\nu,k}_j \| + \| \partial_\tau \partial^\nu \uptheta \| \mu^{-\delta} \| D (\mathcal{C}^{\nu,k}_j) \| d \tau' \lesssim \int_0^\tau e^{-\mu_0 \tau'} \mathcal{S}^N d \tau'.
\end{equation}
For $I_1$ rewrite the gradient and divergence terms, and integrate by parts in $\tau$ the curl term as follows
\begin{align}
&I_1 =  \int_0^\tau \mu^{-\delta} \int_{\mathbb{R}^3} (1+\beta) \mathscr{J}^{-\frac1\alpha} \mathscr{A}^k_\ell [\text{Curl}_{\Lambda\mathscr{A}}\partial^\nu\uptheta]^\ell_i \Lambda^{-1}_{im}\partial_\tau \left(\partial^\nu \uptheta^m\right),_k \,dy d\tau'  \notag \\
&+ \int_0^\tau \mu^{-\delta} \int_{\mathbb{R}^3} (1+\beta)  \mathscr{J}^{-\frac1\alpha} \left( \mathscr{A}^k_\ell \Lambda_{\ell j}[\nabla_\eta \partial^\nu\uptheta]^i_j + \tfrac1\alpha \Lambda_{ij}\mathscr{A}^k_j \text{div}_\eta (\partial^\nu \uptheta) \right) \Lambda^{-1}_{im}\partial_\tau \left(\partial^\nu \uptheta^m\right),_k \,dy d\tau'  \notag \\
&= \int_0^\tau \mu^{-\delta} \int_{\mathbb{R}^3}  (1+\beta)  \mathscr{J}^{-\frac1\alpha} \mathscr{A}^k_\ell \Lambda^{-1}_{im} [\text{Curl}_{\Lambda\mathscr{A}}\partial^\nu\uptheta]^\ell_i \partial_\tau \left(\partial^\nu \uptheta^m\right),_k \,dy d\tau'  \notag \\
&+ \int_0^\tau  \mu^{-\delta} \int_{\mathbb{R}^3} (1+\beta) \mathscr{J}^{-\frac1\alpha} \left( \Lambda_{\ell j}[\nabla_\eta \partial^\nu \uptheta]^i_j \Lambda^{-1}_{im}[\nabla_\eta\partial_\tau \partial^\nu \uptheta ]_\ell^m+\tfrac1\alpha \text{div}_\eta (\partial^\nu\uptheta) \text{div}_\eta (\partial_\tau \partial^\nu \uptheta) \right) \,dy d\tau' \notag \\ 
& =  \mu^{-\delta} \int_{\mathbb{R}^3} (1+\beta) \mathscr{J}^{-\frac1\alpha} \mathscr{A}^k_\ell\Lambda^{-1}_{im} [\text{Curl}_{\Lambda\mathscr{A}}\partial^\nu\uptheta]^\ell_i \left(\partial^\nu{\bf\uptheta}^m\right),_k \,dy \Big|^\tau_0 \notag \\
&  -\int_0^\tau \mu^{-\delta} \int_{\mathbb{R}^3} (1+\beta)  \mathscr{J}^{-\frac1\alpha} \mathscr{A}^k_\ell\Lambda^{-1}_{im}
[\text{Curl}_{\Lambda\mathscr{A}}\partial^\nu{\bf V}]^\ell_i \left(\partial^\nu{\bf\uptheta}^m\right),_k \,dy\,d\tau' \notag\\
&+\delta \int_0^\tau \mu^{-\delta-1} \mu_{\tau}  \int_{\mathbb{R}^3}  (1+\beta) \mathscr{J}^{-\frac1\alpha} \mathscr{A}^k_\ell\Lambda^{-1}_{im}
[\text{Curl}_{\Lambda\mathscr{A}}\partial^\nu{\bf \uptheta}]^\ell_i \left(\partial^\nu{\bf\uptheta}^m\right),_k \,dy\,d\tau' \notag \\
&  -\int_0^\tau \mu^{-\delta} \int_{\mathbb{R}^3} (1+\beta)  \partial_\tau\left(\mathscr{J}^{\frac1\alpha} \mathscr{A}^k_\ell\Lambda^{-1}_{im}\right)
[\text{Curl}_{\Lambda\mathscr{A}}\partial^\nu{\bf \uptheta}]^\ell_i \left(\partial^\nu{\bf\uptheta}^m\right),_k \,dy\,d\tau'  \notag \\
&  -\int_0^\tau \mu^{-\delta} \int_{\mathbb{R}^3} (1+\beta)  \mathscr{J}^{-\frac1\alpha} \mathscr{A}^k_\ell\Lambda^{-1}_{im}
[\text{Curl}_{\Lambda_\tau\mathscr{A}}\partial^\nu{\bf \uptheta}]^\ell_i \left(\partial^\nu{\bf\uptheta}^m\right),_k \,dy \,d\tau' \notag \\
&- \int_0^\tau \mu^{-\delta} \int_{\mathbb{R}^3}  (1+\beta) \mathscr{J}^{-\frac1\alpha} \mathscr{A}^k_\ell\Lambda^{-1}_{im}
[\text{Curl}_{\Lambda\mathscr{A}_\tau}\partial^\nu{\bf \uptheta}]^\ell_i \left(\partial^\nu{\bf\uptheta}^m\right),_k \,dy\,d\tau' \notag \\
&+\int_0^\tau \mu^{-\delta} \int_{\mathbb{R}^3}  (1+\beta) \mathscr{J}^{-\frac1\alpha} \left( \Lambda_{\ell j}[\nabla_\eta \partial^\nu \uptheta]^i_j \Lambda^{-1}_{im}\partial_\tau [\nabla_\eta \partial^\nu \uptheta ]_\ell^m+\tfrac1\alpha\text{div}_\eta (\partial^\nu \uptheta) \partial_\tau \text{div}_\eta ( \partial^\nu \uptheta) \right) dy d\tau' \notag \\
&-\int_0^\tau \mu^{-\delta} \int_{\mathbb{R}^3}(1+\beta) \mathscr{J}^{-\frac1\alpha} \left( \Lambda_{\ell j}[\nabla_\eta \partial^\nu \uptheta]^i_j \Lambda^{-1}_{im}\partial_\tau\mathscr{A}^k_\ell \left(\partial^\nu \uptheta^m\right),_k+ \tfrac1\alpha\text{div}_\eta (\partial^\nu\uptheta)  \partial_\tau \mathscr{A}^k_j \left(\partial^\nu \uptheta^j\right),_k  \right) dy d\tau' \\
&:=B_1+B_2+B_3+\mathcal{R}_1+\mathcal{R}_2+\mathcal{R}_3+E_1+\mathcal{R}_4. \label{E:HIGHORDERI1EXPANDGAMMAGREATER5OVER3} 
\end{align}
For $B_1$, for $0 < \kappa \ll 1$,
\begin{equation}
B_1 \lesssim \mathcal{S}^N(0)+\kappa \mathcal{S}^N(\tau) + \mathcal{B}^N[\uptheta](\tau),
\end{equation}
where we have introduced $\kappa$ through the Young inequality.
For $B_2$,
\begin{equation}
|B_2| \lesssim \int_0^\tau (\mathcal{S}^N)^{1/2} (\mathcal{B}^N[\mathbf V])^{1/2} d \tau'.
\end{equation}
For $B_3$, first apply the Fundamental Theorem of Calculus as follows
\begin{align}
&\text{Curl}_{\Lambda\mathscr{A}}\partial^\nu \uptheta (\tau',y)=\int_0^{\tau'} \partial_\tau \text{Curl}_{\Lambda\mathscr{A}}\partial^\nu \uptheta (\tau'',y) d \tau''+\text{Curl}_{\Lambda\mathscr{A}}\partial^\nu \uptheta (0,y) \notag \\
&=\int_0^{\tau'} \text{Curl}_{\Lambda\mathscr{A}} \partial^\nu \mathbf{V} (\tau'',y) d \tau''+\int_0^{\tau'} \text{Curl}_{\partial_{\tau}(\Lambda\mathscr{A})} \partial^\nu \uptheta (\tau'',y) d \tau''+\text{Curl}_{\Lambda\mathscr{A}}\partial^\nu \uptheta (0,y) \notag \\
&\lesssim \tau' \sup_{0 \leq \tau'' \leq \tau'} \{ \text{Curl}_{\Lambda\mathscr{A}} \partial^\nu \mathbf{V} \} + \tau' \sup_{0 \leq \tau'' \leq \tau'} \{ \text{Curl}_{\partial_{\tau}(\Lambda\mathscr{A})} \partial^\nu \uptheta \} + \text{Curl}_{\Lambda\mathscr{A}}\partial^\nu \uptheta (0,y).
\end{align}
Then using the boundedness of $\frac{\mu_\tau}{\mu}$
\begin{align}
|B_3| & \lesssim \int_0^\tau \mu^{-\delta} \int_{\mathbb{R}^3} ( \tau'  \sup_{0 \leq \tau'' \leq \tau'} \{ | [ \text{Curl}_{\Lambda\mathscr{A}} \partial^\nu \mathbf{V} ]^\ell_i | \} + \tau' \sup_{0 \leq \tau'' \leq \tau'} \{ | [\text{Curl}_{\partial_{\tau}(\Lambda\mathscr{A})} \partial^\nu \uptheta]^\ell_i | \}  \notag\\
&\qquad \qquad + | [\text{Curl}_{\Lambda\mathscr{A}}\partial^\nu \uptheta (0)]^\ell_i | ) \left(\partial^\nu{\bf\uptheta}^m\right),_k \,dy\,d\tau' \notag \\
& \lesssim \int_0^\tau \tau' (\mathcal{S}^N)^{1/2} (\mathcal{B}^N[\mathbf V])^{1/2} + \tau' e^{-\mu_0 \tau'} \mathcal{S}^N d \tau'  \notag \\
&+ \left( \kappa \sup_{0 \leq \tau \leq \tau'} \mu^{-\delta} \|\nabla_{\eta} \partial^\nu \uptheta \|^2 + \frac{1}{\kappa} \| \text{Curl}_{\Lambda\mathscr{A}}\partial^\nu \uptheta (0) \|^2 \right) \int_0^\tau \mu^{-\delta} d \tau' \notag \\
& \lesssim  \int_0^\tau \tau' (\mathcal{S}^N)^{1/2} (\mathcal{B}^N[\mathbf V])^{1/2} + \tau' e^{-\mu_0 \tau'} \mathcal{S}^N d \tau' + \kappa \mathcal{S}^N + \mathcal{S}^N(0).
\end{align}
For $E_1$
\begin{align}
E_1&=\frac12 \int_0^\tau \frac{d}{d\tau}\left\{\mu^{-\delta} \int_{\mathbb{R}^3}  (1+\beta) \mathscr{J}^{-\frac1\alpha}\left(\sum_{i,j=1}^3 d_id_j^{-1}(\mathscr N_\nu)^j_{i})^2 + \frac1\alpha\left(\text{div}_\eta \partial^\nu \uptheta\right)^2\right)\,dy\right\}\,d\tau' \notag\\
&+\int_0^\tau \frac{\delta}{2}\mu^{-\delta}\frac{\mu_\tau}{\mu} \int_{\mathbb{R}^3}   (1+\beta) \mathscr{J}^{-\frac1\alpha}\Big[\sum_{i,j=1}^3 d_id_j^{-1}\left((\mathscr{N}_{\nu})^j_{i}\right)^2+\tfrac1\alpha\left(\text{div}_\eta X_r^a\slashed\partial^\beta \uptheta\right)^2\Big] w^{a+\alpha+1}e^{\bar{S}} \, dy \, d\tau' \notag \\
&+ \int_0^\tau \mu^{-\delta} \int_{\mathbb{R}^3}  (1+\beta) \mathscr{J}^{-\frac1\alpha} \mathcal T_{\nu} dy \,d\tau' \notag \\
& - \frac12 \int_0^\tau \mu^{-\delta} \int_{\mathbb{R}^3} (1+\beta) \partial_\tau\left(\mathscr{J}^{-\frac1\alpha}\right)\left(\sum_{i,j=1}^3 d_id_j^{-1}(\mathscr N_{\nu})^j_{i})^2 + \frac1\alpha\left(\text{div}_\eta \partial^\nu \uptheta\right)^2\right)\,dy \,d\tau' \notag \\
&:=E_2+\mathcal{D}_1 + \mathcal{R}_5 + \mathcal{R}_6, \label{E:HIGHORDERE1EXPANDGAMMAGREATER5OVER3}
\end{align}
where we have used Lemma \ref{L:KEYLEMMA}. Then $E_2$ contributes to $\mathcal{E}^N(\tau)$ in (\ref{E:ENERGYMAINGAMMAGREATER5OVER3}), and also to $\mathcal{S}^N(0)$ by the fundamental theorem of calculus. Also $\mathcal{D}_1$ contributes to $\int_0^\tau \mathcal{D}^N (\tau ') \, d\tau'$ in (\ref{E:ENERGYMAINGAMMAGREATER5OVER3}). Now
\begin{equation}
| \mathcal{R}_i | \lesssim \int_0^\tau e^{-\mu_0 \tau'} \mathcal{S}^N d \tau',
\end{equation}
for $i=1,2,3,4,5,6$.

For the second integral on the right hand side of (\ref{E:FOURTHTOLASTINTEGRALEXPAND}) first note
\begin{align}\label{E:EXPANDLEIBNIZRULEENERGYESTIMATE}
&\left(\sum_{0 \leq |\nu'| \leq |\nu|-1} c_{\nu'} \partial^{\nu-\nu'} (\beta) \partial^{\nu'} (\mathscr{A}_j^k \mathscr{J}^{-\frac{1}{\alpha}})\right)_{,k} = \partial^{\nu} (\beta_{,k}) \mathscr{A}_j^k \mathscr{J}^{-\frac{1}{\alpha}} + \partial^{\nu} (\beta) (\mathscr{A}_j^k \mathscr{J}^{-\frac{1}{\alpha}})_{,k} \notag \\
&+ \left(\sum_{1 \leq |\nu'| \leq |\nu|-1} c_{\nu'} \partial^{\nu-\nu'} (\beta) \partial^{\nu'} (\mathscr{A}_j^k \mathscr{J}^{-\frac{1}{\alpha}})\right)_{,k}.
\end{align}
For the integral resulting from the first term on the right hand side of (\ref{E:EXPANDLEIBNIZRULEENERGYESTIMATE})
\begin{align}\label{E:SOURCETERMHIGHORDERENERGYESTIMATE}
&\left| \overline{C} \int_0^\tau \mu^{-\delta} \int_{\mathbb{R}^3} \Lambda_{ij} \partial^{\nu} (\beta_{,k}) \mathscr{A}_j^k \mathscr{J}^{-\frac{1}{\alpha}} \Lambda^{-1}_{im} \partial_\tau \partial^\nu \uptheta^m \,dy d \tau' \right| \lesssim \int_0^\tau \int_{\mathbb{R}^3}  (\partial^\nu \beta)_{,k} \partial_\tau \partial^\nu \uptheta^m dy d \tau' \notag \\
& \lesssim \int_0^\tau \left(\int_{\mathbb{R}^3} (\partial^\nu \beta)_{,k}^2 dx \right)^{1/2} \left(\int_{\mathbb{R}^3} |\partial^\nu \uptheta^m_\tau |^2 dx \right)^{1/2}  \notag \\
& \lesssim \int_0^\tau \lambda^{1/2} e^{-\mu_0 \tau'} (\mathcal{S}^N)^{1/2} d \tau' \notag \\
& \lesssim \lambda  \int_0^\tau e^{-\mu_0 \tau'} d \tau' + \int_0^\tau e^{-\mu_0 \tau'} \mathcal{S}^N d \tau' \notag \\
& \lesssim \lambda + \int_0^\tau e^{-\mu_0 \tau'} \mathcal{S}^N d \tau'.
\end{align}
For the remaining integrals from the last two terms on the right hand side of (\ref{E:EXPANDLEIBNIZRULEENERGYESTIMATE}) with both the $H^2 (\mathbb{R}^3) \hookrightarrow L^\infty (\mathbb{R}^3)$ embedding and boundedness of $\| \beta \|_{H^{N+1}(\mathbb{R}^3)}$, the resulting integrals are bounded by
\begin{equation}
\int_0^\tau e^{-\mu_0 \tau'} \mathcal{S}^N d \tau'.
\end{equation} 
For the third to last integral on the left hand side of (\ref{E:HIGHORDERPOSTIPGAMMAGREATER5OVER3}), first note:
\begin{align}
\partial^\nu [y_k(1+\beta)(\mathscr{A}_j^k \mathscr{J}^{-\frac{1}{\alpha}} - \delta_j^k)]&= y_k \partial^{\nu} [(1+\beta)(\mathscr{A}_j^k \mathscr{J}^{-\frac{1}{\alpha}} - \delta_j^k)]  \notag \\
&+ \sum_{|\nu'|=|\nu|-1} c_{\nu',k} \partial^{\nu'} [(1+\beta)(\mathscr{A}_j^k \mathscr{J}^{-\frac{1}{\alpha}} - \delta_j^k) ],
\end{align}
where $c_{\nu',k}$ are non-negative constants depending on $\nu',k$.  Then the third to last integral on the left hand side of (\ref{E:HIGHORDERPOSTIPGAMMAGREATER5OVER3}) is
\begin{align}\label{E:THIRDTOLASTINTEGRALPOSTDERIVATIVE}
&- \overline{C} \int_0^\tau \mu^{-\delta} \int_{\mathbb{R}^3} \Lambda_{ij} \partial^\nu [y_k (1+\beta) (\mathscr{A}_j^k \mathscr{J}^{-\frac{1}{\alpha}} - \delta_j^k)] \Lambda^{-1}_{im} \partial^\nu \uptheta^m_\tau \,dy d \tau' \notag \\
&= - \overline{C} \int_0^\tau \mu^{-\delta} \int_{\mathbb{R}^3} \Lambda_{ij} y_k \partial^{\nu} [(1+\beta)(\mathscr{A}_j^k \mathscr{J}^{-\frac{1}{\alpha}} - \delta_j^k)] \Lambda^{-1}_{im} \partial^\nu \uptheta^m_\tau \,dy d \tau' \notag \\
& - \overline{C} \int_0^\tau \mu^{-\delta} \int_{\mathbb{R}^3} \Lambda_{ij} \sum_{|\nu'|=|\nu|-1} c_{\nu',k} \partial^{\nu'} [(1+\beta)(\mathscr{A}_j^k \mathscr{J}^{-\frac{1}{\alpha}} - \delta_j^k) ]] \Lambda^{-1}_{im} \partial^\nu \uptheta^m_\tau \,dy d \tau'
\end{align}
For the first integral on the right hand side of (\ref{E:THIRDTOLASTINTEGRALPOSTDERIVATIVE}), use finite propagation and then a combination of the $H^2 (\mathbb{R}^3) \hookrightarrow L^\infty (\mathbb{R}^3)$ embedding, boundedness of $\| \beta \|_{H^{N+1}(\mathbb{R}^3)}$ and (\ref{E:EXPANDAJMINUSID}) to estimate
\begin{align}\label{E:FINITEPROPAGATIONHIGHORDER}
& \left| -\overline{C} \int_0^\tau \mu^{-\delta} \int_{\mathbb{R}^3} \Lambda_{ij} y_k \partial^{\nu} [(1+\beta)(\mathscr{A}_j^k \mathscr{J}^{-\frac{1}{\alpha}} - \delta_j^k)] \Lambda^{-1}_{im} \partial^\nu \uptheta^m_\tau \,dy d \tau' \right| \notag \\
& \leq \int_0^\tau \int_{\mathbb{R}^3} \overline{C} \frac{1+K \tau}{\mu^{\delta}} \Lambda_{ij} \partial^{\nu} [(1+\beta)(\mathscr{A}_j^k \mathscr{J}^{-\frac{1}{\alpha}} - \delta_j^k)] \Lambda^{-1}_{im} \partial^\nu \uptheta^m_\tau \, dy d \tau' \notag \\
& \lesssim \int_0^\tau e^{-\mu_0 \tau'} \mathcal{S}^N d \tau'.
\end{align}
Using a combination of the $H^2 (\mathbb{R}^3) \hookrightarrow L^\infty (\mathbb{R}^3)$ embedding, boundedness of $\| \beta \|_{H^{N+1,\infty}(\mathbb{R}^3)}$ and (\ref{E:EXPANDAJMINUSID}), the second integral on the right hand side of (\ref{E:THIRDTOLASTINTEGRALPOSTDERIVATIVE}) is straightforward to bound by 
\begin{equation}
\int_0^\tau e^{-\mu_0 \tau'} \mathcal{S}^N d \tau'.
\end{equation}
For the second to last integral on the right hand side (\ref{E:HIGHORDERPOSTIPGAMMAGREATER5OVER3}), a similar argument to (\ref{E:SOURCETERMHIGHORDERENERGYESTIMATE}) gives
\begin{equation}
\left| \overline{C} \int_0^\tau \mu^{-\delta} \int_{\mathbb{R}^3} \Lambda_{ik} (\partial^\nu \beta)_{,k} \Lambda^{-1}_{im} \partial^\nu \uptheta^m_\tau \, dy d\tau'  \right| \lesssim \lambda + \int_0^\tau e^{-\mu_0 \tau'} \mathcal{S}^N d \tau'.
\end{equation}
For the last integral on the left hand side of (\ref{E:HIGHORDERPOSTIPGAMMAGREATER5OVER3}) on the first note
\begin{equation}
\partial^\nu (y_k \beta) = y_k \partial^{\nu} \beta + \sum_{|\nu'|=|\nu|-1} c_{\nu',k} \partial^{\nu'} \beta.
\end{equation}
Then the last integral on the left hand side of (\ref{E:HIGHORDERPOSTIPGAMMAGREATER5OVER3}) is
\begin{align}\label{E:LASTINTEGRALPOSTDERIVATIVE}
&- \overline{C} \int_0^\tau \mu^{-\delta}  \int_{\mathbb{R}^3} \Lambda_{ik} \partial^\nu( y_k \beta ) \Lambda^{-1}_{im} \partial^\nu \uptheta^m_\tau \, dy d\tau' \notag \\
&= - \overline{C} \int_0^\tau \mu^{-\delta}  \int_{\mathbb{R}^3} \Lambda_{ik} y_k \partial^{\nu} \beta \Lambda^{-1}_{im} \partial^\nu \uptheta^m_\tau \, dy d\tau' - \overline{C} \int_0^\tau \mu^{-\delta}  \int_{\mathbb{R}^3} \Lambda_{ik} \sum_{|\nu'|=|\nu|-1} c_{\nu',k} \partial^{\nu'} \beta \Lambda^{-1}_{im} \partial^\nu \uptheta^m_\tau \, dy d\tau'. 
\end{align}
Use finite propagation in the same way as in (\ref{E:FINITEPROPAGATIONHIGHORDER}) and then a similar argument to (\ref{E:SOURCETERMHIGHORDERENERGYESTIMATE}) to bound the first integral on the right hand side of (\ref{E:LASTINTEGRALPOSTDERIVATIVE}) by
\begin{equation}
\lambda + \int_0^\tau e^{-\mu_0 \tau'} \mathcal{S}^N d \tau'.
\end{equation}
Finally use a similar argument to (\ref{E:SOURCETERMHIGHORDERENERGYESTIMATE}) to bound the second integral on the right hand side of (\ref{E:LASTINTEGRALPOSTDERIVATIVE}) by
\begin{equation}
\lambda + \int_0^\tau e^{-\mu_0 \tau'} \mathcal{S}^N d \tau'.
\end{equation}
This concludes the energy estimate.
\end{proof}

\section{High Order Curl Estimates}\label{S:CURLESTIMATES}
We now prove the high order curl estimates necessary to control the curl contributions on the right hand side of our energy inequality (\ref{E:ENERGYMAINGAMMAGREATER5OVER3}).
\begin{proposition}\label{P:CURLESTIMATES}
Let $(\uptheta, {\bf V}):\Omega \rightarrow \mathbb R^3\times \mathbb R^3$ be a unique local solution to (\ref{E:THETAEQNLINEARENERGYFUNCTION})-(\ref{E:THETAICGAMMALEQ5OVER3}) on $[0,T^*]$ for $T^*>0$ fixed with $\text{supp} \,\uptheta_0 \subseteq B_1(\mathbf{0})$, $\text{supp}\,\mathbf{V}_0 \subseteq B_1(\mathbf{0})$ and assume $(\uptheta, {\bf V})$ satisfies the a priori assumptions (\ref{E:APRIORI}). Fix $N\geq 4$.  Suppose $\beta$ in (\ref{E:THETAEQNLINEARENERGYFUNCTION}) satisfies $\| \beta \|^2_{H^{N+1}(\mathbb{R}^3)} \leq \lambda$ and $\text{supp} \, \beta \subseteq B_1(\mathbf{0})$ where  $\lambda > 0$ is fixed.  Then for all $\tau \in [0,T^*]$, we have the following inequalities for some $0<\kappa\ll 1$
\begin{align}    
&\mathcal{B}^N[{\bf V}](\tau) \lesssim 
e^{-2\mu_0\tau}\left(\mathcal{S}^N(0)+\mathcal{B}^N[{\bf V}](0)\right)+e^{-2\mu_0 \tau} \lambda+ (1+\tau^2)e^{-2\mu_0\tau}\mathcal{S}^N(\tau), \label{E:CURLVBOUNDGAMMAGREATER5OVER3} \\
& \mathcal{B}^N[\uptheta](\tau) \lesssim \mathcal{S}^N(0)+\mathcal{B}^N[{\bf V}](0) +\lambda+ \kappa \mathcal{S}^N(\tau) + \int_0^\tau e^{-\mu_0\tau'}  \mathcal{S}^N(\tau')\,d\tau'. \label{E:CURLTHETABOUNDGAMMAGREATER5OVER3}
\end{align}
\end{proposition}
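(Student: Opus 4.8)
The plan is to derive from the equation \eqref{E:THETAEQNLINEARENERGYFUNCTION} two transport-type equations, one for $\operatorname{Curl}_{\Lambda\mathscr{A}}\mathbf{V}$ and one for $\operatorname{Curl}_{\Lambda\mathscr{A}}\uptheta$, in the spirit of Lemma \ref{L:CURLEQUATIONDERIVATION} that was used for the local estimates \eqref{E:CURLVLOCAL}--\eqref{E:CURLTHETALOCAL}, but now applied to $\partial^\nu\uptheta$ for $|\nu|\le N$ rather than to $\uptheta$ itself. Taking $\operatorname{Curl}_{\Lambda\mathscr{A}}$ of \eqref{E:THETAEQNLINEARENERGYFUNCTION} and using that $\Lambda$ and $\operatorname{Curl}$ annihilate the pure gradient $\Lambda_{i\ell}\uptheta_\ell$ up to commutators, the leading-order pressure contribution and the $\Lambda$-weighted gradient term drop out, leaving a $\partial_\tau$ evolution equation for $\mu\,\operatorname{Curl}_{\Lambda\mathscr{A}}\partial^\nu\mathbf{V}$ whose right-hand side consists of: the Gaussian source $\Lambda\mathscr{A}\,y\times\partial^\nu\mathbf{V}$ (the term $\frac{1}{1+\alpha}\Lambda\mathscr{A} y\times\mathbf{V}$ in \eqref{E:CURLVLOCAL}), the $\beta$-source $\Lambda\mathscr{A}\nabla\beta\times\partial^\nu\mathbf{V}$, the commutators $[\partial_\tau,\operatorname{Curl}_{\Lambda\mathscr{A}}]$, the $\Gamma^\ast$ terms, and the $\mu^{1-\delta-\sigma}$-weighted $\uptheta$ terms. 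Integrating in $\tau$ with the integrating factor $\mu$ (so the homogeneous solution decays like $\mu(0)/\mu\sim e^{-\mu_1\tau}$) and then multiplying by $\mu^{-\delta}$ yields \eqref{E:CURLVBOUNDGAMMAGREATER5OVER3}; the analogous second integration in $\tau$, using $\int_0^\tau \mu(s')^{-1}\,ds'\lesssim 1$, gives \eqref{E:CURLTHETABOUNDGAMMAGREATER5OVER3} for $\uptheta$.

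The key steps, in order: (i) establish the high-order curl equations for $\partial^\nu\mathbf{V}$ and $\partial^\nu\uptheta$, identifying precisely which terms carry a factor $y$ (from $w_{,k}/w=-y_k$) and which carry $\nabla\beta$ or $\partial^\nu\beta$; (ii) control the $y$-terms via the Finite Propagation Theorem \ref{T:FINITEPROPAGATIONTHEOREM}, replacing $|y|$ by $1+K\tau$ inside the support, which is where the polynomial factor $(1+\tau^2)$ in \eqref{E:CURLVBOUNDGAMMAGREATER5OVER3} originates; (iii) use Lemma \ref{L:USEFULTAULEMMAGAMMALEQ5OVER3} together with the computation \eqref{E:INTEGRABLEINSPRIME}-type bounds, i.e. $\mu^{-1}\int_0^\tau \mu^{1-\delta-\sigma}\,ds'\lesssim e^{-\mu_0\tau}$ and $\mu^{-\delta}(\mu(0)/\mu)^2\lesssim e^{-2\mu_0\tau}$, to extract the exponential time weights and show the various $\tau$-integrals are convergent; (iv) handle the $\beta$ contributions by the compact support and smallness $\|\beta\|_{H^{N+1}}^2\le\lambda$ exactly as in \eqref{E:SOURCETERMHIGHORDERENERGYESTIMATE}, producing the additive $\lambda$ (for $\mathbf{V}$, with the extra $e^{-2\mu_0\tau}$ weight); (v) absorb the top-order $\nabla_\eta\partial^\nu\uptheta$ pieces appearing after integration by parts using Young's inequality with $0<\kappa\ll1$, which produces the $\kappa\mathcal{S}^N(\tau)$ term in \eqref{E:CURLTHETABOUNDGAMMAGREATER5OVER3}, while for $\mathbf{V}$ all such terms come with enough $\mu$-decay that one gets $(1+\tau^2)e^{-2\mu_0\tau}\mathcal{S}^N(\tau)$ instead; (vi) for \eqref{E:CURLTHETABOUNDGAMMAGREATER5OVER3}, express $\operatorname{Curl}_{\Lambda\mathscr{A}}\partial^\nu\uptheta$ via the fundamental theorem of calculus in terms of $\operatorname{Curl}_{\Lambda\mathscr{A}}\partial^\nu\mathbf{V}$, the commutator $\operatorname{Curl}_{\partial_\tau(\Lambda\mathscr{A})}\partial^\nu\uptheta$, and initial data, then invoke \eqref{E:CURLVBOUNDGAMMAGREATER5OVER3} and the coercivity Lemma \ref{L:COERCIVITY} to close.

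The main obstacle will be step (iii) combined with (ii): the finite propagation bound replaces $y$ by a factor growing linearly in $\tau$, and because these $y$-terms are multiplied against time-weighted curl quantities that themselves only decay at the borderline rate $e^{-\mu_1\tau}$ coming from the $\mu(0)/\mu$ factor, one must be careful that the product $(1+\tau)^k e^{-\mu_1\tau}$ — after losing $\mu^{-\delta}$ to the norm weight — still decays, which forces the precise choice $\mu_0=\frac{\sigma}{2}\mu_1<\mu_1$ and the algebraic time-weight manipulations of the form $\mu^{-\delta}=\mu^{\sigma/2}\mu^{-\delta/2}\cdot\mu^{(\sigma-3/\alpha)/2+\cdots}$ already previewed in the finite-propagation boundary estimate. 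Ensuring every one of the (many) terms in the high-order analogues of \eqref{E:CURLVLOCAL}--\eqref{E:CURLTHETALOCAL} yields a $\tau$-integrand that is either $e^{-\mu_0\tau'}\mathcal{S}^N(\tau')$, or $(1+\tau')(\mathcal{S}^N)^{1/2}(\mathcal{B}^N[\mathbf{V}])^{1/2}$, or a $\lambda$-source, is the bookkeeping heart of the argument; the structural ideas (curl equation, finite propagation, $\mu$-weight juggling, Young absorption) are all in place from the preceding sections.
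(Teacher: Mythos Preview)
Your proposal is correct and follows essentially the paper's approach: apply $\partial^\nu$ to the curl identities of Lemma~\ref{L:CURLEQUATIONDERIVATION}, take $L^2$ norms (with an extra $\mu^{-\delta}$ weight at top order $|\nu|=N$), invoke finite propagation to replace $|y|$ by $1+K\tau$, and use the $\mu$-algebra $\mu^{-2}\bigl|\int_0^\tau \mu^{1-\delta-\sigma}\,d\tau'\bigr|^2\lesssim e^{-2\mu_0\tau}$ to extract the decay. The only organizational difference is in your step (vi): you propose to obtain \eqref{E:CURLTHETABOUNDGAMMAGREATER5OVER3} by writing $\operatorname{Curl}_{\Lambda\mathscr{A}}\partial^\nu\uptheta=\operatorname{Curl}_{\Lambda\mathscr{A}}\partial^\nu\uptheta(0)+\int_0^\tau\operatorname{Curl}_{\Lambda\mathscr{A}}\partial^\nu\mathbf{V}+\int_0^\tau\operatorname{Curl}_{\partial_\tau(\Lambda\mathscr{A})}\partial^\nu\uptheta$ and then feeding in the already-proven bound \eqref{E:CURLVBOUNDGAMMAGREATER5OVER3}; the paper instead works directly from the explicit double-integral formula \eqref{E:CURLTHETAFINAL} (equivalently \eqref{E:CURLTHETAHIGHORDERDERV}) and estimates each of its many terms individually, introducing auxiliary exponents $0<X<1-\tfrac{\sigma}{4}$ and $0<Y<\tfrac{\sigma}{4}+\tfrac12+\tfrac{\delta}{2}-X$ to split the outer $\mu^{-1}$ in the double integrals and still land in $\int_0^\tau e^{-\mu_0\tau'}\mathcal{S}^N\,d\tau'$. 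Your route is conceptually cleaner and would also close, but note that the coercivity Lemma~\ref{L:COERCIVITY} is not actually invoked in the paper's curl argument; the paper instead uses the fundamental theorem of calculus directly on individual factors such as $y\,\partial^\nu\uptheta$ (e.g.\ writing $\partial^\nu\uptheta=\int_0^\tau\partial^\nu\mathbf{V}+\partial^\nu\uptheta(0)$ with the splitting $\mu^{-\sigma/4}\mu^{-\sigma/4}\mu^{\sigma/2}$) to reduce to $\mathcal{S}^N(0)+\int_0^\tau e^{-\mu_0\tau'}\mathcal{S}^N\,d\tau'$.
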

\begin{proof}
\textit{Proof of} (\ref{E:CURLVBOUNDGAMMAGREATER5OVER3}).
We have derived the equation (\ref{E:CURLVFINAL}) for $\text{Curl}_{\Lambda \mathscr{A}} \mathbf{V}$ in Lemma \ref{L:CURLEQUATIONDERIVATION}. Apply $\partial^\nu$ to (\ref{E:CURLVFINAL})
\begin{align}
&\text{Curl}_{\Lambda\mathscr{A}}{\partial^\nu \bf V}  =-[\partial^\nu,\text{Curl}_{\Lambda \mathscr{A}}]\mathbf{V} +\frac{1}{1+\alpha}\partial^\nu(\Lambda\mathscr{A} y \times \mathbf{V}) + \frac{\alpha}{1+\alpha}\partial^\nu( (1+\beta)^{-1} (\Lambda\mathscr{A} \nabla \beta \times \mathbf{V})) \notag \\
& + \frac{\mu(0) \partial^\nu(\text{Curl}_{\Lambda \mathscr{A}} ({\bf V}(0)))}{\mu} - \frac{\mu(0)\partial^\nu( \Lambda\mathscr{A} y \times \mathbf{V}(0))}{(1+\alpha) \mu}+ \frac{\alpha \mu(0) \partial^\nu ( (1+\beta)^{-1}(\Lambda\mathscr{A} \nabla \beta \times \mathbf{V}(0))}{(1+\alpha) \mu} \notag \\
&  + \frac{1}{\mu}\int_0^\tau \mu \partial^\nu [\partial_\tau, \text{Curl}_{\Lambda\mathscr{A}}] {\bf V} d\tau' -\frac{1}{(1+\alpha)\mu}\int_0^\tau \mu \partial^\nu [\partial_\tau,\Lambda \mathscr{A} y \times ] \mathbf{V} d\tau ' \notag \\
&+ \frac{\alpha}{(1+\alpha) \mu} \int_0^{\tau} \mu \partial^\nu ((1+\beta)^{-1} [\partial_{\tau},\Lambda \mathscr{A} \nabla \beta \times ] \mathbf{V}) d \tau' \notag \\
& -  \frac{2}{\mu} \int_0^\tau \mu \,\partial^\nu( \text{Curl}_{\Lambda\mathscr{A}}(\Gamma^\ast{\bf V})) d\tau' + \frac{2}{(1+\alpha)\mu} \int_0^\tau \mu \, \partial^\nu(\Lambda\mathscr{A} y \times (\Gamma^\ast \mathbf{V})) d\tau ' \notag \\
&+ \frac{2 \alpha}{(1+\alpha) \mu} \int_0^{\tau} \mu \, \partial^\nu ((1+\beta)^{-1} \Lambda\mathscr{A} \nabla \beta \times (\Gamma^\ast \mathbf{V})) d \tau' \notag \\
& +\frac{\overline{C}}{2 \mu}\int_0^\tau \mu^{1-\delta-\sigma} \partial^\nu(\Lambda y \times  \Lambda \uptheta )d \tau ' -\frac{\overline{C}}{2 \mu}\int_0^\tau \mu^{1-\delta-\sigma} \partial^\nu(\Lambda \mathscr{A}[D \uptheta]y \times \Lambda \eta) d \tau ' \notag \\
& +\frac{\alpha \overline{C}}{(1+\alpha) \mu} \int_0^{\tau} \mu^{1-\delta-\sigma} \partial^\nu( (1+\beta)^{-1}(\Lambda \mathscr{A} \nabla \beta \times \Lambda \uptheta )) d \tau' \notag \\
&-\frac{\alpha \overline{C}}{(1+\alpha) \mu}\int_0^{\tau} \mu^{1-\delta-\sigma} \partial^\nu ( (1+\beta)^{-1} (\Lambda \mathscr{A} \nabla \beta \times \Lambda y)) d \tau'. \label{E:CURLVHIGHORDERDERV}
\end{align}
We take the $ \| \cdot \|^2$ norm of (\ref{E:CURLVHIGHORDERDERV}), and if $|\nu| = N$ then multiply by $[\mu(\tau)]^{-\delta}$, and then estimate the right hand side. As can be seen from (\ref{E:CURLVHIGHORDERDERV}), many terms and hence estimates are similar. Therefore we give the key estimates below and remark similar arguments will hold for the other terms.

For the second term on the right hand side of (\ref{E:CURLVHIGHORDERDERV}), $\frac{1}{1+\alpha}\partial^\nu(\Lambda\mathscr{A} y \times \mathbf{V})$, for $|\nu| \leq N-1$ use finite propagation,
\begin{equation}
\int_{\mathbb{R}^3} \frac{1}{(1+\alpha)^2} |\partial^\nu(\Lambda\mathscr{A} y \times \mathbf{V})|^2 \lesssim (1+\tau^2) e^{-2\mu_0\tau}\mathcal{S}^N
\end{equation}
For $|\nu| = N$, similarly
\begin{equation}
\mu^{-\delta} \| \frac{1}{1+\alpha}\partial^\nu(\Lambda\mathscr{A} y \times \mathbf{V}) \|^2 \lesssim (1+\tau^2) e^{-2\mu_0\tau}\mathcal{S}^N,
\end{equation}
including $\mu^{-\delta}$ in $\mathcal{S}^N$ if needed. 

For the third term on the right hand side of (\ref{E:CURLVHIGHORDERDERV}), use a combination of the $H^2 (\mathbb{R}^3) \hookrightarrow L^\infty (\mathbb{R}^3)$ embedding and boundedness of $\| \beta \|_{H^{N+1,\infty}(\mathbb{R}^3)}$
\begin{equation}
e^{-2\mu_0 \tau}\mathcal{S}^N \gtrsim 
\begin{dcases}
\|\frac{\alpha}{1+\alpha}\partial^\nu( (1+\beta)^{-1} (\Lambda\mathscr{A} \nabla \beta \times \mathbf{V}))\|^2  &\text{if } \ |\nu| \leq N-1,  \\
\mu^{-\delta} \|\frac{\alpha}{1+\alpha}\partial^\nu( (1+\beta)^{-1} (\Lambda\mathscr{A} \nabla \beta \times \mathbf{V}))\|^2 &\text{if } \ |\nu| = N.
\end{dcases}
\end{equation} 
For the fifth term on the right hand side of (\ref{E:CURLVHIGHORDERDERV}), we use the compact support of $\partial^\nu \partial_\tau \uptheta(0)$ for $\nu \leq N$ from finite propagation to in this case bound $|y| \lesssim 1$, and we conclude
\begin{equation}
e^{-2\mu_0 \tau}(\mathcal{S}^N(0)) \gtrsim 
\begin{dcases}
\| \frac{\mu(0)\partial^\nu( \Lambda\mathscr{A} y \times \mathbf{V}(0))}{(1+\alpha) \mu}\|^2  &\text{if } \ |\nu| \leq N-1,  \\
\mu^{-\delta} \|\frac{\mu(0)\partial^\nu( \Lambda\mathscr{A} y \times \mathbf{V}(0))}{(1+\alpha) \mu}\|^2 &\text{if } \ |\nu| = N.
\end{dcases}
\end{equation}
For eighth term on the right hand side of (\ref{E:CURLVHIGHORDERDERV}), $\frac{\alpha}{(1+\alpha) \mu} \int_0^{\tau} \mu \partial^\nu ((1+\beta)^{-1} [\partial_{\tau},\Lambda \mathscr{A} \nabla \beta \times ] \mathbf{V}) d \tau'$, first compute
\begin{equation}
\partial^\nu [\partial_\tau,\Lambda \mathscr{A} y \times]\partial_\tau \uptheta^k_j = \partial^\nu (\partial_\tau (\Lambda_{jm} \mathscr{A}^s_m) y_s \partial_\tau \uptheta^k-\partial_\tau(\Lambda_{km}\mathscr{A}^s_m) y_s \partial_\tau \uptheta^j).
\end{equation}
For the first term on the right hand side,
\begin{align}
&\partial_\tau (\partial_\tau \Lambda_{jm} \mathscr{A}^s_m y_s + \Lambda_{jm} \partial_\tau \mathscr{A}^s_m y_s) \partial_\tau \uptheta^k  = \partial_\tau \Lambda_{jm}((\partial^\nu \mathscr{A}^s_m) y_s \partial_\tau \uptheta^k + \mathscr{A}_m^s y_s (\partial^\nu \partial_\tau \uptheta^k)) \notag \\
& + \Lambda_{jm} (\partial^\nu (\partial_\tau \mathscr{A}^s_m) y_s \partial_\tau \uptheta^k + \partial_\tau \mathscr{A}^s_m y_s \partial^\nu \partial_\tau \uptheta^k) + \mathscr{R}.
\end{align}
We are denoting by $\mathscr{R}$ above favorable remainder terms. Schematically consider the first two terms on the right hand side of above
\begin{equation}\label{E:CURLERROR1}
\underbrace{\partial_\tau\Lambda \,\partial^\nu D\uptheta y {\bf V}}_{=:D_1} + \underbrace{\partial_\tau\Lambda  A \, y \partial^\nu {\bf V}}_{=:D_2} + \underbrace{\Lambda (\partial^\nu D{\bf V}) \, y {\bf V}}_{=:D_3} +\underbrace{\Lambda D{\bf V} \, y\partial^\nu {\bf V}}_{=:D_4}
\end{equation}
For $D_1$
\begin{align}
& \int_{\mathbb{R}^3} \frac{1}{\mu^2} \Big| \int_0^\tau \mu \partial_\tau\Lambda \,\partial^\nu D\uptheta y {\bf V} \,d\tau'\Big|^2\,dy \notag \\
& \lesssim e^{-2\mu_1\tau} \int_{\mathbb{R}^3} \Big|\int_0^\tau \left|e^{\mu_1\tau'}(1+K\tau') \partial_\tau\Lambda \,\partial^\nu D\uptheta  {\bf V} \right|\,d\tau'\Big|^2\,dy  \notag \\
& \lesssim e^{-2\mu_1\tau} \sup_{0\le\tau'\le\tau} \int_{\mathbb{R}^3}  |\partial^\nu D\uptheta|^2 \Big| \int_0^\tau  |e^{\mu_1 \tau'} (1+\tau') \partial_\tau\Lambda {\bf V} | \,d\tau'\Big|^2 \,dy \notag \\
& \lesssim  e^{-2\mu_1 \tau}  \sup_{0\le\tau'\le\tau} \left( \| \partial^\nu D\uptheta\|^2\right) \Big| \int_0^\tau e^{\mu_1 \tau'} (1+\tau') e^{-\mu_1 \tau'} e^{-\mu_0 \tau'} (\mathcal{S}^N)^{1/2} d \tau' \Big|^2 \notag \\
& \lesssim e^{-2\mu_0\tau} \mathcal{S}^N(\tau), \label{E:CURLEST1}
\end{align}
where we use the $H^2 (\mathbb{R}^3) \hookrightarrow L^\infty (\mathbb{R}^3)$ embedding on $\| \mathbf{V} \|_{L^\infty}$. For $D_2$,
\begin{align}
\int_{\mathbb{R}^3} \frac{1}{\mu^2} | \int_0^\tau \mu \partial_\tau \Lambda \mathscr{A} y \partial^\nu \partial_\tau \uptheta d \tau' |^2 & \lesssim \int_{\mathbb{R}^3} \frac{1}{\mu^2} (1+K\tau)^2 \sup_{0 \leq \tau \leq \tau'} | \partial^\nu \partial_\tau \uptheta |^2 | \int_0^\tau 1 d \tau'|^2 \notag \\
& \lesssim e^{-2\mu_1 \tau'} (1+K\tau)^2 e^{-2\mu_0 \tau} \mathcal{S}^N \tau^2 \notag \\
& \lesssim e^{-2\mu_0 \tau'} \mathcal{S}^N
\end{align}
For $D_3$, first integrate by parts in $\tau$,
\begin{align}
&\frac{1}{\mu} \int_0^\tau \mu \Lambda (\partial^\nu \partial_\tau D \uptheta) y \partial_\tau \uptheta d \tau' =  \frac1\mu\left(\mu \Lambda \mathbf{V} \,(\partial^\nu D \uptheta) y \right)\big|^\tau_0  \notag \\
 &- \frac1\mu\int_0^\tau \mu(\frac{\mu_\tau}\mu \Lambda \partial_\tau \uptheta y + \partial_\tau \Lambda \partial_\tau \uptheta y + \Lambda \partial_{\tau\tau} \uptheta y ) \,\partial^\nu D \uptheta\,d\tau' \notag \\
&=A+B+C+D.
\end{align}
Then
\begin{align}
\|A \| &\lesssim (1+K \tau) \| \partial_\tau \uptheta \|_{L^\infty} \| \partial^\nu D \uptheta \|+\mu^{-1} \mathcal{S}^N(0) \lesssim (1+\tau) e^{-\mu_0 \tau} (\mathcal{S}^N)^{1/2} \| \partial^\nu D \uptheta \| + e^{-\mu_0 \tau} \mathcal{S}^N(0), \notag \\
\| B \| & \lesssim \sup_{0 \leq \tau' \leq \tau} \| \partial^\nu D \uptheta \| e^{-\mu_1 \tau} \int_0^\tau (1+\tau') e^{\mu_1 \tau'-\mu_0 \tau'} \| \partial_\tau \uptheta \|_{L^\infty} d \tau', \notag \\
&\lesssim \sup_{0 \leq \tau' \leq \tau} \| \partial^\nu D \uptheta \| (\mathcal{S}^N)^{1/2} e^{-\mu_0\tau} (1+\tau) \notag \\
\| C \| &\lesssim \sup_{0 \leq \tau' \leq \tau} \| \partial^\nu D \uptheta \| (\mathcal{S}^N)^{1/2} e^{-\mu_0 \tau}.
\end{align}
Now for $D$ in the same way as for $B$ first note
\begin{equation}
\left| e^{-\mu_1 \tau} \int_0^\tau e^{\mu_1 \tau'-\mu_0 \tau'}  d \tau' \right| \lesssim e^{-\mu_0\tau}.
\end{equation}
Then using our equation for $\uptheta$ (\ref{E:THETAEQNLINEARENERGYFUNCTION}) to rewrite $\partial_{\tau \tau} \uptheta$ and noting each term gains a $\mu^{-\sigma}$ factor from $\mu^{\sigma} \partial_{\tau \tau} \uptheta_i$ in (\ref{E:THETAEQNLINEARENERGYFUNCTION}) and our a priori assumption $\mathcal{S}^N < 1/3$, we have
\begin{equation}
\| D \| \lesssim \sup_{0 \leq \tau' \leq \tau} \| \partial^\nu D \uptheta \| e^{-\mu_0 \tau} (1+\tau),
\end{equation}
For $D_4$,
\begin{align}
\int_{\mathbb{R}^3} \frac{1}{\mu^2} | \int_0^\tau \mu \Lambda D{\bf V} y \partial^\nu \partial_\tau \uptheta d \tau' |^2 & \lesssim \int_{\mathbb{R}^3} \frac{1}{\mu^2} (1+K\tau)^2 \| D {\bf V} \|_{L^\infty}^2 \sup_{0 \leq \tau \leq \tau'} | \partial^\nu \partial_\tau \uptheta |^2 | \int_0^\tau 1 d \tau'|^2 \notag \\
& \lesssim e^{-2\mu_1 \tau'} (1+K\tau)^2 e^{-2\mu_0 \tau} \mathcal{S}^N \tau^2 \notag \\
& \lesssim e^{-2\mu_0 \tau'} \mathcal{S}^N
\end{align}
Hence
\begin{equation}
(1+\tau^2)e^{-2\mu_0 \tau}\mathcal{S}^N \gtrsim 
\begin{dcases}
\|\frac{1}{(1+\alpha)\mu}\int_0^\tau \mu \partial^\nu [\partial_\tau,\Lambda \mathscr{A} y \times ] \mathbf{V} d\tau '\|^2  &\text{if } \ |\nu| \leq N-1,  \\
\mu^{-\delta} \|\frac{1}{(1+\alpha)\mu}\int_0^\tau \mu \partial^\nu [\partial_\tau,\Lambda \mathscr{A} y \times ] \mathbf{V} d\tau '\|^2 &\text{if } \ |\nu| = N.
\end{dcases}
\end{equation}
For the thirteenth term on the right hand side of (\ref{E:CURLVHIGHORDERDERV}), $\frac{\overline{C}}{2 \mu}\int_0^\tau \mu^{1-\delta-\sigma} \partial^\nu(\Lambda y \times  \Lambda \uptheta )d \tau$, first compute
\begin{equation}
\partial^\nu[(\Lambda y \times \Lambda \uptheta)]^i_j = \partial^\nu ( \Lambda_{js} y_s \Lambda_{ik} \uptheta^k - \Lambda_{is} y_s \Lambda_{jk} \uptheta^k).
\end{equation}
For the left term
\begin{equation}\label{E:H9DERIVATIVELEFTTERM}
\partial^\nu (\Lambda_{js} y_s \Lambda_{ik} \uptheta^k) = \Lambda_{js} \Lambda_{ik} y_s \partial^\nu + \sum_{|\nu'|=|\nu|-1} c_{v',s} \Lambda_{js} \Lambda_{ik} \partial^{\nu'} \uptheta^k
\end{equation}
Schematically consider the first term
\begin{equation}
\Lambda \Lambda y \partial^\nu \uptheta := C_1
\end{equation}
Then:
\begin{align}\label{E:C1BOUNDFIRSTGAMMALEQ5OVER3}  
\| \frac{\overline{C}}{2 \mu}\int_0^\tau \mu^{1-\delta-\sigma}  C_1 d \tau ' \|^2 &= \int_{\mathbb{R}^3} \frac{\overline{C}^2}{4 \mu^2} \Big| \int_0^\tau \mu^{1-\delta-\sigma} \Lambda \Lambda y \partial^\nu \uptheta d \tau' \Big|^2 d y \notag \\
&\lesssim(1+\tau^2) \frac{1}{\mu^2} \Big| \int_0^\tau \mu^{1-\delta-\sigma} d \tau' \Big|^2 \sup_{0 \leq \tau \leq \tau'} \|  \partial^\nu \uptheta \|^2 \notag \\
&\lesssim (1+\tau^2) U(\tau) \mathcal{S}^N(\tau),
\end{align}
where we define $U(\tau)=\frac{1}{\mu^2} | \int_0^\tau \mu^{1-\delta-\sigma} d \tau'|^2$. Now for $U$,
\begin{align}\label{E:UBOUNDFIRSTGAMMALEQ5OVER3}
U &\lesssim e^{-2 \mu_1 \tau} \Big| \int_0^\tau e^{(1-\delta-\sigma)\mu_1 \tau'} d \tau' \Big|^2 \notag \\
&\lesssim e^{-2 \mu_1\tau}(e^{2(1-\delta-\sigma)\mu_1\tau}+1) \notag \\
& \lesssim e^{-2 \mu_0\tau}
\end{align}   
Hence
\begin{equation}
\| \frac{\overline{C}}{2 \mu}\int_0^\tau \mu^{1-\delta-\sigma}  C_1 d \tau ' \|^2 \lesssim (1+\tau^2) e^{-2\mu_0\tau}\mathcal{S}^N
\end{equation}
Similar for the sum in (\ref{E:H9DERIVATIVELEFTTERM}) except bound is just $e^{-2\mu_0\tau}\mathcal{S}^N$ there. Hence
\begin{equation}
(1+\tau^2)e^{-2\mu_0 \tau}\mathcal{S}^N \gtrsim 
\begin{dcases}
\| \frac{\overline{C}}{2 \mu}\int_0^\tau \mu^{1-\delta-\sigma} \partial^\nu(\Lambda y \times  \Lambda \uptheta )d \tau ' \|^2  &\text{if } \ |\nu| \leq N-1  \\
\mu^{-\delta} \|\frac{\overline{C}}{2 \mu}\int_0^\tau \mu^{1-\delta-\sigma} \partial^\nu(\Lambda y \times  \Lambda \uptheta )d \tau '\|^2 &\text{if } \ |\nu| = N
\end{dcases}
\end{equation}
For the fourteenth term on the right hand side of (\ref{E:CURLVHIGHORDERDERV}), $-\frac{\overline{C}}{2 \mu}\int_0^\tau \mu^{1-\delta-\sigma} \partial^\nu(\Lambda \mathscr{A}[D \uptheta]y \times \Lambda \eta) d \tau '$, first rewrite 
\begin{align}
&[\Lambda \mathscr{A} [D \uptheta] y \times \Lambda \eta]^i_j =\Lambda_{jm}\mathscr{A}^s_\ell \uptheta^\ell,_m y_s \Lambda_{ik} \eta^k - \Lambda_{im} \mathscr{A}^s_\ell \uptheta^\ell,_m y_s \Lambda_{jk} \eta^k \notag \\ 
&=(\Lambda_{jm}\mathscr{A}^s_\ell \uptheta^\ell,_m y_s \Lambda_{ik} \uptheta^k - \Lambda_{im} \mathscr{A}^s_\ell \uptheta^\ell,_m y_s \Lambda_{jk} \uptheta^k) + (\Lambda_{jm}\mathscr{A}^s_\ell \uptheta^\ell,_m y_s \Lambda_{ik} y^k - \Lambda_{im} \mathscr{A}^s_\ell \uptheta^\ell,_m y_s \Lambda_{jk} y^k) \notag \\
&=[\Lambda \mathscr{A} [D \uptheta] y \times \Lambda \uptheta]^i_j + [\Lambda \mathscr{A} [D \uptheta] y \times \Lambda y]^i_j.
\end{align}
The $\Lambda \mathscr{A} [D \uptheta] y \times \Lambda \uptheta$ term is similar to other terms and analogous arguments apply to estimate this term. For the $\Lambda \mathscr{A} [D \uptheta] y \times \Lambda y$ term, first compute:
\begin{align}
&\partial^\nu (\Lambda_{jm}\mathscr{A}^s_\ell \uptheta^\ell,_m y_s \Lambda_{ik} y^k) = \underbrace{c \Lambda \Lambda (\partial^\nu (D \uptheta D \uptheta)) y y}_{=:G_1} \notag \\
&+ \sum_{|\nu'| = |\nu|-2} c (\partial^{\nu'} (D \uptheta D \uptheta) ) \Lambda \Lambda + \sum_{|\nu'| = |\nu|-1} c (\partial^{\nu'} (D \uptheta D \uptheta) ) y \Lambda \Lambda.
\end{align}
The two sums above are straightforward to handle using methods from above and $H^2 (\mathbb{R}^3) \hookrightarrow L^\infty (\mathbb{R}^3)$ embedding. For $G_1$, for $0 \leq |\nu| \leq N-1$,
\begin{align}\label{E:C1BOUNDFIRSTGAMMALEQ5OVER3}  
\| \frac{\overline{C}}{2 \mu}\int_0^\tau \mu^{1-\delta-\sigma}  G_1 d \tau ' \|^2 &= \int_{\mathbb{R}^3} \frac{\overline{C}^2}{4 \mu^2} \Big| \int_0^\tau c \mu^{1-\delta-\sigma} \Lambda \Lambda \partial^\nu (D \uptheta D \uptheta) y y d \tau' \Big|^2 d y \notag \\
&\lesssim \frac{1}{\mu^2} \Big| \int_0^\tau (1+K \tau')^2 \mu^{1-\delta-\sigma} d \tau' \Big|^2 \sup_{0 \leq \tau \leq \tau'} \|  \partial^\nu \uptheta \|^2 \notag \\
&\lesssim \frac{1}{\mu^2} \Big| \int_0^\tau (1+K \tau')^2 \mu^{1-\delta-\sigma} d \tau' \Big|^2 \sup_{0 \leq \tau \leq \tau'} \|  \partial^\nu \uptheta \|^2 \notag \\
&\lesssim  \frac{1}{\mu^2} \Big | \int_0^\tau \mu^{1-\sigma} d \tau' \Big|^2 \sup_{0 \leq \tau \leq \tau'} \|  \partial^\nu \uptheta \|^2 \notag \\
&\lesssim e^{-2\mu_1 \tau}(e^{2\mu_1\tau-\sigma \mu_1 \tau}+1) \mathcal{S}^N(\tau) \notag \\
&\lesssim e^{-2\mu_0 \tau} \mathcal{S}^N(\tau).
\end{align}
For $\nu=N$ a similar argument holds except there we include the additional, artificially introduced, time weight $\mu^{-\delta}$ in $\mathcal{S}^N$. Hence
\begin{equation}
(1+\tau^2)e^{-2\mu_0 \tau}\mathcal{S}^N \gtrsim 
\begin{dcases}
\|\frac{\overline{C}}{2 \mu}\int_0^\tau \mu^{1-\delta-\sigma} \partial^\nu(\Lambda \mathscr{A}[D \uptheta]y \times \Lambda \eta) d \tau ' \|^2  &\text{if } \ |\nu| \leq N-1  \\
\mu^{-\delta} \| \frac{\overline{C}}{2 \mu}\int_0^\tau \mu^{1-\delta-\sigma} \partial^\nu(\Lambda \mathscr{A}[D \uptheta]y \times \Lambda \eta) d \tau ' \|^2 &\text{if } \ |\nu| = N
\end{dcases}
\end{equation}
For the last term on the right hand side of (\ref{E:CURLVHIGHORDERDERV}), use the compact support of $\beta$ and it's derivatives to bound $y$, and then apply (\ref{E:UBOUNDFIRSTGAMMALEQ5OVER3}) in combination with the $H^2 (\mathbb{R}^3) \hookrightarrow L^\infty (\mathbb{R}^3)$ embedding and $\| \beta \|^2_{H^{N+1,\infty}(\mathbb{R}^3)} \leq \lambda$ to obtain
\begin{equation}
e^{-2\mu_0 \tau} \lambda \gtrsim 
\begin{dcases}
\|\frac{\alpha \overline{C}}{(1+\alpha) \mu}\int_0^{\tau} \mu^{1-\delta-\sigma} \partial^\nu ( (1+\beta)^{-1} (\Lambda \mathscr{A} \nabla \beta \times \Lambda y)) d \tau'\|^2  &\text{if } \ |\nu| \leq N-1  \\
\mu^{-\delta} \|\frac{\alpha \overline{C}}{(1+\alpha) \mu}\int_0^{\tau} \mu^{1-\delta-\sigma} \partial^\nu ( (1+\beta)^{-1} (\Lambda \mathscr{A} \nabla \beta \times \Lambda y)) d \tau'\|^2 &\text{if } \ |\nu| = N
\end{dcases}
\end{equation}
This concludes the proof of (\ref{E:CURLVBOUNDGAMMAGREATER5OVER3}). \\ \\
\textit{Proof of} (\ref{E:CURLTHETABOUNDGAMMAGREATER5OVER3}). Using the Fundamental Theorem of Calculus we rewrite two terms in the equation (\ref{E:CURLTHETAFINAL}) for $\text{Curl}_{\Lambda \mathscr{A}} \uptheta$ in Lemma \ref{L:CURLEQUATIONDERIVATION}. Apply $\partial^\nu$ to the resulting equation
\begin{align}
&\text{Curl}_{\Lambda\mathscr{A}}{\partial^\nu \uptheta}  = -[\partial^\nu,\text{Curl}_{\Lambda \mathscr{A}}]\uptheta +  \frac{1}{1+\alpha} \partial^\nu(\Lambda \mathscr{A} y \times \uptheta) +  \frac{\alpha}{1+\alpha} \partial^\nu((1+\beta)^{-1} \Lambda\mathscr{A} \nabla \beta \times \uptheta) \notag \\
&+\partial^\nu(\text{Curl}_{\Lambda \mathscr{A}}([\uptheta(0)])) - \frac{1}{1+\alpha} \partial^\nu(\Lambda \mathscr{A} y \times \uptheta(0)) - \frac{\alpha}{1+\alpha} \partial^\nu( (1+\beta)^{-1} \Lambda\mathscr{A} \nabla \beta \times \uptheta (0) ) \notag \\
&+\mu(0) \partial^\nu( \text{Curl}_{\Lambda \mathscr{A}} ({\bf V}(0))) \int_0^\tau \frac{1}{\mu(\tau')} d \tau' - \frac{\mu(0) \partial^\nu( \Lambda\mathscr{A} y \times \mathbf{V}(0))}{1+\alpha} \int_0^\tau \frac{1}{\mu(\tau')} d \tau' \notag \\
&- \frac{\alpha \mu(0) \partial^\nu( (1+\beta)^{-1} \Lambda\mathscr{A} \nabla \beta \times \mathbf{V}(0))}{1+\alpha}\int_0^\tau \frac{1}{\mu(\tau')} d \tau' \notag \\
&+ \int_0^\tau \partial^\nu( [\partial_\tau, \text{Curl}_{\Lambda\mathscr{A}}] {\uptheta}) d\tau' + \frac{1}{1+\alpha}\int_0^\tau \partial^\nu( [\partial_\tau,\Lambda \mathscr{A} y \times] \uptheta) \notag \\
&+ \frac{\alpha}{(1+\alpha)} \int_0^\tau \partial^\nu ( (1+\beta)^{-1} [\partial_\tau, \Lambda\mathscr{A} \nabla \beta \times] \uptheta) d\tau' \notag \\
&+\int_0^\tau \frac{1}{\mu(\tau')} \int_0^{\tau'} \mu(\tau'') \partial^\nu([\partial_\tau, \text{Curl}_{\Lambda\mathscr{A}}] {\bf V} ) d\tau'' d \tau' \notag \\
&-\frac{1}{1+\alpha} \int_0^{\tau} \frac{1}{\mu(\tau')} \int_0^{\tau'} \mu(\tau '') \partial^\nu([\partial_\tau,\Lambda \mathscr{A} y \times ] \mathbf{V} ) d\tau '' d \tau '  \notag \\
& - \frac{\alpha}{(1+\alpha)} \int_0^\tau \frac{1}{\mu(\tau')} \int_0^{\tau'} \mu(\tau'') \partial^\nu((1+\beta)^{-1}[\partial_\tau,\Lambda \mathscr{A} \nabla \beta \times ] \mathbf{V} )d \tau'' d \tau' \notag \\
& - \int_0^\tau \frac{2}{\mu(\tau')} \int_0^{\tau '} \mu(\tau'') \, \partial^\nu(\text{Curl}_{\Lambda\mathscr{A}}(\Gamma^\ast{\bf V})) d\tau'' d \tau' \notag \\
& + \frac{2}{1+\alpha} \int_0^{\tau} \frac{1}{\mu(\tau')} \int_0^{\tau '} \mu(\tau'') \, \partial^\nu(\Lambda\mathscr{A} y \times (\Gamma^\ast \mathbf{V}) ) d\tau '' d \tau ' \notag \\
& + \frac{2 \alpha}{1+\alpha} \int_0^\tau \frac{1}{\mu(\tau')} \int_0^{\tau'} \mu(\tau'') \, \partial^\nu((1+\beta)^{-1} \Lambda\mathscr{A} \nabla \beta \times (\Gamma^\ast \mathbf{V})) d\tau'' d \tau' \notag \\
& +\frac{\overline{C}}{1+\alpha} \int_0^\tau \frac{1}{\mu} \int_0^{\tau'} \mu^{1-\delta-\sigma} \partial^\nu( \Lambda y \times  \Lambda \uptheta) d \tau '' d \tau ' \notag \\ 
&-\frac{\overline{C}}{1+\alpha} \int_0^\tau \frac{1}{\mu}\int_0^{\tau'} \mu^{1-\delta-\sigma} \partial^\nu(\Lambda \mathscr{A}[D \uptheta]y \times \Lambda \eta) d \tau '' d \tau ' \notag \\
&+\frac{\alpha \overline{C}}{1+\alpha} \int_0^\tau \frac{1}{\mu} \int_0^{\tau'} \mu^{1-\delta-\sigma} \partial^\nu((1+\beta)^{-1} (\Lambda \nabla \beta \times  \Lambda \uptheta)) d \tau'' d\tau' \notag \\
&-\frac{\alpha \overline{C}}{1+\alpha} \int_0^{\tau} \frac{1}{\mu} \int_0^{\tau'} \mu^{1-\delta-\sigma} \partial^\nu ((1+\beta)^{-1} (\Lambda \mathscr{A} \nabla \beta \times \Lambda y)) d \tau'' d\tau' \label{E:CURLTHETAHIGHORDERDERV}
\end{align}
We take the $ \| \cdot \|^2$ norm of (\ref{E:CURLTHETAHIGHORDERDERV}), and if $|\nu| = N$ then multiply by $[\mu(\tau)]^{-\delta}$, and then estimate the right hand side. As can be seen from (\ref{E:CURLTHETAHIGHORDERDERV}) and (\ref{E:CURLVHIGHORDERDERV}), many terms and hence estimates are similar. Therefore we give the key estimates below and remark similar arguments to these and the proof of (\ref{E:CURLVBOUNDGAMMAGREATER5OVER3}) will hold for the other terms.

For the second term on the right hand side of (\ref{E:CURLTHETAHIGHORDERDERV}), $\frac{1}{1+\alpha} \partial^\nu(\Lambda \mathscr{A} y \times \uptheta)$,
\begin{equation}
\partial^\nu (\Lambda \mathscr{A} y \times \uptheta) = \Lambda y \partial^\nu (\mathscr{A} \uptheta) + \mathscr{R},
\end{equation}
where $\mathscr{R}$ are favorable remainder terms. Note 
\begin{equation}
\partial^\nu (\mathscr{A} \uptheta) = \mathscr{A} \partial^\nu \uptheta + \partial^{\nu-1}((D^2 \uptheta) \uptheta)
\end{equation}
Then
\begin{align}
\| \Lambda y \mathscr{A} \partial^\nu \uptheta \|^2 &\lesssim \| \int_0^\tau y \partial^\nu \partial_\tau \uptheta d \tau' + y \partial^\nu \uptheta(0) \|^2 \notag \\
&\lesssim \int_{\mathbb{R}^3} |\int_0^\tau (1+K \tau') \partial^\nu \partial_\tau \uptheta d \tau' |^2 + |\partial^\nu \uptheta(0)|^2 \, dy \notag \\
&\lesssim \int_{\mathbb{R}^3} |\int_0^\tau (1+K \tau')\mu^{-\sigma/4} \mu^{-\sigma/4} \mu^{\sigma/2} \partial^\nu \partial_\tau \uptheta d \tau' |^2 dy + \mathcal{S}^N(0) \notag \\
& \lesssim \int_{\mathbb{R}^3} (\int_0^\tau (1+K \tau')^2 e^{-\mu_0 \tau'} d \tau')(\int_0^\tau e^{-\mu_0 \tau'} \mu^{\sigma} |\partial^\nu \partial_\tau \uptheta|^2 d \tau') dy + \mathcal{S}^N(0) \notag \\
& \lesssim \int_0^\tau e^{-\mu_0 \tau'} \mathcal{S}^N d \tau' + \mathcal{S}^N(0).
\end{align}
Next,
\begin{align}
\| \Lambda y \partial^{\nu-1}((D^2 \uptheta) \uptheta) \|^2 &\lesssim \mathcal{S}^N \| y \partial^{\nu_\ell} \uptheta \|^2_{L^{\infty}(\mathbb{R}^3)} \text{ where } |\nu_\ell| \leq \frac{N}{2} \notag \\
&\lesssim ( \| y \partial^{\nu_\ell} \uptheta(0) \|_{L^{\infty}}+\int_0^\tau \| y \partial^{\nu_\ell} \partial_\tau \uptheta \|_{L^\infty} )^2 \notag \\
&\lesssim \mathcal{S}^N(0) + (\int_0^\tau (1+K \tau') e^{-\mu_0 \tau'} \sqrt{\mathcal{S}^N})^2 \notag \\
&\lesssim \mathcal{S}^N(0) + (\int_0^\tau (1+K \tau') e^{-\mu_0 \tau'} d \tau')(\int_0^\tau e^{-\mu_0 \tau'} \mathcal{S}^N d \tau') \notag \\
&\lesssim \mathcal{S}^N(0) + \int_0^\tau e^{-\mu_0 \tau'} \mathcal{S}^N d \tau'
\end{align}
Thus, since top order here with $\mu^{-\delta}$ is simple,
\begin{equation}
\mathcal{S}^N(0)+ \int_0^\tau e^{-\mu_0 \tau'} \mathcal{S}^N d \tau'
\gtrsim 
\begin{dcases}
\|\frac{1}{1+\alpha} \partial^\nu(\Lambda \mathscr{A} y \times \uptheta)\|^2  &\text{if } \ |\nu| \leq N-1  \\
\mu^{-\delta} \|\frac{1}{1+\alpha} \partial^\nu(\Lambda \mathscr{A} y \times \uptheta)\|^2 &\text{if } \ |\nu| = N
\end{dcases}
\end{equation}
For the eleventh term on the right hand side of (\ref{E:CURLTHETAHIGHORDERDERV}), $\frac{1}{1+\alpha}\int_0^\tau \partial^\nu( [\partial_\tau,\Lambda \mathscr{A} y \times] \uptheta)$, note
\begin{equation}
\partial^\nu([\partial_\tau,\Lambda \mathscr{A} y \times] \uptheta) = y \partial^\nu (\partial_\tau (\Lambda \mathscr{A}) \uptheta) + \mathscr{R},
\end{equation}
where $\mathscr{R}$ are favorable remainder terms. Note $\partial_\tau(\Lambda \mathscr{A}) = (\partial_\tau \Lambda) \mathscr{A} + \Lambda (\partial_\tau \mathscr{A})$. Also $\partial^\nu (\mathscr{A} \uptheta) = \partial^{\nu-1} ((D^2 \uptheta) \uptheta) + \mathscr{A} \partial^\nu(\uptheta)$. Then
\begin{align}
& \| \int_0^\tau y \partial_{\tau} \Lambda \partial^{\nu-1}((D^2 \uptheta) \uptheta) d \tau' \|^2 \notag \\
& \lesssim \int_{\mathbb{R}^3} | \int_0^\tau (1+K \tau') e^{-\frac{\mu_0}{2} \tau'}  e^{-\frac{\mu_0}{2} \tau'} \partial^{\nu_h} \uptheta \partial^{\nu_\ell} \uptheta d \tau' |^2 dy \quad \left( \text{where } |\nu_h| \geq \frac{N}{2}+1, |\nu_\ell| \leq \frac{N}{2} \right) \notag \\
& \lesssim \|\partial^{\nu_\ell} \uptheta \|^2_{L^\infty} \int_{\mathbb{R}^3} (\int_0^\tau e^{-\mu_0 \tau'} | \partial^{\nu_h} \uptheta | ^2 d \tau') (\int_0^\tau e^{-\mu_0 \tau'} (1+K \tau')^2 d \tau') dy \notag \\
&\lesssim \int_0^\tau e^{-\mu_0 \tau'} \mathcal{S}^N d \tau'.
\end{align} 
Similar for $y \partial_\tau \Lambda \mathscr{A} \partial^\nu \uptheta$, except just bound $\mathscr{A}$. Next,
\begin{equation}
\partial^\nu(\partial_\tau \mathscr{A} \uptheta) = (\partial^\nu (\partial_\tau D \uptheta))\uptheta+\partial^{\nu-1}([\partial_\tau D \uptheta][D \uptheta]).
\end{equation}
For $|\nu| \leq N-1$,
\begin{align}
& \| \int_0^\tau \Lambda y \partial^{\nu}(\partial_\tau D \uptheta) \uptheta d \tau' \|^2  \lesssim \int_{\mathbb{R}^3} | \int_0^\tau (1+K \tau') e^{-\frac{\mu_0}{2} \tau'}  e^{-\frac{\mu_0}{2} \tau'} e^{\mu_0 \tau'}(\partial^{\nu+1} \partial_\tau \uptheta )  \uptheta d \tau' |^2 dy \notag \\
& \lesssim \| \uptheta \|^2_{L^\infty} \int_{\mathbb{R}^3} (\int_0^\tau e^{-\mu_0 \tau'} e^{2 \mu_0 \tau'} |\partial^{\nu+1} \partial_\tau \uptheta|^2 d \tau')(\int_0^\tau e^{-\mu_0 \tau'}(1+K \tau')^2 d \tau') dy \notag \\
& \lesssim \int_0^\tau e^{-\mu_0 \tau'} \mathcal{S}^N d \tau'.
\end{align}
For $\nu = N$ we integrate by parts in $\tau$
\begin{align}
\int_0^\tau \Lambda y \uptheta \partial_\tau (\partial^{\nu+1} \uptheta) d \tau' & = \Lambda y \uptheta \partial^{\nu+1} \uptheta|_0^\tau - \int_0^\tau \Lambda y (\partial_\tau \uptheta) \partial^{\nu+1} \uptheta d \tau' - \int_0^\tau (\partial_\tau \Lambda) y \uptheta \partial^{\nu+1} \uptheta d \tau' \notag \\
&= (I)+(II)+(III).
\end{align}
Note
\begin{align}
\mu^{-\delta} \| (I) \|^2 &\lesssim \mathcal{S}^N(0) + \mu^{-2} \int_{\mathbb{R}^3} | y \uptheta |^2 | \partial^{\nu+1} \uptheta |^2 dy \notag \\
& \lesssim \mathcal{S}^N(0) + \mathcal{S}^N \|y \uptheta \|^2_{L^\infty} \notag \\
& \lesssim \mathcal{S}^N(0) + \mathcal{S}^N ( \| y \uptheta(0) \|_{L^\infty} + \int_0^\tau \| y \partial_\tau \uptheta \|_{L^\infty} d \tau')^2 \notag \\
& \lesssim \mathcal{S}^N(0) + \mathcal{S}^N (\mathcal{S}^N(0) + (\int_0^\tau e^{-\mu_0 \tau'} \sqrt{\mathcal{S}^N} (1+K \tau') d \tau')^2) \notag \\
& \lesssim \mathcal{S}^N(0) + \mathcal{S}^N  (\mathcal{S}^N(0) + (\int_0^\tau e^{-\mu_0 \tau'} (1+K \tau')^2 d \tau')(\int_0^\tau e^{-\mu_0 \tau'} \mathcal{S}^N  d \tau')) \notag \\
& \lesssim \mathcal{S}^N(0) + \int_0^\tau e^{-\mu_0 \tau'} \mathcal{S}^N d \tau'.
\end{align}
For $(II)$, let $\varsigma>0$ be such that $2\varsigma < \mu_0.$ Then,
\begin{align}
\mu^{-\delta} \| (II) \|^2 &=\mu^{-\delta} \int_{\mathbb{R}^3} | \int_0^\tau \Lambda y \partial_\tau \uptheta \partial^{\nu+1} \uptheta d \tau'|^2 \notag \\
&\lesssim \mu^{-\delta} \int_{\mathbb{R}^3} (\int_0^\tau e^{2 \varsigma \tau'} | y \partial_\tau \uptheta |^2 d \tau')(\int_0^\tau e^{-2\varsigma \tau'} |\partial^{\nu+1} \uptheta|^2 d \tau') dy \notag \\
&\lesssim \int_0^\tau e^{2 \varsigma \tau'} \|y \partial_\tau \uptheta\|_{L^\infty}^2 d \tau'  \mathcal{S}^N \int_0^\tau e^{-2 \varsigma \tau'} d \tau' \notag \\
&\lesssim  \int_0^\tau e^{2 \varsigma \tau'} (1+K \tau')^2 e^{-\mu_0 \tau'} e^{-\mu_0 \tau'} \mathcal{S}^N d \tau' \notag \\
&\lesssim  \int_0^\tau e^{-\mu_0 \tau'} \mathcal{S}^N d \tau'.
\end{align}
Now $(III)$ is similar to other estimates as well as top order $\mu^{-\delta}$ inside integral argument. Also $\partial^{\nu-1}([\partial_\tau D \uptheta][D \uptheta])$ estimate is similar to other estimates. Hence
\begin{equation}
\mathcal{S}^N(0)+\int_0^\tau e^{-\mu_0 \tau'} \mathcal{S}^N d \tau'
\gtrsim
\begin{dcases}
\|\frac{1}{1+\alpha}\int_0^\tau \partial^\nu( [\partial_\tau,\Lambda \mathscr{A} y \times] \uptheta)\|^2  &\text{if } \ |\nu| \leq N-1  \\
\mu^{-\delta} \|\frac{1}{1+\alpha}\int_0^\tau \partial^\nu( [\partial_\tau,\Lambda \mathscr{A} y \times] \uptheta)\|^2 &\text{if } \ |\nu| = N
\end{dcases}
\end{equation}
For the fourteenth term on the right hand side of (\ref{E:CURLTHETAHIGHORDERDERV}) which is as follows
$$-\frac{1}{1+\alpha} \int_0^{\tau} \frac{1}{\mu(\tau')} \int_0^{\tau'} \mu(\tau '') \partial^\nu([\partial_\tau,\Lambda \mathscr{A} y \times ] \mathbf{V} ) d\tau '' d \tau ' $$
we first note
\begin{equation}
\partial^\nu([\partial_\tau,\Lambda \mathscr{A} y \times]\partial_\tau \uptheta) = y \partial^\nu (\partial_\tau (\Lambda \mathscr{A}) \partial_\tau \uptheta) + \mathscr{R},
\end{equation}
where $\mathscr{R}$ are favorable remainder terms. Then
\begin{align}
&\| \int_0^\tau \frac{1}{\mu} \int_0^{\tau'} \mu y \partial^\nu (\partial_\tau (\Lambda \mathscr{A}) \partial_\tau \uptheta) d \tau'' d \tau' \|^2 \notag \\
& \lesssim \int_{\mathbb{R}^3} | \int_0^\tau \frac{(1+K \tau')(1+\tau')}{\mu^{1/2}} \frac{1}{(1+\tau')\mu^{1/2}} \int_0^{\tau'} \mu \partial^\nu (\partial_\tau (\Lambda \mathscr{A}) \partial_\tau \uptheta) d \tau'') d \tau' | dy \notag \\
& \lesssim \int_{\mathbb{R}^3} [ (\int_0^\tau \frac{(1+K \tau')^2(1+\tau')^2}{\mu} d \tau') \int_0^\tau \frac{1}{(1+\tau')^2 \mu} (\int_0^{\tau'} \mu \partial^{\nu} (\partial_\tau (\Lambda \mathscr{A}) \partial_\tau \uptheta) d \tau'')^ 2 ] dy \notag \\
& \lesssim \int_0^\tau \frac{1}{(1+\tau')^2 \mu} \| \int_0^\tau \mu \partial^\nu (\partial_\tau (\Lambda \mathscr{A}) \partial_\tau \uptheta) d \tau''\|^2 d \tau'.
\end{align}
Then by a similar argument to other estimates,
\begin{equation}
\int_0^\tau e^{-\mu_0 \tau'} \mathcal{S}^N d \tau'
\gtrsim
\begin{dcases}
\|\frac{1}{1+\alpha} \int_0^{\tau} \frac{1}{\mu(\tau')} \int_0^{\tau'} \mu(\tau '') \partial^\nu([\partial_\tau,\Lambda \mathscr{A} y \times ] \mathbf{V} ) d\tau '' d \tau '\|^2  &\text{if } \ |\nu| \leq N-1,  \\
\mu^{-\delta} \|\frac{1}{1+\alpha} \int_0^{\tau} \frac{1}{\mu(\tau')} \int_0^{\tau'} \mu(\tau '') \partial^\nu([\partial_\tau,\Lambda \mathscr{A} y \times ] \mathbf{V} ) d\tau '' d \tau '\|^2 &\text{if } \ |\nu| = N.
\end{dcases}
\end{equation}
For the nineteenth term on the right hand side of (\ref{E:CURLTHETAHIGHORDERDERV}), $\frac{\overline{C}}{1+\alpha} \int_0^\tau \frac{1}{\mu} \int_0^{\tau'} \mu^{1-\delta-\sigma} \partial^\nu( \Lambda y \times  \Lambda \uptheta) d \tau '' d \tau ' $, we fix $0<X<1-\frac{\sigma}{4}$ sufficiently small so that $\frac{\sigma}{4}+\frac{1}{2}+\frac{\delta}{2}-X > 0$. Then fix $0<Y<\frac{\sigma}{4}+\frac{1}{2}+\frac{\delta}{2}-X$. Then
\begin{align}
&\| \int_0^\tau \frac{\overline{C}}{2 \mu} \int_0^{\tau'} \mu^{1-\delta-\sigma} (\partial^\nu (\Lambda y \times \Lambda \uptheta)) d \tau'' d \tau' \|^2 \notag \\
& \lesssim \int_{\mathbb{R}^3} ( \int_0^\tau \frac{1}{\mu^{2X}} d \tau')\int_0^\tau \frac{1}{\mu^{2-2X}} (\int_0^{\tau'} \mu^{1-\delta-\sigma} (\partial^\nu (\Lambda y \times \Lambda \uptheta)) d \tau'')^2 d\tau' dy \notag \\
& \lesssim \int_0^\tau \frac{1}{\mu^{2-2X}} \| \int_0^{\tau'} \mu^{1-\delta-\sigma} \partial^{\nu}(\Lambda y \times \Lambda \uptheta) d \tau'' \|^2 d \tau' \notag \\
&\lesssim \int_0^\tau e^{-\mu_0 \tau'} \mathcal{S}^N,
\end{align}
where we conclude the final bound by a similar argument to other estimates since
\begin{align}
\frac{1}{\mu^{2-2X}} |\ \int_0^{\tau}  \mu^{1-\delta-\sigma} (1+K \tau)^2 d \tau|^2 & \lesssim e^{ (2X-2) \mu_1 \tau } \Big| \int_0^{\tau} e^{(1-\delta-\sigma+Y)\mu_1 \tau'} d \tau' \Big|^2 \notag \\
&\lesssim e^{(2X-2)\mu_1\tau} (e^{2(1-\delta-\sigma+Y) \mu_1 \tau}+1) \notag \\
& \lesssim e^{- \mu_0\tau}.
\end{align}   
Hence
\begin{equation}
\int_0^\tau e^{-\mu_0 \tau'} \mathcal{S}^N d \tau'
\gtrsim
\begin{dcases}
\| \frac{\overline{C}}{1+\alpha} \int_0^\tau \frac{1}{\mu} \int_0^{\tau'} \mu^{1-\delta-\sigma} \partial^\nu( \Lambda y \times  \Lambda \uptheta) d \tau '' d \tau ' \|^2  &\text{if } \ |\nu| \leq N-1  \\
\mu^{-\delta} \|\frac{\overline{C}}{1+\alpha} \int_0^\tau \frac{1}{\mu} \int_0^{\tau'} \mu^{1-\delta-\sigma} \partial^\nu( \Lambda y \times  \Lambda \uptheta) d \tau '' d \tau ' \|^2 &\text{if } \ |\nu| = N
\end{dcases}
\end{equation}
For the twentieth term on the right hand side of (\ref{E:CURLTHETAHIGHORDERDERV}), $-\frac{\overline{C}}{1+\alpha} \int_0^\tau \frac{1}{\mu}\int_0^{\tau'} \mu^{1-\delta-\sigma} \partial^\nu(\Lambda \mathscr{A}[D \uptheta]y \times \Lambda \eta) d \tau '' d \tau ' $, first
\begin{equation}
\Lambda \mathscr{A} [D \uptheta] y \times \Lambda \eta = \Lambda \mathscr{A} [D \uptheta] y \times \Lambda \uptheta+ \Lambda \mathscr{A} [D \uptheta] y \times \Lambda y.
\end{equation}
Then a similar argument to above holds except at top order $|\nu|=N$. At top order order, $|\nu|=N$, note, with $X$ defined as above,
\begin{align}
&\mu^{-{\delta}} \| \int_0^\tau \frac{\overline{C}}{2\mu}  \int_0^{\tau'} \mu^{1-\delta-\sigma} (\partial^\nu (\Lambda y \times \Lambda \uptheta)) d \tau'' d \tau' \|^2 \notag \\
&\lesssim \mu^{-{\delta}} \int_{\mathbb{R}^3} [\int_0^\tau \frac{1}{\mu(\tau')^{2X}} d \tau' \int_0^\tau \frac{1}{\mu(\tau')^{2-2X}} (\int_0^{\tau'} \mu^{1-\delta-\sigma} (\partial^\nu (\Lambda y \times \Lambda \uptheta)) d \tau'')^2 d \tau' dy \notag \\
&\lesssim \sup_{0 \leq \tau' \leq \tau} [\mu(\tau')^{-\delta}] \int_0^\tau \frac{\mu^{-\delta}}{\mu^{-\delta}\mu^{2-2X}} \| \int_0^{\tau'} \mu^{1-\delta-\sigma} (\partial^\nu (\Lambda y \times \Lambda \uptheta)) d \tau''\|^2 d \tau' \notag \\
&\lesssim \sup_{0 \leq \tau' \leq \tau} [\mu(\tau')^{-\delta}] \sup_{0 \leq \tau' \leq \tau} [ \frac{1}{\mu(\tau')^{-\delta}}] \int_0^{\tau} \frac{1}{\mu^{2-2X}} \mu^{-\delta}  \| \int_0^{\tau'} \mu^{1-\delta-\sigma} (\partial^\nu (\Lambda y \times \Lambda \uptheta)) d \tau''\|^2 d \tau' \notag \\
&=\int_0^{\tau} \frac{1}{\mu^{2-2X}} \mu^{-\delta}  \| \int_0^{\tau'} \mu^{1-\delta-\sigma} (\partial^\nu (\Lambda y \times \Lambda \uptheta)) d \tau''\|^2 d \tau' \notag \\
&\lesssim \int_0^\tau e^{-\mu_0 \tau'} \mathcal{S}^N d \tau',
\end{align}
where the last bound follows from a similar argument to above, where we include $\mu^{-\delta}$ in $\mathcal{S}^N$ if needed. Hence
\begin{equation}
\int_0^\tau e^{-\mu_0 \tau'} \mathcal{S}^N d \tau'
\gtrsim
\begin{dcases}
\|\frac{\overline{C}}{1+\alpha} \int_0^\tau \frac{1}{\mu}\int_0^{\tau'} \mu^{1-\delta-\sigma} \partial^\nu(\Lambda \mathscr{A}[D \uptheta]y \times \Lambda \eta) d \tau '' d \tau '\|^2  &\text{if } \ |\nu| \leq N-1,  \\
\mu^{-\delta} \|\frac{\overline{C}}{1+\alpha} \int_0^\tau \frac{1}{\mu}\int_0^{\tau'} \mu^{1-\delta-\sigma} \partial^\nu(\Lambda \mathscr{A}[D \uptheta]y \times \Lambda \eta) d \tau '' d \tau '\|^2 &\text{if } \ |\nu| = N.
\end{dcases}
\end{equation}
Finally for the last term on the right hand side of (\ref{E:CURLTHETAHIGHORDERDERV}), using a combination of the $H^2 (\mathbb{R}^3) \hookrightarrow L^\infty (\mathbb{R}^3)$ embedding and $\| \beta \|^2_{H^{N+1,\infty}(\mathbb{R}^3)} \leq \lambda$, a similar argument, without applying finite propagation, to other estimates and then (\ref{E:SOURCETERMHIGHORDERENERGYESTIMATE}) gives
\begin{align}
& \lambda + \int_0^\tau e^{-\mu_0 \tau'} \mathcal{S}^N d \tau' \notag \\
& \gtrsim
\begin{dcases}
\|\frac{\alpha \overline{C}}{1+\alpha} \int_0^{\tau} \frac{1}{\mu} \int_0^{\tau'} \mu^{1-\delta-\sigma} \partial^\nu ((1+\beta)^{-1} (\Lambda \mathscr{A} \nabla \beta \times \Lambda y)) d \tau'' d\tau'\|^2  &\text{if } \ |\nu| \leq N-1,  \\
\mu^{-\delta} \|\frac{\alpha \overline{C}}{1+\alpha} \int_0^{\tau} \frac{1}{\mu} \int_0^{\tau'} \mu^{1-\delta-\sigma} \partial^\nu ((1+\beta)^{-1} (\Lambda \mathscr{A} \nabla \beta \times \Lambda y)) d \tau'' d\tau'\|^2 &\text{if } \ |\nu| = N.
\end{dcases}
\end{align}
This concludes the proof of (\ref{E:CURLTHETABOUNDGAMMAGREATER5OVER3}).
\end{proof}

\section{Energy Inequality and Proof of the Main Theorem}\label{S:CONTINUITY}
By the Local Well-Posedness Theorem \ref{T:LWPGAMMALEQ5OVER3} there exists a unique solution to (\ref{E:THETAEQNLINEARENERGYFUNCTION}) on some time interval $[0,T^*], T^*>0.$ From Propositions \ref{P:ENERGYESTIMATE} and \ref{P:CURLESTIMATES} with $\kappa>0$ chosen small enough and by using the equivalence of the norm $\mathcal{S}^N$ and the modified energy $\mathcal{E}^N+\mathcal{C}^{N-1}$ as well as a similar argument to (\ref{E:SOURCETERMHIGHORDERENERGYESTIMATE}), we conclude that there exist universal constants $c_1,c_2,c_3,c_4\geq 1$ such that for any $0 \leq \tau^* \leq \tau \leq T$
\begin{equation}\label{E:ENERGYBOUND}
\mathcal{S}^N(\tau;\tau^*) \leq c_1 \mathcal{S}^N(\tau^*)+c_2 \lambda +c_3 ( \mathcal{S}^N(0)+\mathcal{B}^N[{\bf V}](0) ) + c_4 \int_{\tau^*}^\tau e^{-\frac{\mu_0}{2} \tau'} \mathcal{S}^N (\tau';\tau^*) d \tau'.
\end{equation}
Here $\mathcal{S}^N(\tau;\tau^*)$ denotes the sliced norm of $\mathcal{S}^N$ from $\tau^*$ to $\tau$ with $\sup_{0\le \tau' \le\tau}$ replaced by $\sup_{\tau^*\le \tau' \le\tau}\,$. By a standard well-posedness estimate, we deduce that the time of existence $T$ is inversely proportional to the size of the initial data, i.e.: $T\sim (\mathcal{S}^N(0)+\mathcal{B}^N[{\bf V}](0))^{-1}$. Choose $\varepsilon>0$ so small that the time of existence $T$ satisfies 
\begin{equation}\label{E:LOCALTIME}
e^{-\mu_0 T/4} \leq \frac{\kappa \mu_0}{c_4}, \ \ \sup_{\tau \leq T}\mathcal{S}^N(\tau) \leq c\left(\mathcal{S}^N(0)+\mathcal{B}^N[{\bf V}](0)+\lambda \right)
\end{equation}
where $c>0$ is a universal constant provided by the local-in-time well-posedness theory. Let
\begin{equation}
C_\ast= 3(c_1 c + c_2 + c_3).
\end{equation}
Define 
\begin{equation}
\mathcal T : = \sup_{\tau\geq0}\{\text{ solution to (\ref{E:THETAEQNLINEARENERGYFUNCTION}) exists on $[0,\tau)$ and} \ \mathcal{S}^N(\tau)\leq  C_\ast \left(\mathcal{S}^N(0)+\mathcal{B}^N[{\bf V}](0)+\lambda\right)\} . 
\end{equation}
Observe that $\mathcal T\geq T$ since $c \leq C_\ast$. Letting $\tau^*=\frac T2$ in (\ref{E:ENERGYBOUND}) for any $\tau\in[\frac T2,\mathcal T]$ we obtain
\begin{equation}
\mathcal{S}^N(\tau; \frac T2) \leq c_1 \mathcal{S}^N(\frac T2)+c_2 \lambda + c_3\left(\mathcal{S}^N(0)+\mathcal{B}^N[{\bf V}](0)\right) +  c_4 \int_{\frac T2}^\tau e^{-\frac{\mu_0}{2}\tau'}\mathcal{S}^N(\tau';\frac{T}{2})\,d\tau'.
\end{equation}
Therefore, using (\ref{E:LOCALTIME}) we conclude that for any $\tau\in[\frac T2,\mathcal T]$
\begin{align}
\mathcal{S}^N(\tau; \frac T2) & \leq  c_1\mathcal{S}^N(\frac T2)+c_2 \lambda+c_3\left(\mathcal{S}^N(0)+\mathcal{B}^N[{\bf V}](0)\right)   + \frac{c_4}{\mu_0}e^{-\mu_0 T/4}\mathcal{S}^N(\tau;\frac{T}{2}) \notag \\
& \le c_1\mathcal{S}^N(\frac T2)+c_2 \lambda+c_3\left(\mathcal{S}^N(0)+\mathcal{B}^N[{\bf V}](0)\right)   +\kappa\mathcal{S}^N(\tau;\frac{T}{2}). \label{E:CONT1}
\end{align}
Since by (\ref{E:LOCALTIME}), $\mathcal{S}^N(\frac T2)\leq c\left(\mathcal{S}^N(0)+\mathcal{B}^N[{\bf V}](0)+\lambda \right)$ we conclude from (\ref{E:CONT1}) that
\begin{equation}
\mathcal{S}^N(\tau; \frac T2) \leq c_1 c \left( \mathcal{S}^N(0)+\mathcal{B}^N[{\bf V}](0) + \lambda \right) + c_2 \lambda+c_3\left(\mathcal{S}^N(0)+\mathcal{B}^N[{\bf V}](0)\right) +\kappa\mathcal{S}^N(\tau;\frac{T}{2})
\end{equation}
which for sufficiently small $\kappa$ gives
\begin{equation}
\mathcal{S}^N(\tau; \frac T2) \leq 2(c_1 c + c_2 + c_3) \left( \mathcal{S}^N(0)+\mathcal{B}^N[{\bf V}](0) + \lambda \right) < C_\ast \left(\mathcal{S}^N(0)+\mathcal{B}^N[{\bf V}](0) + \lambda\right),
\end{equation}
and hence
\begin{equation}\label{E:SNCSTARBOUND}
\mathcal{S}^N(\tau)< C_\ast \left(\mathcal{S}^N(0)+\mathcal{B}^N[{\bf V}](0)+ \lambda\right).
\end{equation}
It is now easy to check the a priori bounds in (\ref{E:APRIORI}) are in fact improved. 
For instance, by the fundamental theorem of calculus 
\begin{equation}
\|D\uptheta\|_{W^{1,\infty}} = \|\int_0^\tau D{\bf V}\|_{W^{1,\infty}} \le \int_0^\tau e^{-\mu_0\tau'} \mathcal{S}^N(\tau')\,d\tau' \lesssim \varepsilon <\frac16, \ \ \tau\in[0,\mathcal T) 
\end{equation}
for $\varepsilon>0$ small enough. Similar arguments apply to the remaining a priori assumptions. 

From the continuity of the map $\tau\mapsto \mathcal{S}^N(\tau')$ and the definition of $\mathcal T$ we conclude that $\mathcal T = \infty$ and the solution to (\ref{E:THETAEQNLINEARENERGYFUNCTION}) exists globally-in-time.  Furthermore, also using Proposition \ref{P:CURLESTIMATES}, the global bound (\ref{E:GLOBALBOUND}) follows.

\section*{Acknowledgments}
C. Rickard acknowledges the support of the NSF grant DMS-1608494 and the NSF grant DMS-1613135.

\appendix
\renewcommand{\theequation}{\Alph{section}.\arabic{equation}}
\setcounter{theorem}{0}\renewcommand{\theorem}{\Alph{section}.\??arabic{prop}}

\section{Energy Identities}
We give key identities which will be used in our estimates. First from Lemma 4.3 \cite{1610.01666} we have the following modified energy identity:
\begin{lemma} \label{L:KEYLEMMA}
Recalling the matrix quantities introduced in (\ref{E:LAMBDADECOMP}) and (\ref{E:NNU}), the following identity holds
\begin{equation}
 \Lambda_{\ell j}(\nabla_\eta\partial^\nu\uptheta)^i_j\Lambda^{-1}_{im}(\nabla_\eta\partial^\nu\uptheta)^m_{\ell,\tau}  =  \frac12\frac{d}{d\tau}\left(\sum_{i,j=1}^3 d_id_j^{-1}([\mathscr N_\nu]^j_{i})^2\right) + \mathcal T_{\nu}. \label{E:KEYTWO}
\end{equation}
where the error term $\mathcal T_{\nu}$ is given as follows
\begin{equation}
\mathcal T_{\nu} = -  \frac12\sum_{i,j=1}^3\frac d{d\tau}\left(d_id_j^{-1}\right)([\mathscr N_\nu]^j_i)^2 
 -  \text{{\em Tr}}\left(Q\mathscr N_\nu Q^{-1}\left(\partial_\tau P P^\top \mathscr N_\nu^\top + \mathscr N_\nu^\top P\partial_\tau P^\top\right)\right). \label{E:TNU}
\end{equation}
\end{lemma}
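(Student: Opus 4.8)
The identity \eqref{E:KEYTWO} is purely algebraic and is the content of Lemma~4.3 of \cite{1610.01666}; the plan is to indicate how it is obtained from the spectral decomposition \eqref{E:LAMBDADECOMP} and the definition \eqref{E:NNU}. Write $G(\tau):=\nabla_\eta\partial^\nu\uptheta(\tau)$, viewed as a $3\times 3$ matrix with entries $G^i_j$. Since $\Lambda$ is symmetric, $\Lambda_{\ell j}G^i_j=(G\Lambda)^i_\ell$ and $\Lambda^{-1}_{im}\partial_\tau G^m_\ell=(\Lambda^{-1}\partial_\tau G)^i_\ell$, so the left-hand side of \eqref{E:KEYTWO} equals
\begin{equation}
\Lambda_{\ell j}(\nabla_\eta\partial^\nu\uptheta)^i_j\Lambda^{-1}_{im}(\nabla_\eta\partial^\nu\uptheta)^m_{\ell,\tau}=\text{Tr}\!\left(\Lambda\,G^\top\Lambda^{-1}\,\partial_\tau G\right).
\end{equation}

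Next I would substitute $\Lambda=P^\top QP$, $\Lambda^{-1}=P^\top Q^{-1}P$ and $G=P^\top\mathscr N_\nu P$ (so $\mathscr N_\nu=PGP^\top$), using the orthogonality $P^\top P=PP^\top=I$ of $P\in\text{SO}(3)$ together with the cyclicity of the trace, to reduce the right-hand side to $\text{Tr}\!\left(Q\,\mathscr N_\nu^\top Q^{-1}\,P(\partial_\tau G)P^\top\right)$. Expanding $\partial_\tau G=(\partial_\tau P^\top)\mathscr N_\nu P+P^\top(\partial_\tau\mathscr N_\nu)P+P^\top\mathscr N_\nu(\partial_\tau P)$ and conjugating by $P$ gives
\begin{equation}
P(\partial_\tau G)P^\top=\partial_\tau\mathscr N_\nu+(P\partial_\tau P^\top)\mathscr N_\nu+\mathscr N_\nu(\partial_\tau P\,P^\top),
\end{equation}
and differentiating $PP^\top=I$ shows that $\partial_\tau P\,P^\top$ is antisymmetric with $P\partial_\tau P^\top=-\partial_\tau P\,P^\top$.

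Inserting this splits the trace into a diagonal part $\text{Tr}(Q\,\mathscr N_\nu^\top Q^{-1}\,\partial_\tau\mathscr N_\nu)$ and a rotation part collecting the $\partial_\tau P$ contributions. For the diagonal part, since $Q=\text{diag}(d_1,d_2,d_3)$ an index computation gives $\text{Tr}(Q\,\mathscr N_\nu^\top Q^{-1}\,\partial_\tau\mathscr N_\nu)=\sum_{i,j=1}^3 d_id_j^{-1}[\mathscr N_\nu]^j_i\,\partial_\tau[\mathscr N_\nu]^j_i$, and the Leibniz rule turns this into $\tfrac12\tfrac{d}{d\tau}\big(\sum_{i,j}d_id_j^{-1}([\mathscr N_\nu]^j_i)^2\big)-\tfrac12\sum_{i,j}\tfrac{d}{d\tau}(d_id_j^{-1})([\mathscr N_\nu]^j_i)^2$; the first term is the perfect derivative claimed in \eqref{E:KEYTWO} and the second is the first term of $\mathcal T_\nu$ in \eqref{E:TNU}. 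The rotation part $\text{Tr}\!\left(Q\,\mathscr N_\nu^\top Q^{-1}\,\big((P\partial_\tau P^\top)\mathscr N_\nu+\mathscr N_\nu(\partial_\tau P\,P^\top)\big)\right)$ is then reorganized, using $\text{Tr}(M)=\text{Tr}(M^\top)$, cyclicity of the trace, the symmetry of $Q$, and the antisymmetry of $\partial_\tau P\,P^\top$, into $-\text{Tr}\!\left(Q\,\mathscr N_\nu Q^{-1}\big(\partial_\tau P\,P^\top\mathscr N_\nu^\top+\mathscr N_\nu^\top P\partial_\tau P^\top\big)\right)$, which is exactly the second term of $\mathcal T_\nu$.

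I expect the main obstacle to be precisely this last reorganization: matching the rotation contribution to the exact transposed/cyclic form displayed in \eqref{E:TNU} requires keeping careful track of which slot of $\mathscr N_\nu$ carries the $\tau$-derivative and of the signs produced by the antisymmetry of $\partial_\tau P\,P^\top$. Everything else — the trace rewriting, the conjugation by $P$, and the Leibniz step — is routine linear algebra, and as the excerpt already indicates, the whole identity may alternatively be cited directly from Lemma~4.3 of \cite{1610.01666}.
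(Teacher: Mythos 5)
Your setup --- rewriting $\Lambda_{\ell j}G^i_j\Lambda^{-1}_{im}\partial_\tau G^m_\ell=\text{Tr}\left(\Lambda G^\top\Lambda^{-1}\partial_\tau G\right)$ for $G=\nabla_\eta\partial^\nu\uptheta$, substituting \eqref{E:LAMBDADECOMP}--\eqref{E:NNU}, reducing to $\text{Tr}\left(Q\mathscr N_\nu^\top Q^{-1}\,P(\partial_\tau G)P^\top\right)$, splitting $P(\partial_\tau G)P^\top=\partial_\tau\mathscr N_\nu+(P\partial_\tau P^\top)\mathscr N_\nu+\mathscr N_\nu(\partial_\tau P\,P^\top)$, and extracting the perfect derivative and the first term of \eqref{E:TNU} from the diagonal piece --- is exactly the natural route, and it goes further than the paper itself, which gives no proof beyond citing Lemma~4.3 of \cite{1610.01666}. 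The diagonal bookkeeping, including the index identification $[\mathscr N_\nu]^j_i$ and the Leibniz step, is correct.

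The final reorganization you flag as the main obstacle and then assert will go through is, however, a genuine gap. Your split leaves the rotation contribution
\[
-\,\text{Tr}\!\left(Q\,\mathscr N_\nu^\top Q^{-1}\left(\partial_\tau P\,P^\top\,\mathscr N_\nu+\mathscr N_\nu\,P\partial_\tau P^\top\right)\right),
\]
with $\mathscr N_\nu^\top$ conjugated by $Q$ on the outside and the untransposed $\mathscr N_\nu$ inside. That is not what \eqref{E:TNU} displays, and transposing and cycling with $W:=\partial_\tau P\,P^\top$ does not close the gap: your expression equals $\text{Tr}\!\left(Q^{-1}\mathscr N_\nu Q\,[\mathscr N_\nu^\top,W]\right)$, whereas \eqref{E:TNU} equals $\text{Tr}\!\left(Q\mathscr N_\nu Q^{-1}\,[\mathscr N_\nu^\top,W]\right)$, and $Q^{-1}\mathscr N_\nu Q\neq Q\mathscr N_\nu Q^{-1}$ for nonsymmetric $\mathscr N_\nu$ and nonscalar $Q$. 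A concrete check makes the mismatch vivid: freeze an instant with $\partial_\tau G=0$ and $\partial_\tau Q=0$, so that the left side of \eqref{E:KEYTWO} vanishes and the perfect-derivative term cancels exactly against the first term of $\mathcal T_\nu$; the claimed identity then forces the trace in \eqref{E:TNU} to equal your rotation contribution, yet $Q=\text{diag}(1,2,3)$ together with a generic nonsymmetric $\mathscr N_\nu$ and a generic antisymmetric $W$ gives two different numbers. So either \eqref{E:TNU} as printed carries a transposition slip relative to \cite{1610.01666}, or the rotation part must be matched against a form other than the one displayed; in either case the step you intend to leave as routine bookkeeping cannot be completed as stated, and that is precisely where the remaining work lies.
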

In the next Lemma, we give some useful results concerning our quantities $\mathscr{A}$, $\mathscr{J}$ and $\Lambda$
\begin{lemma}\label{L:USEFULIDENTITIESPREENERGYINEQUALITY}       
For $\mathscr{A}$, $\mathscr{J}$ and $\Lambda$, the following identities hold
\begin{align}
\mathscr{A}^k_j\mathscr{J}^{-\frac1\alpha} - \delta^k_j &= ( \mathscr{A}^k_j-\delta^k_j) \mathscr{J}^{-\frac1\alpha} + \delta^k_j (\mathscr{J}^{-\frac1\alpha} -1),  \label{E:AJIDENTITYENERGY} \\  
\mathscr{A}^k_j-\delta^k_j &= -\mathscr{A}^k_l  [D\uptheta]^l_j,  \label{E:AIDENTITYENERGY}\\
\Lambda_{ij}&=\Lambda_{ip}(\mathscr{A}^j_p +  \mathscr{A}^j_l\uptheta^l,_p), \label{E:LAMBDAIDENTITYENERGY} \\
1-\mathscr{J}^{-\frac1\alpha}&=\frac{1}{\alpha}\text{\emph{Tr}}[D\uptheta ] + O(|D\uptheta|^2), \label{E:JIDENTITYENERGY} \\
\mathscr{A}_j^k \mathscr{J}^{-\frac{1}{\alpha}} -\delta_j^k&=-\mathscr{A}_{\ell}^k[D \uptheta]^{\ell}_j \mathscr{J}^{-\frac{1}{\alpha}}-\delta_j^k(\frac{1}{\alpha}\text{\emph{Tr}}[D \uptheta ] + O(|D\uptheta|^2)). \label{E:EXPANDAJMINUSID}
\end{align}
\end{lemma}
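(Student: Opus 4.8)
```latex
\textbf{Proof plan for Lemma \ref{L:USEFULIDENTITIESPREENERGYINEQUALITY}.}
The plan is to verify each of the five identities \eqref{E:AJIDENTITYENERGY}--\eqref{E:EXPANDAJMINUSID} directly from the definitions of $\mathscr{A}=\mathscr{A}_\eta=[D\eta]^{-1}$, $\mathscr{J}=\mathscr{J}_\eta=\det[D\eta]$, and the relation $\eta=y+\uptheta$, so that $[D\eta]^k_j=\delta^k_j+[D\uptheta]^k_j$. None of these is deep; the only care needed is bookkeeping of indices and of the orders in $D\uptheta$. I would treat \eqref{E:AJIDENTITYENERGY} as a purely algebraic rearrangement: write $\mathscr{A}^k_j\mathscr{J}^{-1/\alpha}-\delta^k_j = (\mathscr{A}^k_j-\delta^k_j)\mathscr{J}^{-1/\alpha} + \delta^k_j\mathscr{J}^{-1/\alpha} - \delta^k_j$ and collect the last two terms, which is immediate.

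For \eqref{E:AIDENTITYENERGY} I would start from $\mathscr{A}^k_\ell[D\eta]^\ell_j=\delta^k_j$, substitute $[D\eta]^\ell_j=\delta^\ell_j+[D\uptheta]^\ell_j$, and solve for $\mathscr{A}^k_j-\delta^k_j = -\mathscr{A}^k_\ell[D\uptheta]^\ell_j$. Identity \eqref{E:LAMBDAIDENTITYENERGY} follows by contracting \eqref{E:AIDENTITYENERGY} (rearranged as $\delta^j_p=\mathscr{A}^j_p+\mathscr{A}^j_\ell[D\uptheta]^\ell_p = \mathscr{A}^j_p+\mathscr{A}^j_\ell\uptheta^\ell,_p$) with $\Lambda_{ip}$ and summing over $p$; here one uses that $\Lambda_{ij}=\Lambda_{ip}\delta^j_p$. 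For \eqref{E:JIDENTITYENERGY} I would use the Taylor/Jacobi expansion of a determinant near the identity: $\mathscr{J}=\det(\mathbf{Id}+D\uptheta)=1+\mathrm{Tr}[D\uptheta]+O(|D\uptheta|^2)$, hence $\mathscr{J}^{-1/\alpha}=1-\tfrac1\alpha\mathrm{Tr}[D\uptheta]+O(|D\uptheta|^2)$, which rearranges to \eqref{E:JIDENTITYENERGY}; the $O(|D\uptheta|^2)$ remainder is justified by the a priori bound $\|D\uptheta\|_{L^\infty}<\tfrac13$ in \eqref{E:APRIORI} keeping $D\eta$ uniformly invertible. Finally \eqref{E:EXPANDAJMINUSID} is obtained by inserting \eqref{E:AIDENTITYENERGY} and \eqref{E:JIDENTITYENERGY} into \eqref{E:AJIDENTITYENERGY}: the first term becomes $(\mathscr{A}^k_j-\delta^k_j)\mathscr{J}^{-1/\alpha}=-\mathscr{A}^k_\ell[D\uptheta]^\ell_j\mathscr{J}^{-1/\alpha}$ and the second becomes $\delta^k_j(\mathscr{J}^{-1/\alpha}-1)=-\delta^k_j(\tfrac1\alpha\mathrm{Tr}[D\uptheta]+O(|D\uptheta|^2))$.

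There is essentially no serious obstacle here; the statement is a collection of elementary identities recorded for later use. The only place warranting a sentence of justification is the determinant expansion underlying \eqref{E:JIDENTITYENERGY} and \eqref{E:EXPANDAJMINUSID}, where one should note that the $O(|D\uptheta|^2)$ terms are genuinely quadratic-and-higher in the entries of $D\uptheta$ with coefficients controlled uniformly because of the a priori smallness \eqref{E:APRIORI} (equivalently, because $\eta$ stays a diffeomorphism with $\mathscr{J}$ bounded away from $0$). I would present the proof as a short enumeration, deriving \eqref{E:AIDENTITYENERGY} first, then \eqref{E:LAMBDAIDENTITYENERGY}, then \eqref{E:JIDENTITYENERGY}, and obtaining \eqref{E:AJIDENTITYENERGY} and \eqref{E:EXPANDAJMINUSID} as bookkeeping consequences.
```
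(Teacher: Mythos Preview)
Your proposal is correct and matches the paper's own proof essentially line for line: both treat \eqref{E:AJIDENTITYENERGY} as a trivial algebraic rearrangement, derive \eqref{E:AIDENTITYENERGY} from $\mathscr{A}[D\eta]=\mathbf{Id}$ with $D\eta=\mathbf{Id}+D\uptheta$, obtain \eqref{E:LAMBDAIDENTITYENERGY} by inserting $\delta^j_p=\mathscr{A}^j_\ell\eta^\ell,_p$ into $\Lambda_{ij}=\Lambda_{ip}\delta^j_p$, expand $\mathscr{J}=\det(\mathbf{Id}+D\uptheta)$ for \eqref{E:JIDENTITYENERGY}, and combine for \eqref{E:EXPANDAJMINUSID}. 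Your added remark that the a priori bound \eqref{E:APRIORI} controls the $O(|D\uptheta|^2)$ remainder is a helpful clarification the paper leaves implicit.
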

\begin{proof}
First (\ref{E:AJIDENTITYENERGY}) is straightforward to verify by expanding the right-hand side. For (\ref{E:AIDENTITYENERGY}), first note $\mathscr{A}=[D \eta]^{-1}$ and $\eta=y+\uptheta$. We then have
\begin{equation}
\mathscr{A}^k_j-\delta^k_j = \mathscr{A}^k_l  \delta^l_j  -\mathscr{A}^k_l [D\eta]^l_j  = \mathscr{A}^k_l (\delta^l_j - [D  y]^l_j - [D\uptheta]^l_j) = -\mathscr{A}^k_l  [D\uptheta]^l_j.
\end{equation}
Next, (\ref{E:LAMBDAIDENTITYENERGY}) is proven in the following calculation where we recall $A=[D \eta]^{-1}$ and use $\eta=y+\uptheta$,
\begin{equation}  
\Lambda_{ij}=\Lambda_{ip}\delta^j_p = \Lambda_{ip} \mathscr{A}^j_l\eta^l,_p=\Lambda_{ip}(\mathscr{A}^j_p +  \mathscr{A}^j_l\uptheta^l,_p).
\end{equation}
Now (\ref{E:JIDENTITYENERGY}) follows from the calculation
\begin{equation}
\mathscr{J}=\det [D \eta ] = \det [\textbf{Id} + D\uptheta ] = 1 + \text{Tr}[D \uptheta ] + O(|D\uptheta|^2).
\end{equation}
Finally (\ref{E:EXPANDAJMINUSID}) follows from (\ref{E:AJIDENTITYENERGY})-(\ref{E:AIDENTITYENERGY}) and (\ref{E:JIDENTITYENERGY}).
\end{proof}

\section{Time Based Inequalities}
We have the following useful $\tau$ based inequalities, summarized by the Lemma below.
\begin{lemma}\label{L:USEFULTAULEMMAGAMMALEQ5OVER3}
Fix an affine motion $A(t)$ from the set $\mathscr{S}$ under consideration, namely require
\begin{equation}
\det A(t) \sim 1 + t^3, \quad t \geq 0.
\end{equation}  
Let
\begin{equation}
\mu_1:=\lim_{\tau \rightarrow \infty} \frac{\mu_\tau(\tau)}{\mu(\tau)}, \quad \mu_0:=\frac{\sigma}{2}\mu_1.
\end{equation}          
Then we have the following properties
\begin{align}
0<\mu_0&=\mu_0(\gamma)\le\mu_1, \label{E:MU0MU1INEQGAMMALEQ5OVER3}\\
e^{\mu_1\tau} \lesssim & \ \mu(\tau)  \lesssim  e^{\mu_1\tau}, \ \ \tau\ge0, \label{E:EXPMU1MUINEQGAMMALEQ5OVER3}\\
\sum_{a+|\beta|\le N}\|X_r^a \slashed\partial^\beta{\bf V}\|_{1+\alpha+a,\psi e^{\bar{S}}} &+ \sum_{|\nu|\leq N} \|\partial^\nu {\bf V}\|_{1+\alpha,(1-\psi)e^{\bar{S}}} \lesssim e^{-\mu_0\tau}\mathcal{S}^N(\tau)^{\frac12} \label{E:EXPSQRTSNBOUNDGAMMALEQ5OVER3}\\
\| \Lambda_\tau \| \lesssim e^{-\mu_1 \tau}, & \quad  \| \Lambda \| + \| \Lambda^{-1} \| \le C,  \label{E:LAMBDABOUNDSGAMMALEQ5OVER3}\\
\sum_{i=1}^3\left(d_i+\frac{1}{d_i}\right) &\le C \label{E:EIGENVALUESBOUNDGAMMALEQ5OVER3} \\
\sum_{i=1}^3|\partial_\tau d_i| + \|\partial_\tau P\| &\lesssim e^{-\mu_1\tau} \label{E:EIGENVALUEPLUSPBOUNDGAMMALEQ5OVER3}\\
|{\bf w}|^2 \lesssim & \ \langle \Lambda^{-1}{\bf w}, {\bf w}\rangle  \lesssim |{\bf w}|^2 , \ {\bf w}\in \mathbb R^3, \label{E:LAMBDAINVERSEIPINEQGAMMALEQ5OVER3}
\end{align}
for $C > 0$.       
\end{lemma}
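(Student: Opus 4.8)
The plan is to establish Lemma~\ref{L:USEFULTAULEMMAGAMMALEQ5OVER3} by first pinning down the precise asymptotics of $\mu(\tau)$ and $\frac{\mu_\tau}{\mu}$, and then deriving every other bound from those asymptotics together with the structural decomposition $A=\mu O$ with $O\in\mathrm{SL}(3)$. The starting point is Lemma~\ref{L:AASYMPTOTICS}: since $\det A(t)\sim 1+t^3$, we have $\mu(t)=(\det A(t))^{1/3}\sim 1+t$ in the original time variable, and $A(t)=A_0+tA_1+M(t)$ with $\|\partial_t M(t)\|\lesssim(1+t)^{-3/\alpha}$. First I would translate this to the $\tau$ variable using $\frac{d\tau}{dt}=\frac1\mu$; because $\mu\sim 1+t$ this gives $t\sim e^{c\tau}$ for an appropriate constant, and hence $\mu(\tau)\sim e^{\mu_1\tau}$ where $\mu_1:=\lim_{\tau\to\infty}\frac{\mu_\tau}{\mu}$ is finite and positive. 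Existence and positivity of this limit is exactly the content of the exponential-convergence statement one extracts from the polynomial-in-$t$ asymptotics; this yields \eqref{E:MU0MU1INEQGAMMALEQ5OVER3} (with $\mu_0=\frac\sigma2\mu_1\le\mu_1$ since $\sigma<2$) and \eqref{E:EXPMU1MUINEQGAMMALEQ5OVER3}. I expect the verification that $\frac{\mu_\tau(\tau)}{\mu(\tau)}$ actually converges, rather than merely staying bounded, to be the main obstacle, and it is handled by differentiating $\mu^3=\det A$, using the ODE~\eqref{E:AODE} for $A''$, and observing that the resulting expression for $\frac{d}{dt}\log\mu$ decays at a rate that makes the improper time-change integral convergent.

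Next I would handle the matrix bounds \eqref{E:LAMBDABOUNDSGAMMALEQ5OVER3}, \eqref{E:EIGENVALUESBOUNDGAMMALEQ5OVER3}, \eqref{E:EIGENVALUEPLUSPBOUNDGAMMALEQ5OVER3} and \eqref{E:LAMBDAINVERSEIPINEQGAMMALEQ5OVER3}. Writing $\Lambda=(\det A)^{2/3}A^{-1}A^{-\top}=O^{-1}O^{-\top}=(O^\top O)^{-1}$ (using $A=\mu O$ and $\det O=1$), we see $\Lambda\in\mathrm{SL}(3)$ is symmetric positive-definite, so its eigenvalues $d_i>0$ satisfy $d_1d_2d_3=1$. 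Boundedness of $O$ and $O^{-1}$ — equivalently of $\Lambda$ and $\Lambda^{-1}$, which immediately gives \eqref{E:LAMBDABOUNDSGAMMALEQ5OVER3} and \eqref{E:LAMBDAINVERSEIPINEQGAMMALEQ5OVER3} — follows from \eqref{E:AASYMP}: $A(\tau)/\mu(\tau)\to$ a fixed invertible matrix as $\tau\to\infty$ (the normalized linear-growth term), and $O$ is continuous and nondegenerate on the compact initial interval, so $\|O\|+\|O^{-1}\|\le C$ uniformly. Then \eqref{E:EIGENVALUESBOUNDGAMMALEQ5OVER3} is the statement that the eigenvalues and their reciprocals are bounded, which is just $\|\Lambda\|+\|\Lambda^{-1}\|\le C$ spelled out in the spectral basis. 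For the decay statements, I would compute $\Lambda_\tau=\partial_\tau(O^\top O)^{-1}=-(O^\top O)^{-1}(O_\tau^\top O+O^\top O_\tau)(O^\top O)^{-1}$, so it suffices to bound $\|O_\tau\|=\|\partial_\tau(\mu^{-1}A)\|\lesssim e^{-\mu_1\tau}$; this comes from $\partial_\tau=\mu\,\partial_t$ applied to $\mu^{-1}A=\mu^{-1}(A_0+tA_1+M)$, where the $A_1$ term's $t$-derivative cancels against a $\mu^{-1}\mu_t A_1$ contribution up to lower order, and the $M$ and $A_0$ contributions decay because $\mu^{-1}\sim e^{-\mu_1\tau}$ and $\|\partial_t M\|$ is bounded. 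Standard perturbation theory for simple eigenvalues and the orthogonal eigenprojection then upgrades $\|\Lambda_\tau\|\lesssim e^{-\mu_1\tau}$ to \eqref{E:EIGENVALUEPLUSPBOUNDGAMMALEQ5OVER3}; one mild technical point is that eigenvalues could collide, in which case one works with spectral projectors onto eigenvalue clusters rather than individual eigenvectors, which is enough for all later uses.

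Finally, \eqref{E:EXPSQRTSNBOUNDGAMMALEQ5OVER3} is a weighted-Sobolev embedding statement relating the $X_r^a\slashed\partial^\beta$ and $\partial^\nu$ derivative norms of $\mathbf V$ to $e^{-\mu_0\tau}\mathcal S^N(\tau)^{1/2}$; I note this is essentially notation imported from the companion free-boundary works and I would prove it by combining the definition of $\mathcal S^N$ in \eqref{E:SNNORMGAMMALEQ5OVER3}, which controls $\mu^\sigma\|\partial^\nu\mathbf V\|^2$, with the lower bound $\mu^\sigma\gtrsim e^{\sigma\mu_1\tau}=e^{2\mu_0\tau}$ from \eqref{E:EXPMU1MUINEQGAMMALEQ5OVER3}, so that $\|\partial^\nu\mathbf V\|\lesssim e^{-\mu_0\tau}\mathcal S^N(\tau)^{1/2}$ directly, and then passing between the two families of derivative operators by a standard commutator/finite-propagation argument on the relevant weight. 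In the whole-space setting here the weighted norms with $\psi,\bar S$ collapse essentially to ordinary $L^2$ norms against $w=e^{-|y|^2/2}$, so the estimate reduces to the elementary inequality just described; this step is routine given the earlier lemmas and the definition of $\mathcal S^N$. Assembling these pieces in the order above — asymptotics of $\mu$, then the algebra of $\Lambda$ and its spectral data, then the embedding — completes the proof.
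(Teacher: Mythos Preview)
Your proposal is correct and follows essentially the same route as the paper: both start from the asymptotic decomposition of $A(t)$ given by Lemma~\ref{L:AASYMPTOTICS}, convert to the $\tau$ variable to obtain the exponential behavior of $\mu$, and then read off the $\Lambda$, eigenvalue, and $P$ bounds from $\Lambda=(O^\top O)^{-1}$ together with the decay of $O_\tau$. The only difference is one of packaging: the paper delegates the detailed verification of \eqref{E:EXPMU1MUINEQGAMMALEQ5OVER3} and \eqref{E:LAMBDABOUNDSGAMMALEQ5OVER3}--\eqref{E:LAMBDAINVERSEIPINEQGAMMALEQ5OVER3} to Lemma~A.1 of \cite{1610.01666}, whereas you spell the argument out directly; your observation that \eqref{E:EXPSQRTSNBOUNDGAMMALEQ5OVER3} reduces, via $\mu^\sigma\gtrsim e^{2\mu_0\tau}$ and the definition of $\mathcal S^N$, to the elementary bound $\|\partial^\nu\mathbf V\|\lesssim e^{-\mu_0\tau}\mathcal S^N(\tau)^{1/2}$ is exactly what the paper does as well.
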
 
\begin{proof}
The result (\ref{E:MU0MU1INEQGAMMALEQ5OVER3}) is clear from the definition of $\mu_0$. For inequalities (\ref{E:EXPMU1MUINEQGAMMALEQ5OVER3}) and (\ref{E:LAMBDABOUNDSGAMMALEQ5OVER3}) through (\ref{E:LAMBDAINVERSEIPINEQGAMMALEQ5OVER3}) we first note that by Lemma \ref{L:AASYMPTOTICS}, there exist matrices $A_0,A_1,M(t)$ such that          
\begin{align}  
A(t) = A_0 + t A_1 + M(t), \quad t \geq 0.
\end{align} 
where $A_0,A_1$ are time-independent and $M(t)$ satisfies the bounds
\begin{align} 
\|M(t)\| = o_{t \rightarrow \infty}(1+t), \ \ \|\partial_t M(t)\| \lesssim (1+t)^{3-3\gamma}.
\end{align}          
We also note $\det A(t) \sim 1 + t^3$. Then inequalities (\ref{E:EXPMU1MUINEQGAMMALEQ5OVER3}) and (\ref{E:LAMBDABOUNDSGAMMALEQ5OVER3}) through (\ref{E:LAMBDAINVERSEIPINEQGAMMALEQ5OVER3}) follow from Lemma A.1 \cite{1610.01666}. Finally, (\ref{E:EXPSQRTSNBOUNDGAMMALEQ5OVER3}) follows from the definition of $\mathcal{S}^N$ (\ref{E:SNNORMGAMMALEQ5OVER3}) and properties (\ref{E:MU0MU1INEQGAMMALEQ5OVER3})-(\ref{E:EXPMU1MUINEQGAMMALEQ5OVER3}) above.
\end{proof}

\section{Local Well Posedness}\label{A:LWP}
We construct a local solution for our original Euler system (\ref{E:CONTINUITY})-(\ref{E:EEOS}) from generic initial data. First write the equations (\ref{E:CONTINUITY})-(\ref{E:EE}) in terms of $(\rho,\mathbf{u},T)$ as follows where we use the material derivative $\frac{D}{Dt}=\partial_t + \mathbf{u} \cdot \nabla$ and we have multiplied by $\text{diag}(T^2, \rho^2 T I_3,\alpha \rho^2)$ where $I_3$ is the $3 \times 3$ identity matrix,
\begin{alignat}{2}
T^2 \frac{D \rho}{D t} + \rho T^2 \,\text{div}(\mathbf{u}) & = 0 \label{E:RHOSYMMETRIC1} \\
\rho^2 T \frac{D \mathbf{u}}{D t} + \rho T^2 \, \nabla \rho +  \rho^2 T  \, \nabla T & = 0 \label{E:USYMMETRIC1} \\
\alpha \rho^2 \frac{D T}{D t} + \rho^2 T \, \text{div}(\mathbf{u}) & = 0 \label{E:TSYMMETRIC1}
\end{alignat}
Let $(*)$ denote the symmetric $5 \times 5$ system of equations (\ref{E:RHOSYMMETRIC1})-(\ref{E:TSYMMETRIC1}).

We first note a fixed affine solution $(\rho_A,\mathbf{u}_A,T_A)$ solves $(*)$ with initial data 
\begin{equation}
((\rho_A)_0,(\mathbf{u}_A)_0,(T_A)_0)=(\rho_A(0,\cdot),\mathbf{u}_A(0,\cdot),T_A(0,\cdot)).
\end{equation}

Next we construct initial data, using generic initial data $(\rho_0,\mathbf{u}_0,T_0)$ and our affine initial data $((\rho_A)_0,(\mathbf{u}_A)_0,(T_A)_0)$, for a modified system which allows to us to avoid the fact that we want to have $\rho_0 \rightarrow 0$ for large $x$. To this end choose $(\sigma_0^{\rho},\sigma_0^{\mathbf{u}},\sigma_0^{T})$ as follows
\begin{align}
\sigma_0^{\rho}&=\varphi (\rho_0-C_1) + (1-\varphi)\psi ((\rho_A)_0-C_1)+(1-\psi) \rho_0,  \\
\sigma_0^{\mathbf{u}}&=\varphi \mathbf{u}_0 + (1-\varphi) (\mathbf{u}_A)_0,  \\
\sigma_0^{T}&=\varphi T_0 + (1-\varphi) (T_A)_0,
\end{align}
where $\varphi, \psi  \in C_c^\infty (\mathbf{R}^3)$ are such that, for fixed $R>0$ and $\eta >0$,  $\varphi=1 \text{ on } B(0,\frac{R}{2})$, $\varphi = 0 \text{ on } \mathbb{R}^3 \setminus B(0,R)$, $\psi=1 \text{ on } B(0,R+2\eta)$ and $\psi = 0 \text{ on } \mathbb{R}^3 \setminus B(0,R+3\eta)$, and $C_1 >0$ is such that $\| \rho_0 \|_{L^\infty(\mathbb{R}^3)} \leq \frac{C_1}{2}$. Note the existence of $C_1>0$ will be guaranteed through the regularity of $\rho_0$. Then by construction we will have $\inf_{\mathbb{R}^3} \{ \sigma_0^{\rho} + C_1 \} > 0$. Now we consider $(\sigma_0^{\rho},\sigma_0^{\mathbf{u}},\sigma_0^{T})$ as initial data for the modified system
\begin{alignat}{2}
T^2 \frac{D \rho}{D t} + (\rho+C_1) T^2 \,\text{div}(\mathbf{u}) & = 0 \label{E:RHOSYMMETRIC1MOD} \\
(\rho+C_1)^2 T \frac{D \mathbf{u}}{D t} + (\rho+C_1) T^2 \, \nabla \rho +  (\rho+C_1)^2 T  \, \nabla T & = 0 \label{E:USYMMETRIC1MOD} \\
\alpha (\rho+C_1)^2 \frac{D T}{D t} + (\rho+C_1)^2 T \, \text{div}(\mathbf{u}) & = 0 \label{E:TSYMMETRIC1MOD}
\end{alignat}
Let $(\sigma \text{-} *)$ denote the symmetric $5 \times 5$ system of equations (\ref{E:RHOSYMMETRIC1MOD})-(\ref{E:TSYMMETRIC1MOD}). Then, with sufficiently regular $(\rho_0,\mathbf{u}_0,T_0)$ which will be specified by the Lagrangian formulation, by Theorem II \cite{kato1975cauchy} there exists $\hat{T} > 0$ such that $(\sigma^{\rho},\sigma^{\mathbf{u}},\sigma^{T})$ is a solution to $(\sigma\text{-}*)$ with initial data $(\sigma_0^{\rho},\sigma_0^{\mathbf{u}},\sigma_0^{T})$. Now let
\begin{equation}
(\rho_B,\mathbf{u}_B,T_B)=(\sigma^{\rho}+C_1,\sigma^{\mathbf{u}},\sigma^{T}).
\end{equation}
Since $(\sigma^{\rho},\sigma^{\mathbf{u}},\sigma^{T})$ solve $(\sigma \text{-} *)$ we have that $(\rho_B,\mathbf{u}_B,T_B)$ solve $(*)$ with initial data $((\rho_B)_0,(\mathbf{u}_B)_0,(T_B)_0)=(\sigma_0^\rho+C_1,\sigma_0^{\mathbf{u}},\sigma_0^T)$.

Next let
\begin{align}
K&=\{(x,t) \, | \, 0 \leq t \leq T^1, x \in B(0,R+\eta+Mt) \} \notag \\
M&=\tfrac{3}{c} (\sup_{0 \leq t \leq T^1-\kappa} \{ \| \rho_B (T_B)^2  \|_{L^\infty(B(0,R+2\eta))} + \| (\rho_B)^2 T_B \mathbf{u}_B \|_{L^\infty(B(0,R+2\eta))} \notag \\
&\qquad \qquad \qquad + \| (\rho_{B})^2 T_{B} \|_{L^\infty(B(0,R+2\eta))}  + \| \rho_A (T_A)^2  \|_{L^\infty(B(0,R+2\eta))} \notag \\
&\qquad \qquad \qquad + \| (\rho_A)^2 T_A \mathbf{u}_A \|_{L^\infty(B(0,R+2\eta))} + \| (\rho_A)^2 T_A \|_{L^\infty(B(0,R+2\eta))} \}) \notag \\
c&>0 \text{ is such that } c \leq \sup_{0 \leq t \leq T^*-\kappa} \{ \|(T_{B})^2  \|_{L^\infty(\mathbb{R}^3)} + \| (\rho_{B})^2 T_{B} \|_{L^\infty(\mathbb{R}^3)} + \| \alpha (\rho_{B})^2  \|_{L^\infty(\mathbb{R}^3)} \}, \notag \\
T^1&=\min (\hat{T}-\kappa, \frac{\eta}{2 M} - \kappa), \notag \\
\kappa &> 0 \text{ is sufficiently small.}
\end{align}
Now we take
\begin{equation}
(\rho,\mathbf{u},T)=\begin{cases} (\rho_{B},\mathbf{u}_{B},T_{B}) & \text{in } K, \\
(\rho_A,\mathbf{u}_A,T_A) & \text{outside } K.
\end{cases}
\end{equation}
Then $(\rho,\mathbf{u},T)$ is a solution of $(*)$ on $\mathbb{R}^3 \times [0,T^1]$ with the following initial data
\begin{align}
\rho_0 &=\varphi (\rho_B)_0+ (1-\varphi) (\rho_A)_0 ,   \notag \\
\mathbf{u}_0 &=\varphi (\mathbf{u}_B)_0 + (1-\varphi) (\mathbf{u}_A)_0, \notag \\
T_0 &=\varphi (T_B)_0 + (1-\varphi)(T_A)_0,
\end{align}
since $(\rho,\mathbf{u},T)$ is a solution in $K$ and outside $K$, and applying a classical property of local uniqueness of solutions to $(*)$ to get that $(\rho,\mathbf{u},T)$ is continuous across $\partial K$.

\section{Curl Equations Derivation}\label{A:CURLEQS}
Here we give the derivations of the equations satisfied by the modified curl of our velocity and perturbation which will be used for the purpose of our estimates. 
\begin{lemma}\label{L:CURLEQUATIONDERIVATION}
Let $(\uptheta, {\bf V}):\Omega \rightarrow \mathbb R^3\times \mathbb R^3$ be a unique local solution to (\ref{E:THETAEQNLINEARENERGYFUNCTION})-(\ref{E:THETAICGAMMALEQ5OVER3}) on $[0,T^*]$ for $T^*>0$ fixed. Then for all $\tau \in [0,T]$ the curl matrices $\text{\em Curl}_{\Lambda\mathscr{A}}{\bf V}$ and $\text{\em Curl}_{\Lambda\mathscr{A}}{\bf \uptheta}$ satisfy the equations
\begin{align}\label{E:CURLVFINAL}
&\text{\em Curl}_{\Lambda\mathscr{A}}{\bf V}  = \frac{1}{1+\alpha}\Lambda\mathscr{A} y \times \mathbf{V} + \frac{\alpha}{(1+\alpha)(1+\beta)} \Lambda\mathscr{A} \nabla \beta \times \mathbf{V} \notag \\
&+  \frac{\mu(0) \text{\em Curl}_{\Lambda \mathscr{A}} ({\bf V}(0))}{\mu} - \frac{\mu(0) \Lambda\mathscr{A} y \times \mathbf{V}(0)}{(1+\alpha) \mu} - \frac{\alpha \mu(0) \Lambda\mathscr{A} \nabla \beta \times \mathbf{V}(0)}{(1+\alpha)(1+\beta) \mu} \notag \\
& + \frac{1}{\mu}\int_0^{\tau} \mu [\partial_{\tau}, \text{\em Curl}_{\Lambda\mathscr{A}}] {\bf V} d\tau' - \frac{1}{(1+\alpha) \mu} \int_0^{\tau} \mu [\partial_{\tau},\Lambda \mathscr{A} y \times ] \mathbf{V} d \tau' \notag \\
&- \frac{\alpha}{(1+\alpha)(1+\beta) \mu} \int_0^{\tau} \mu [\partial_{\tau},\Lambda \mathscr{A} \nabla \beta \times ] \mathbf{V} d \tau' \notag \\
& - \frac{2}{\mu} \int_0^{\tau} \mu \, \text{\em Curl}_{\Lambda\mathscr{A}}(\Gamma^\ast{\bf V}) d\tau' + \frac{2}{(1+\alpha) \mu} \int_0^{\tau} \mu \, \Lambda\mathscr{A} y \times (\Gamma^\ast \mathbf{V}) d \tau'  \notag \\
&+ \frac{2 \alpha}{(1+\alpha)(1+\beta) \mu} \int_0^{\tau} \mu \, \Lambda\mathscr{A} \nabla \beta \times (\Gamma^\ast \mathbf{V}) d \tau' \notag \\
&+\frac{\overline{C}}{(1+\alpha) \mu}\int_0^{\tau} \mu^{1-\delta-\sigma} \Lambda y \times  \Lambda \uptheta d \tau' -\frac{\overline{C}}{(1+\alpha) \mu} \int_0^{\tau} \mu^{1-\delta-\sigma} \Lambda \mathscr{A}[D \uptheta]y \times \Lambda \eta d \tau' \notag \\
&+\frac{\alpha \overline{C}}{(1+\alpha)(1+\beta) \mu}\int_0^{\tau} \mu^{1-\delta-\sigma} \Lambda \mathscr{A} \nabla \beta \times \Lambda \uptheta d \tau' -\frac{\alpha \overline{C}}{(1+\alpha)(1+\beta) \mu}\int_0^{\tau} \mu^{1-\delta-\sigma} \Lambda \mathscr{A} \nabla \beta \times \Lambda y d \tau'.
\end{align}
and          
\begin{align}\label{E:CURLTHETAFINAL}
&\text{\em Curl}_{\Lambda\mathscr{A}}{\bf \uptheta}  = \text{\em Curl}_{\Lambda \mathscr{A}}([\uptheta(0)]) \notag \\
&+\mu(0) \text{\em Curl}_{\Lambda \mathscr{A}} ({\bf V}(0)) \int_0^\tau \frac{1}{\mu(\tau')} d \tau' - \frac{\mu(0) \Lambda\mathscr{A} y \times \mathbf{V}(0)}{1+\alpha} \int_0^\tau \frac{1}{\mu(\tau')} d \tau' \notag \\
& - \frac{\alpha \mu(0) \Lambda\mathscr{A} \nabla \beta \times \mathbf{V}(0)}{(1+\alpha)(1+\beta)}\int_0^{\tau} \frac{1}{\mu(\tau')} d \tau' + \int_0^\tau  [\partial_{\tau}, \text{\em Curl}_{\Lambda\mathscr{A}}] {\uptheta} d\tau' \notag \\
& + \frac{1}{1+\alpha} \int_0^\tau \Lambda\mathscr{A} y \times \mathbf{V} d\tau' + \frac{\alpha}{(1+\alpha)(1+\beta)} \int_0^\tau \Lambda\mathscr{A} \nabla \beta \times \mathbf{V} d\tau' \notag \\
&+\int_0^\tau \frac{1}{\mu(\tau')} \int_0^{\tau'} \mu(\tau'') [\partial_{\tau}, \text{\em Curl}_{\Lambda\mathscr{A}}] {\bf V} d\tau'' d \tau' -\frac{1}{1+\alpha} \int_0^{\tau} \frac{1}{\mu(\tau')} \int_0^{\tau'} \mu(\tau '') [\partial_{\tau},\Lambda \mathscr{A} y \times ] \mathbf{V} d\tau '' d \tau ' \notag \\
&- \frac{\alpha}{(1+\alpha)(1+\beta)} \int_0^{\tau} \frac{1}{\mu(\tau')} \int_0^{\tau'} \mu(\tau '') [\partial_{\tau},\Lambda \mathscr{A} \nabla \beta \times ] \mathbf{V} d \tau'' \notag \\
& - \int_0^\tau \frac{2}{\mu(\tau')} \int_0^{\tau '} \mu(\tau'') \, \text{\em Curl}_{\Lambda\mathscr{A}}(\Gamma^\ast{\bf V}) d\tau'' d \tau' + \frac{2}{1+\alpha} \int_0^{\tau} \frac{1}{\mu(\tau')} \int_0^{\tau '} \mu(\tau'') \, \Lambda\mathscr{A} x \times (\Gamma^\ast \mathbf{V}) d\tau '' d \tau ' \notag \\
&+ \frac{2 \alpha}{(1+\alpha)(1+\beta)} \int_0^{\tau} \frac{1}{\mu(\tau')} \int_0^{\tau '} \mu(\tau'') \, \Lambda\mathscr{A} \nabla \beta \times (\Gamma^\ast \mathbf{V}) d\tau '' d \tau ' \notag \\
& +\frac{\overline{C}}{1+\alpha} \int_0^\tau \frac{1}{\mu(\tau')} \int_0^{\tau'} \mu(\tau'')^{1-\delta-\sigma}  \Lambda x \times  \Lambda \uptheta d \tau '' d \tau ' \notag \\
& -\frac{\overline{C}}{1+\alpha} \int_0^\tau \frac{1}{\mu(\tau')} \int_0^{\tau'} \mu(\tau '')^{1-\delta-\sigma} \Lambda \mathscr{A}[D \uptheta]y \times \Lambda \eta d \tau '' d \tau ' \notag \\
&+\frac{\alpha \overline{C}}{(1+\alpha)(1+\beta)} \int_0^\tau \frac{1}{\mu} \int_0^{\tau'} \mu^{1-\delta-\sigma} \Lambda \mathscr{A} \nabla \beta \times \Lambda \uptheta d \tau'' d\tau' \notag \\
& -\frac{\alpha \overline{C}}{(1+\alpha)(1+\beta)} \int_0^{\tau} \frac{1}{\mu} \int_0^{\tau'} \mu^{1-\delta-\sigma} \Lambda \mathscr{A} \nabla \beta \times \Lambda y  d \tau'' d\tau'.
\end{align}
\end{lemma}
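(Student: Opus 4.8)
The plan is to apply the antisymmetric operator $\text{Curl}_{\Lambda\mathscr{A}}$ of (\ref{E:CURLCROSSPRODMATRICES}) directly to the evolution equation (\ref{E:THETAEQNLINEARENERGYFUNCTION}), use that the pressure contribution is --- up to the Gaussian weight and the factor $1+\beta$ --- a $\Lambda$-weighted gradient along the flow map, so that its modified curl vanishes modulo commutators, and then solve the resulting damped transport ODE in $\tau$ by an integrating factor. Concretely, I would first divide (\ref{E:THETAEQNLINEARENERGYFUNCTION}) by $\mu^\sigma$ and, using $\mathbf{V}=\partial_\tau\uptheta$, cast it as a first-order equation for $\mathbf{V}$, namely $\partial_\tau\mathbf{V}_i+\tfrac{\mu_\tau}{\mu}\mathbf{V}_i+2\Gamma^\ast_{ij}\mathbf{V}_j+\overline{C}\mu^{-\delta-\sigma}\Lambda_{i\ell}\uptheta_\ell+\tfrac{\overline{C}\mu^{-\delta-\sigma}}{w}\big(w\Lambda_{ij}(1+\beta)(\mathscr{A}^k_j\mathscr{J}^{-\frac1\alpha}-\delta^k_j)\big)_{,k}+\tfrac{\overline{C}\mu^{-\delta-\sigma}}{w}\Lambda_{ik}(w\beta)_{,k}=0$. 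Using the Gaussian relation $w_{,k}=-y_kw$ coming from (\ref{E:WDEMAND})--(\ref{E:AFFREQ2}), the Piola identity $(\mathscr{J}\mathscr{A}^k_i)_{,k}=0$, and the identities (\ref{E:AJIDENTITYENERGY})--(\ref{E:LAMBDAIDENTITYENERGY}) of Lemma \ref{L:USEFULIDENTITIESPREENERGYINEQUALITY} --- in particular $\mathscr{A}^s_m\uptheta^\ell_{,s}=\delta^\ell_m-\mathscr{A}^\ell_m$ and $\mathscr{A}^k_j-\delta^k_j=-\mathscr{A}^k_\ell[D\uptheta]^\ell_j$ --- I would rewrite the pressure and source contributions together with the linear term $\overline{C}\mu^{-\delta-\sigma}\Lambda_{i\ell}\uptheta_\ell$ as $\overline{C}\mu^{-\delta-\sigma}$ times the sum of (a) a genuine $\Lambda$-weighted $\eta$-gradient $\Lambda_{ij}[\nabla_\eta\mathfrak{q}]_j$ of a scalar $\mathfrak{q}$ built from $\mathscr{J}^{-1/\alpha}$ and $1+\beta$, and (b) lower-order/algebraic cross-product terms, namely $\Lambda y\times\Lambda\uptheta$, $\Lambda\mathscr{A}[D\uptheta]y\times\Lambda\eta$, $(1+\beta)^{-1}\Lambda\mathscr{A}\nabla\beta\times\Lambda\uptheta$ and $(1+\beta)^{-1}\Lambda\mathscr{A}\nabla\beta\times\Lambda y$, with the numerical coefficients $\tfrac{1}{1+\alpha}$, $\tfrac{\alpha}{1+\alpha}$ forced by the exponent $\tfrac1\alpha$ in $\mathscr{J}^{-1/\alpha}$.

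Next I would apply $\text{Curl}_{\Lambda\mathscr{A}}$ to this equation. On the time-derivative term, $\text{Curl}_{\Lambda\mathscr{A}}(\partial_\tau\mathbf{V})=\partial_\tau\text{Curl}_{\Lambda\mathscr{A}}\mathbf{V}-[\partial_\tau,\text{Curl}_{\Lambda\mathscr{A}}]\mathbf{V}$; on the scalar-gradient part, $\text{Curl}_{\Lambda\mathscr{A}}(\Lambda[\nabla_\eta\mathfrak{q}])$ vanishes up to a commutator of $[\,\partial_{y},\text{Curl}_{\Lambda\mathscr{A}}]$-type, which follows from symmetry of mixed second spatial derivatives together with the differentiation formulae (\ref{E:AJDIFFERENTIATIONFORMULAE}); the antisymmetrization turns the algebraic terms of (b) into the cross-product matrices of (\ref{E:CURLCROSSPRODMATRICES}), and the rotation term $2\Gamma^\ast\mathbf{V}$ becomes $2\,\text{Curl}_{\Lambda\mathscr{A}}(\Gamma^\ast\mathbf{V})$. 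Collecting, and then inside $\partial_\tau(\Lambda\mathscr{A}y\times\mathbf{V})$ and $\partial_\tau((1+\beta)^{-1}\Lambda\mathscr{A}\nabla\beta\times\mathbf{V})$ substituting $\partial_\tau\mathbf{V}$ back from the equation, I would obtain that the modified curl $\mathbf{W}:=\text{Curl}_{\Lambda\mathscr{A}}\mathbf{V}-\tfrac{1}{1+\alpha}\Lambda\mathscr{A}y\times\mathbf{V}-\tfrac{\alpha}{(1+\alpha)(1+\beta)}\Lambda\mathscr{A}\nabla\beta\times\mathbf{V}$ solves a damped transport equation $\partial_\tau\mathbf{W}+\tfrac{\mu_\tau}{\mu}\mathbf{W}=\mathcal{R}$, where $\mathcal{R}$ gathers $[\partial_\tau,\text{Curl}_{\Lambda\mathscr{A}}]\mathbf{V}$, the commutators of $\Lambda\mathscr{A}y\times$ and $\Lambda\mathscr{A}\nabla\beta\times$, the three terms $\text{Curl}_{\Lambda\mathscr{A}}(\Gamma^\ast\mathbf{V})$, $\Lambda\mathscr{A}y\times(\Gamma^\ast\mathbf{V})$, $\Lambda\mathscr{A}\nabla\beta\times(\Gamma^\ast\mathbf{V})$, and the four $\overline{C}\mu^{-\delta-\sigma}$ algebraic terms listed above.

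Since $\tfrac{d}{d\tau}\ln\mu=\tfrac{\mu_\tau}{\mu}$, the integrating factor is $\mu$, hence $\mu(\tau)\mathbf{W}(\tau)=\mu(0)\mathbf{W}(0)+\int_0^\tau\mu\,\mathcal{R}\,d\tau'$, and every product $\mu\cdot\mu^{-\delta-\sigma}$ collapses to $\mu^{1-\delta-\sigma}$, producing exactly the integrands of (\ref{E:CURLVFINAL}); dividing by $\mu(\tau)$ and moving the $\Lambda\mathscr{A}y\times\mathbf{V}$ and $\Lambda\mathscr{A}\nabla\beta\times\mathbf{V}$ pieces, at times $\tau$ and $0$, to the right-hand side gives (\ref{E:CURLVFINAL}) verbatim. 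For (\ref{E:CURLTHETAFINAL}) I would use linearity of the antisymmetric operator together with $\partial_\tau(\text{Curl}_{\Lambda\mathscr{A}}\uptheta)=\text{Curl}_{\Lambda\mathscr{A}}\mathbf{V}+[\partial_\tau,\text{Curl}_{\Lambda\mathscr{A}}]\uptheta$ (valid since $\uptheta=\uptheta_0+\int_0^\tau\mathbf{V}\,d\tau'$), integrate from $0$ to $\tau$, and substitute the already-derived formula (\ref{E:CURLVFINAL}) for $\text{Curl}_{\Lambda\mathscr{A}}\mathbf{V}$ under the integral sign; the nested integrals $\int_0^\tau\tfrac{1}{\mu(\tau')}\int_0^{\tau'}\mu(\tau'')(\cdots)\,d\tau''\,d\tau'$ appearing in (\ref{E:CURLTHETAFINAL}) are precisely the $\tfrac{1}{\mu}\int_0^\tau\mu(\cdots)\,d\tau'$ terms of (\ref{E:CURLVFINAL}) once integrated.

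The main obstacle is entirely in the first two steps: verifying that after the antisymmetric $\Lambda\mathscr{A}$-curl the scalar gradient $\Lambda[\nabla_\eta\mathfrak{q}]$ cancels \emph{exactly}, and that the remainder reorganizes into the precise cross-product structures of (\ref{E:CURLVFINAL})--(\ref{E:CURLTHETAFINAL}) with the stated coefficients and signs. This is a controlled but long computation, resting on the symmetry of $\Lambda$ (so that several terms symmetric in the free indices $(i,j)$ drop under antisymmetrization), the two equivalent forms of $\mathscr{A}^k_j-\delta^k_j$, the relation $\mathscr{A}^s_m\uptheta^\ell_{,s}=\delta^\ell_m-\mathscr{A}^\ell_m$, the Piola identity, and the Gaussian identity $w_{,k}=-y_kw$; keeping careful track of which index contractions are genuinely antisymmetric rather than symmetric in $(i,j)$ is where an error would most readily slip in.
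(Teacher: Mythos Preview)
Your strategy is correct and matches the paper's: apply $\text{Curl}_{\Lambda\mathscr{A}}$ to the equation, exploit that the pressure contribution is a $\Lambda$-weighted $\eta$-gradient (so its curl vanishes exactly), collect commutators, and solve the resulting damped ODE for $\mathbf{W}:=\text{Curl}_{\Lambda\mathscr{A}}\mathbf{V}-\tfrac{1}{1+\alpha}\Lambda\mathscr{A}y\times\mathbf{V}-\tfrac{\alpha}{(1+\alpha)(1+\beta)}\Lambda\mathscr{A}\nabla\beta\times\mathbf{V}$ via the integrating factor $\mu$; then integrate once more for $\uptheta$. The derivation of (\ref{E:CURLTHETAFINAL}) from (\ref{E:CURLVFINAL}) is exactly as you say.

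The one execution point where the paper is cleaner than your outline: rather than decomposing the pressure term by hand into ``gradient plus cross products'' using (\ref{E:AJIDENTITYENERGY})--(\ref{E:LAMBDAIDENTITYENERGY}), the paper first passes back to $\eta$ and then \emph{multiplies the entire equation} by the specific scalar weight $w^{\frac{1}{1+\alpha}}(1+\beta)^{-\frac{\alpha}{1+\alpha}}$. Using the Piola identity this converts the full pressure term into $(1+\alpha)\,\overline{C}\mu^{-\delta}\,\Lambda_{ij}\mathscr{A}^k_j\big(w^{\frac{1}{1+\alpha}}(1+\beta)^{\frac{1}{1+\alpha}}\mathscr{J}^{-1/\alpha}\big)_{,k}$, i.e.\ an exact $\Lambda\nabla_\eta$ of the scalar $\mathfrak{q}=w^{\frac{1}{1+\alpha}}(1+\beta)^{\frac{1}{1+\alpha}}\mathscr{J}^{-1/\alpha}$, and your coefficients $\tfrac{1}{1+\alpha}$, $\tfrac{\alpha}{1+\alpha}$ then drop out automatically from the product rule $\text{Curl}_{\Lambda\mathscr{A}}(f\mathbf{F})=f\,\text{Curl}_{\Lambda\mathscr{A}}\mathbf{F}+\Lambda\mathscr{A}\nabla f\times\mathbf{F}$ applied to the \emph{remaining} (dynamical and linear $\Lambda\eta$) terms, together with $\nabla\big(w^{\frac{1}{1+\alpha}}(1+\beta)^{-\frac{\alpha}{1+\alpha}}\big)$. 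This is precisely the scalar $\mathfrak{q}$ you were seeking, and identifying it up front removes the guesswork you flag as ``the main obstacle.'' Also, a small correction: you do not need to ``substitute $\partial_\tau\mathbf{V}$ back from the equation'' inside the cross-product terms; the damping term $\tfrac{\mu_\tau}{\mu}$ is already present in the equation, so $\Lambda\mathscr{A}y\times(\partial_\tau\mathbf{V}+\tfrac{\mu_\tau}{\mu}\mathbf{V})=\tfrac{1}{\mu}\partial_\tau(\mu\,\Lambda\mathscr{A}y\times\mathbf{V})-[\partial_\tau,\Lambda\mathscr{A}y\times]\mathbf{V}$ directly, with no back-substitution required.
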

\begin{proof}
Writing (\ref{E:THETAEQNLINEARENERGYFUNCTION}) without the source term outside the nonlinearity, we have
\begin{equation}
\mu^{\sigma} \partial_{\tau \tau} \uptheta_i + \mu_{\tau} \mu^{-1+\sigma} \partial_\tau \uptheta_i + 2 \mu^{\sigma} \Gamma^*_{ij} \partial_{\tau} \uptheta_j + \overline{C} \mu^{-\delta} \Lambda_{i \ell} \uptheta_\ell + \frac{\overline{C} \mu^{-\delta}}{w} (w \Lambda_{ij} ( (1+\beta) \mathscr{A}_j^k \mathscr{J}^{-\frac{1}{\alpha}} - \delta_j^k))_{,k}=0.
\end{equation}
Return back to $\eta$ via $\eta=\uptheta+y$,
\begin{equation}
\mu^{\sigma} \partial_{\tau \tau} \eta_i + \mu_{\tau} \mu^{-1+\sigma} \partial_\tau \eta_i + 2 \mu^{\sigma} \Gamma^*_{ij} \partial_{\tau} \eta_j + \overline{C} \mu^{-\delta} \Lambda_{i \ell} \eta_\ell + \frac{\overline{C} \mu^{-\delta}}{w} (w \Lambda_{ij} (1+\beta) \mathscr{A}_j^k \mathscr{J}^{-\frac{1}{\alpha}})_{,k}=0.
\end{equation}
Multiply by $w^{\frac{1}{1+\alpha}}(1+\beta)^{-\frac{\alpha}{1+\alpha}}$
\begin{align}
&w^{\frac{1}{1+\alpha}} (1+\beta)^{-\frac{\alpha}{1+\alpha}} (\mu^{\sigma} \partial_{\tau \tau} \eta_i + \mu_{\tau} \mu^{-1+\sigma} \partial_\tau \eta_i + 2 \mu^{\sigma} \Gamma^*_{ij} \partial_{\tau} \eta_j + \overline{C} \mu^{-\delta} \Lambda_{i \ell} \eta_\ell) \notag \\
&+  w^{\frac{1}{1+\alpha}-1} (1+\beta)^{-\frac{\alpha}{1+\alpha}} \overline{C} \mu^{-\delta} (w \Lambda_{ij} \mathscr{A}^k_j \mathscr{J}^{-\frac{1}{\alpha}}),_k=0
\end{align}
Note
\begin{equation}
w^{\frac{1}{1+\alpha}-1} (1+\beta)^{-\frac{\alpha}{1+\alpha}} \overline{C} \mu^{-\delta} (w \Lambda_{ij} \mathscr{A}^k_j \mathscr{J}^{-\frac{1}{\alpha}}),_k = \overline{C} \mu^{-\delta} (1+\alpha) \Lambda_{i j} \mathscr{A}^k_j (w^{\frac{1}{1+\alpha}} (1+\beta)^{\frac{1}{1+\alpha}} \mathscr{J}^{-\frac{1}{\alpha}}),_k.
\end{equation}
Moving away from coordinates we then have
\begin{align}\label{E:AWAYFROMCOORDINATES}
&w^{\frac{1}{1+\alpha}} (1+\beta)^{-\frac{\alpha}{1+\alpha}}  (\partial_{\tau \tau} \uptheta + \mu_{\tau} \mu^{-1+\sigma} \uptheta + 2 \mu^{\sigma} \Gamma^* \partial_{\tau} \uptheta + \overline{C} \mu^{-\delta} \Lambda \eta) \notag \\
&+ \overline{C} \mu^{-\delta} (1+\alpha) \Lambda \mathscr{A}^T \nabla(w^{\frac{1}{1+\alpha}} (1+\beta)^{\frac{1}{1+\alpha}} \mathscr{J}^{-\frac{1}{\alpha}})=0.
\end{align}
Note
\begin{equation}
\overline{C} \mu^{-\delta} (1+\alpha) \Lambda \mathscr{A}^T \nabla(w^{\frac{1}{1+\alpha}} (1+\beta)^{\frac{1}{1+\alpha}} \mathscr{J}^{-\frac{1}{\alpha}})=\overline{C} \mu^{-\delta} (1+\alpha) \Lambda \nabla_{\eta} (w^{\frac{1}{1+\alpha}} (1+\beta)^{\frac{1}{1+\alpha}} \mathscr{J}^{-\frac{1}{\alpha}}).
\end{equation}
Since $\text{Curl}_{\Lambda \mathscr{A}} (\Lambda \nabla_\eta f) =0$, apply $\text{Curl}_{\Lambda \mathscr{A}}$ to (\ref{E:AWAYFROMCOORDINATES}),
\begin{equation}\label{E:PREDIVIDESQUAREROOTW}
\text{Curl}_{\Lambda \mathscr{A}} \left( w^{\frac{1}{1+\alpha}} (1+\beta)^{-\frac{\alpha}{1+\alpha}} (\mu^\sigma \partial_{\tau \tau} \uptheta + \mu^{-1+\sigma} \mu_{\tau} \partial_\tau \uptheta + 2 \mu^\sigma \Gamma^{*} \partial_\tau \uptheta + \overline{C} \mu^{-\delta} \Lambda \eta) \right) =0.
\end{equation}
Now note
\begin{equation}
\text{Curl}_{\Lambda \mathscr{A}}(w^{\frac{1}{1+\alpha}} (1+\beta)^{-\frac{\alpha}{1+\alpha}} \mathbf{F})=w^{\frac{1}{1+\alpha}} (1+\beta)^{-\frac{\alpha}{1+\alpha}} \, \text{Curl}_{\Lambda \mathscr{A}} \mathbf{F} + \Lambda \mathscr{A} \nabla(w^{\frac{1}{1+\alpha}} (1+\beta)^{-\frac{\alpha}{1+\alpha}}) \times \mathbf{F}.
\end{equation}
Also
\begin{align}
(w^{\frac{1}{1+\alpha}} (1+\beta)^{-\frac{\alpha}{1+\alpha}}),_s  &= \frac{1}{1+\alpha} w^{\frac{1}{1+\alpha}-1} w,_s (1+\beta)^{-\frac{\alpha}{1+\alpha}} -\frac{\alpha}{1+\alpha} (1+\beta)^{-\frac{\alpha}{1+\alpha}-1} \beta,_s w^{\frac{1}{1+\alpha}}  \notag \\
&= -\frac{1}{1+\alpha} w^{\frac{1}{1+\alpha}} (1+\beta)^{-\frac{\alpha}{1+\alpha}} y_s -\frac{\alpha}{1+\alpha} (1+\beta)^{-\frac{1+2\alpha}{1+\alpha}} \beta,_s w^{\frac{1}{1+\alpha}}
\end{align}
since $w,_s = - y_s w$. So
\begin{equation}
\Lambda \mathscr{A} \nabla(w^{\frac{1}{1+\alpha}} (1+\beta)^{-\frac{\alpha}{1+\alpha}}) \times \mathbf{F} = - \frac{w^{\frac{1}{1+\alpha}} (1+\beta)^{-\frac{\alpha}{1+\alpha}}}{1+\alpha} \Lambda \mathscr{A} y \times \mathbf{F} - \frac{\alpha(1+\beta)^{-\frac{1+2\alpha}{1+\alpha}} w^{\frac{1}{1+\alpha}}}{1+\alpha} \Lambda \mathscr{A} \nabla \beta \times \mathbf{F}
\end{equation}
where
\begin{equation}
[\Lambda \mathscr{A} y \times \mathbf{F}]^i_j := \Lambda_{jm} \mathscr{A}^s_m x_s \mathbf{F}^i - \Lambda_{im} \mathscr{A}^s_m y_s \mathbf{F}^j.
\end{equation}
So multiplying (\ref{E:PREDIVIDESQUAREROOTW}) by $w^{-\frac{1}{1+\alpha}}(1+\beta)^{\frac{\alpha}{1+\alpha}}$ we have and using $\text{Curl}_{\Lambda \mathscr{A}} (\Lambda \eta)=0$ we have
\begin{align}
&\mu^\sigma \text{Curl}_{\Lambda \mathscr{A}} (\partial_{\tau \tau} \uptheta) + \mu^{-1+\sigma} \mu_{\tau} \text{Curl}_{\Lambda \mathscr{A}}(\partial_\tau \uptheta) + 2 \mu^\sigma \text{Curl}_{\Lambda \mathscr{A}}(\Gamma^{*} \partial_\tau \uptheta) \notag \\
&-\frac{1}{1+\alpha} \Lambda \mathscr{A} x \times \left( \mu^\sigma \partial_{\tau \tau} \uptheta + \mu^{-1+\sigma} \mu_{\tau} \partial_\tau \uptheta + 2 \mu^\sigma \Gamma^{*} \partial_\tau \uptheta + \overline{C} \mu^{-\delta} \Lambda \eta\right) \notag \\
&-\frac{\alpha}{(1+\alpha)(1+\beta)} \Lambda \mathscr{A} \nabla \beta \times \left( \mu^\sigma \partial_{\tau \tau} \uptheta + \mu^{-1+\sigma} \mu_{\tau} \partial_\tau \uptheta + 2 \mu^\sigma \Gamma^{*} \partial_\tau \uptheta + \overline{C} \mu^{-\delta} \Lambda \eta\right)=0.
\end{align}
Divide by $\mu^{\sigma}$
\begin{align}
&\text{Curl}_{\Lambda \mathscr{A}} (\partial_{\tau \tau} \uptheta) + \mu^{-1} \mu_{\tau} \text{Curl}_{\Lambda \mathscr{A}}(\partial_\tau \uptheta) + 2 \text{Curl}_{\Lambda \mathscr{A}}(\Gamma^{*} \partial_\tau \uptheta) \notag \\
&-\frac{1}{1+\alpha} \Lambda \mathscr{A} x \times \left( \partial_{\tau \tau} \uptheta + \mu^{-1} \mu_{\tau} \partial_\tau \uptheta + 2  \Gamma^{*} \partial_\tau \uptheta + \overline{C} \mu^{-\delta-\sigma} \Lambda \eta\right) \notag \\
&-\frac{\alpha}{(1+\alpha)(1+\beta)} \Lambda \mathscr{A} \nabla \beta \times \left( \partial_{\tau \tau} \uptheta + \mu^{-1} \mu_{\tau} \partial_\tau \uptheta + 2  \Gamma^{*} \partial_\tau \uptheta + \overline{C} \mu^{-\delta-\sigma} \Lambda \eta\right)=0.
\end{align}
Note
\begin{equation}
\text{Curl}_{\Lambda \mathscr{A}} \left(\mathbf{V}_ \tau\right)=\partial_\tau \left( \, \text{Curl}_{\Lambda\mathscr{A}}\left({\bf V}\right)   \right)-[\partial_\tau, \text{Curl}_{\Lambda\mathscr{A}}] \left({\bf V}\right),
\end{equation}
where
\begin{equation}\label{E:CURLCOMMUTATOR}
[\partial_\tau, \text{Curl}_{\Lambda\mathscr{A}}] \mathbf{F}^i_j := \partial_\tau \left(\Lambda_{jm}\mathscr{A}^s_m\right) \mathbf{F},_s^i - \partial_\tau \left(\Lambda_{im}\mathscr{A}^s_m\right) \mathbf{F},_s^j. 
\end{equation}
Then
\begin{align}\label{E:CURLPREINTGAMMALEQ5OVER3}
&\partial_\tau \left(\mu\, \text{Curl}_{\Lambda\mathscr{A}}\left({\bf V}\right)   \right) = \mu [\partial_\tau, \text{Curl}_{\Lambda\mathscr{A}}] \left({\bf V}\right)  - 2 \mu \, \text{Curl}_{\Lambda\mathscr{A}}\left(\Gamma^\ast{\bf V}\right) \notag \\
&+\frac{1}{1+\alpha} \Lambda \mathscr{A} y \times \left( \mu \partial_{\tau \tau} \uptheta + \mu_{\tau} \partial_\tau \uptheta + 2 \mu \Gamma^{*} \partial_\tau \uptheta + \overline{C} \mu^{1-\delta-\sigma} \Lambda \eta\right) \notag \\
&+\frac{\alpha}{(1+\alpha)(1+\beta)} \Lambda \mathscr{A} \nabla \beta \times \left( \mu \partial_{\tau \tau} \uptheta + \mu_{\tau} \partial_\tau \uptheta + 2 \mu \Gamma^{*} \partial_\tau \uptheta + \overline{C} \mu^{1-\delta-\sigma} \Lambda \eta\right).
\end{align}
Integrate from $0$ to $\tau'$, where $\tau' \in [0,\tau]$,
\begin{align}\label{E:CURLEXPMULTVGAMMALEQ5OVER3}
\text{Curl}_{\Lambda\mathscr{A}}\left({\bf V}\right)  &= \frac{\mu(0) \text{Curl}_{\Lambda \mathscr{A}} \left([{\bf V}(0)]\right)}{\mu}+\frac{1}{\mu}\int_0^{\tau'} \mu [\partial_{\tau}, \text{Curl}_{\Lambda\mathscr{A}}] \left({\bf V}\right) d\tau'' - \frac{2}{\mu} \int_0^{\tau'} \mu \, \text{Curl}_{\Lambda\mathscr{A}}\left(\Gamma^\ast{\bf V}\right) d\tau'' \notag \\
&+\frac{1}{(1+\alpha) \mu} \int_0^{\tau'}  \Lambda \mathscr{A} x \times \left( \mu \partial_{\tau \tau} \uptheta + \mu_{\tau} \partial_{\tau} \uptheta + 2 \mu \Gamma^{*} \partial_{\tau} \uptheta + \overline{C} \mu^{1-\delta-\sigma} \Lambda \eta\right) d \tau'' \notag \\
&+\frac{\alpha}{(1+\alpha)(1+\beta)\mu} \int_0^{\tau'}  \Lambda \mathscr{A} \nabla \beta \times \left( \mu \partial_{\tau \tau} \uptheta + \mu_{\tau} \partial_{\tau} \uptheta + 2 \mu \Gamma^{*} \partial_{\tau} \uptheta + \overline{C} \mu^{1-\delta-\sigma} \Lambda \eta\right) d \tau''.
\end{align}
Note
\begin{equation}
\mu (\Lambda \mathscr{A} x \times \partial_{\tau \tau} \uptheta) = \partial_{\tau} ( \mu \Lambda \mathscr{A} x \times \partial_{\tau} \uptheta ) - \mu_{\tau} (\Lambda \mathscr{A} x \times \partial_{\tau} \uptheta) - \mu [\partial_{\tau},\Lambda \mathscr{A} x \times]\partial_{\tau} \uptheta,
\end{equation}
where
\begin{equation} 
[\partial_{\tau},\Lambda \mathscr{A} x \times ] \mathbf{F}^i_j := \partial_{\tau} (\Lambda_{jm}\mathscr{A}^s_m)x_s \mathbf{F}^i - \partial_{\tau}(\Lambda_{im} \mathscr{A}^s_m)y_s\mathbf{F}^j.
\end{equation}
Using a similar result for $\mu (\Lambda \mathscr{A} \nabla \beta \times \partial_{\tau \tau} \uptheta)$, we have
\begin{align}\label{E:CURLVPOSTCURLEXPAND2GAMMALEQ5OVER3}
&\text{Curl}_{\Lambda\mathscr{A}}{\bf V}  = \frac{1}{1+\alpha}\Lambda\mathscr{A} y \times \mathbf{V} + \frac{\alpha}{(1+\alpha)(1+\beta)} \Lambda\mathscr{A} \nabla \beta \times \mathbf{V} \notag \\
&+  \frac{\mu(0) \text{Curl}_{\Lambda \mathscr{A}} ({\bf V}(0))}{\mu} - \frac{\mu(0) \Lambda\mathscr{A} x \times \mathbf{V}(0)}{(1+\alpha) \mu} - \frac{\alpha \mu(0) \Lambda\mathscr{A} \nabla \beta \times \mathbf{V}(0)}{(1+\alpha)(1+\beta) \mu} \notag \\
& + \frac{1}{\mu}\int_0^{\tau'} \mu [\partial_{\tau}, \text{Curl}_{\Lambda\mathscr{A}}] {\bf V} d\tau'' - \frac{1}{(1+\alpha) \mu} \int_0^{\tau'} \mu [\partial_{\tau},\Lambda \mathscr{A} x \times ] \mathbf{V} d \tau'' \notag \\
&- \frac{\alpha}{(1+\alpha)(1+\beta) \mu} \int_0^{\tau'} \mu [\partial_{\tau},\Lambda \mathscr{A} \nabla \beta \times ] \mathbf{V} d \tau'' \notag \\
& - \frac{2}{\mu} \int_0^{\tau'} \mu \, \text{Curl}_{\Lambda\mathscr{A}}(\Gamma^\ast{\bf V}) d\tau'' + \frac{2}{(1+\alpha) \mu} \int_0^{\tau'} \mu \, \Lambda\mathscr{A} x \times (\Gamma^\ast \mathbf{V}) d \tau'' \notag \\
& + \frac{2\alpha}{(1+\alpha)(1+\beta) \mu} \int_0^{\tau'} \mu \, \Lambda\mathscr{A} \nabla \beta \times (\Gamma^\ast \mathbf{V}) d \tau'' \notag \\
&+\frac{\overline{C}}{(1+\alpha) \mu}\int_0^{\tau'} \mu^{1-\delta-\sigma} \Lambda\mathscr{A} y \times (\Lambda \eta) d \tau''+\frac{\alpha \overline{C}}{(1+\alpha)(1+\beta) \mu}\int_0^{\tau'} \mu^{1-\delta-\sigma} \Lambda\mathscr{A} \nabla \beta \times (\Lambda \eta) d \tau''.
\end{align}
Now
\begin{align}\label{E:CURLLASTTERMIDENTITYGAMMALEQ5OVER3}
&[\Lambda\mathscr{A} x \times (\Lambda \eta)]_j^i = \Lambda_{jm}(\delta_m^s - \mathscr{A}_\ell^s [D \uptheta]_m^\ell) x_s \Lambda_{ik} \eta^k - \Lambda_{im}(\delta_m^s - \mathscr{A}_\ell^s [D \uptheta]_m^\ell ) y_s \Lambda_{jk} \eta^k \notag \\
&= \Lambda_{js} y_s \Lambda_{ik}\uptheta^k - \Lambda_{is} y_s \Lambda_{jk} \uptheta^k+\Lambda_{js} y_s\Lambda_{ik}y^k - \Lambda_{is} y_s \Lambda_{jk} y^k \notag \\
&\qquad \qquad \qquad -(\Lambda_{jm}\mathscr{A}_\ell^s [D \uptheta]_m^\ell y_s \Lambda_{ik} \eta^k - \Lambda_{im} \mathscr{A}_\ell^s [D \uptheta]_m^\ell y_s \Lambda_{jk} \eta^k) \notag \\
&=(\Lambda_{js} y_s \Lambda_{ik}\uptheta^k - \Lambda_{is} y_s \Lambda_{jk} \uptheta^k) -(\Lambda_{jm}\mathscr{A}_\ell^s [D \uptheta]_m^\ell x_s \Lambda_{ik} \eta^k - \Lambda_{im} \mathscr{A}_\ell^s [D \uptheta]_m^\ell y_s \Lambda_{jk} \eta^k) \notag \\
&:=[\Lambda y \times  \Lambda \uptheta]_j^i- [\Lambda \mathscr{A}[D \uptheta]y \times \Lambda \eta]_j^i.
\end{align}
Second notice that
\begin{align}\label{E:CURLLASTTERMIDENTITYGAMMALEQ5OVER3}
[\Lambda\mathscr{A} \nabla \beta \times (\Lambda \eta)]_j^i &= \Lambda_{jm}\mathscr{A}^s_m \beta_{,s} \Lambda_{ik} \uptheta^k - \Lambda_{im}\mathscr{A}^s_m \beta_{,s} \Lambda_{jk} \uptheta^k +\Lambda_{jm}\mathscr{A}^s_m \beta_{,s} \Lambda_{ik} y^k - \Lambda_{im}\mathscr{A}^s_m \beta_{,s} \Lambda_{jk} y^k \notag \\
&=[\Lambda \mathscr{A} \nabla \beta \times \Lambda \uptheta]_j^i + [\Lambda \mathscr{A} \nabla \beta \times \Lambda y]_j^i.
\end{align}
Hence we have (\ref{E:CURLVFINAL}).
Now for $\text{Curl}_{\Lambda \mathscr{A}} \uptheta$, first note
\begin{equation}\label{E:CURLTHETADERIVATIVE}
\partial_{\tau} (\text{Curl}_{\Lambda \mathscr{A}} \uptheta) = \text{Curl}_{\Lambda \mathscr{A}} (\partial_{\tau} \uptheta) + [\partial_{\tau},\text{Curl}_{\Lambda \mathscr{A}}] \uptheta.
\end{equation}
So integrating (\ref{E:CURLTHETADERIVATIVE}) from $0$ to $\tau$ via (\ref{E:CURLVPOSTCURLEXPAND2GAMMALEQ5OVER3}) we have (\ref{E:CURLTHETAFINAL}).
\end{proof}

\section{Coercivity Estimates}\label{A:COERCIVITY}
We give a useful result which will allow us to overcome the time weights with negative powers which arise from our equation structure.

\begin{lemma}[Coercivity Estimates]\label{L:COERCIVITY}
Let $(\uptheta, {\bf V}):\Omega \rightarrow \mathbb R^3\times \mathbb R^3$ be a unique local solution to (\ref{E:THETAEQNLINEARENERGYFUNCTION})-(\ref{E:THETAICGAMMALEQ5OVER3}) on $[0,T^*]$ for $T^*>0$ fixed with $\text{supp} \,\uptheta_0 \subseteq B_1(\mathbf{0})$, $\text{supp}\,\mathbf{V}_0 \subseteq B_1(\mathbf{0})$ and assume $(\uptheta, {\bf V})$ satisfies the a priori assumptions (\ref{E:APRIORI}). Fix $N\geq 4$.  Suppose $\beta$ in (\ref{E:THETAEQNLINEARENERGYFUNCTION}) satisfies $\| \beta \|^2_{H^{N+1}(\mathbb{R}^3)} \leq \lambda$ and $\text{supp} \, \beta \subseteq B_1(\mathbf{0})$ where  $\lambda > 0$ is fixed.  Fix $\nu$ with $0 \leq |\nu| \leq N-1$. Then for all $\tau \in [0,T^*]$, we have the following inequalities
\begin{align}
\| \partial^\nu \uptheta\|^2 &\lesssim \sup_{0 \leq \tau \leq \tau'}  \{ \mu^\sigma \| \partial^\nu \mathbf{V}\|^2 \} + \| \partial^\nu \uptheta (0)\|^2 \label{E:COERCIVITY1} \\
\| \nabla_{\eta} \partial^\nu \uptheta\|^2 &\lesssim \sup_{0 \leq \tau \leq \tau'}  \{ \sum_{|\nu'| = |\nu|+1} \mu^\sigma \| \partial^{\nu'} \mathbf{V}\|^2 \} + \| \nabla_{\eta} \partial^\nu \uptheta (0)\|^2 \label{E:COERCIVITY2} \\
\| \text{\em div}_{\eta} \partial^\nu \uptheta\|^2 &\lesssim \sup_{0 \leq \tau \leq \tau'}  \{ \sum_{|\nu'| = |\nu|+1} \mu^\sigma \| \partial^{\nu'} \mathbf{V}\|^2 \} + \| \text{\em div}_{\eta} \partial^\nu \uptheta (0)\|^2. \label{E:COERCIVITY3}
\end{align}
\end{lemma}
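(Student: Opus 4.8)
\textbf{Proof proposal for Lemma \ref{L:COERCIVITY}.}

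The plan is to exploit the elementary identity $\partial^\nu\uptheta(\tau',y) = \partial^\nu\uptheta(0,y) + \int_0^{\tau'}\partial^\nu\mathbf{V}(s,y)\,ds$, which is just the fundamental theorem of calculus applied to $\mathbf{V} = \partial_\tau\uptheta$, together with the fact that the time-decay of $\mathbf{V}$ in $L^2$ is integrable. Concretely, for \eqref{E:COERCIVITY1} I would take the $L^2(\mathbb{R}^3)$ norm, use the triangle inequality, and then bound $\|\int_0^{\tau'}\partial^\nu\mathbf{V}\,ds\| \le \int_0^{\tau'}\|\partial^\nu\mathbf{V}(s)\|\,ds = \int_0^{\tau'}\mu^{-\sigma/2}(s)\,\mu^{\sigma/2}(s)\|\partial^\nu\mathbf{V}(s)\|\,ds$. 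By \eqref{E:EXPMU1MUINEQGAMMALEQ5OVER3} we have $\mu^{-\sigma/2}(s)\lesssim e^{-\mu_0 s}$ (recalling $\mu_0 = \tfrac{\sigma}{2}\mu_1$), which is integrable on $[0,\infty)$, so Cauchy--Schwarz in $s$ (or simply pulling out the supremum of $\mu^{\sigma/2}\|\partial^\nu\mathbf{V}\|$) gives $\int_0^{\tau'}\mu^{-\sigma/2}(s)\bigl(\mu^{\sigma/2}(s)\|\partial^\nu\mathbf{V}(s)\|\bigr)\,ds \lesssim \sup_{0\le s\le\tau'}\bigl(\mu^{\sigma/2}(s)\|\partial^\nu\mathbf{V}(s)\|\bigr) = \sup_{0\le s\le\tau'}\bigl(\mu^\sigma(s)\|\partial^\nu\mathbf{V}(s)\|^2\bigr)^{1/2}$. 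Squaring yields \eqref{E:COERCIVITY1}.

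For \eqref{E:COERCIVITY2} and \eqref{E:COERCIVITY3} the same scheme applies but one must commute the $\eta$-dependent operators $\nabla_\eta$, $\text{div}_\eta$ with the time integral. Since $[\nabla_\eta\partial^\nu\uptheta]^i_r = \mathscr{A}^s_r(\partial^\nu\uptheta^i)_{,s}$, I would write $\nabla_\eta\partial^\nu\uptheta(\tau') = \nabla_\eta\partial^\nu\uptheta(0) + \int_0^{\tau'}\partial_\tau(\nabla_\eta\partial^\nu\uptheta)\,ds$ and expand $\partial_\tau(\nabla_\eta\partial^\nu\uptheta) = \nabla_\eta\partial^\nu\mathbf{V} + \partial_\tau\mathscr{A}\cdot D\partial^\nu\uptheta$ using the differentiation formula \eqref{E:AJDIFFERENTIATIONFORMULAE}. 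The first piece is handled exactly as in \eqref{E:COERCIVITY1} (it contributes $\sum_{|\nu'|=|\nu|+1}\mu^\sigma\|\partial^{\nu'}\mathbf{V}\|^2$ after using boundedness of $\mathscr{A}$ from the a priori assumptions \eqref{E:APRIORI}); the second piece is a genuine remainder: $\partial_\tau\mathscr{A} = -\mathscr{A}D\mathbf{V}\mathscr{A}$ is bounded in $L^\infty$ by the a priori assumption $\|D\mathbf{V}\|_{L^\infty}\le C$, so $\|\partial_\tau\mathscr{A}\cdot D\partial^\nu\uptheta(s)\| \lesssim \|D\partial^\nu\uptheta(s)\| \lesssim \|\nabla_\eta\partial^\nu\uptheta(s)\| + (\text{lower order})$, and one closes by a Grönwall-type argument or — more simply, since the target bound already contains $\sup_s$ — by absorbing. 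Alternatively, and more cleanly, I would not differentiate $\mathscr{A}$ at all: write $\nabla_\eta\partial^\nu\uptheta = \mathscr{A}\,D\partial^\nu\uptheta$ with $D\partial^\nu\uptheta(\tau') = D\partial^\nu\uptheta(0) + \int_0^{\tau'}D\partial^\nu\mathbf{V}\,ds$, apply the Cartesian FTC to the flat derivative $D\partial^\nu\uptheta$ (no commutator needed), bound the time integral as before, and only at the very end multiply by the bounded matrix $\mathscr{A}$ and use $\|D\partial^\nu\uptheta\|\sim\|\nabla_\eta\partial^\nu\uptheta\|$ (up to constants, by the a priori closeness of $\mathscr{A}$ to the identity) to convert between the flat and $\eta$-adapted gradients; the same works for $\text{div}_\eta$. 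The restriction $|\nu|\le N-1$ enters because the right-hand side involves $\partial^{\nu'}\mathbf{V}$ with $|\nu'| = |\nu|+1 \le N$, which is exactly the top order controlled by $\mathcal{S}^N$.

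The only mild obstacle is bookkeeping the conversion between the flat derivatives $D\partial^\nu\uptheta$ and the $\eta$-adapted operators $\nabla_\eta$, $\text{div}_\eta$, and making sure the implied constants depend only on the a priori bounds \eqref{E:APRIORI} (which guarantee $\|\mathscr{A}-\mathbf{Id}\|_{L^\infty}<\tfrac13$, hence $\mathscr{A}$ is uniformly invertible with bounded inverse); there is no analytic difficulty, since the integrability of $\mu^{-\sigma/2}\lesssim e^{-\mu_0\tau}$ from Lemma \ref{L:USEFULTAULEMMAGAMMALEQ5OVER3} does all the work. I expect the whole argument to be short once the FTC-plus-integrable-weight idea is in place.
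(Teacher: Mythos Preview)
Your proposal is correct and matches the paper's approach almost exactly: for \eqref{E:COERCIVITY1} the paper uses precisely the FTC-plus-$\mu^{-\sigma/2}\mu^{\sigma/2}$-splitting you describe, and for \eqref{E:COERCIVITY2}--\eqref{E:COERCIVITY3} it runs the same coercivity estimate on $\nabla_\eta\partial^\nu\uptheta$ and then bounds $\|\nabla_\eta\partial^\nu\mathbf{V}\|\lesssim\sum_{|\nu'|=|\nu|+1}\|\partial^{\nu'}\mathbf{V}\|$ via the a priori bound on $\mathscr{A}$. If anything you are more careful than the paper about the $\mathscr{A}$-commutator; your ``cleaner'' alternative (apply FTC to the flat $D\partial^\nu\uptheta$ first, then multiply by the bounded $\mathscr{A}$) is exactly the mechanism the paper's one-line argument is implicitly relying on.
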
 
\begin{proof}
\textit{Proof of} (\ref{E:COERCIVITY1}). By the fundamental theorem of calculus, and the exponential boundedness of $\mu$ (\ref{E:EXPMU1MUINEQGAMMALEQ5OVER3}) and therefore time integrability of negative powers of $\mu$,
\begin{align}\label{E:COERCIVITYPROOF1A}
\partial^\nu \uptheta = \int_0^\tau \partial^\nu \mathbf{V} d \tau' + \partial^\nu  \uptheta (0) &= \int_0^\tau \mu^{-\tfrac{\sigma}{2}} \mu^{\tfrac{\sigma}{2}} \partial^\nu \mathbf{V} d \tau' + \partial^\nu \uptheta (0) \notag \\
& \lesssim \sup_{0 \leq \tau \leq \tau'} \{ \mu^{\tfrac{\sigma}{2}} \partial^\nu \mathbf{V} \} +\partial^\nu \uptheta (0).
\end{align}      
Therefore applying Cauchy's inequality ($ab \lesssim a^2 + b^2,$ $a,b \in \mathbb{R}$) 
\begin{equation}\label{E:COERCIVITYPROOF1B}
\| \partial^\nu \uptheta\|^2 \lesssim \sup_{0 \leq \tau \leq \tau'} \{ \mu^\sigma \| \partial^\nu \mathbf{V}\|^2 \} + \| \partial^\nu \uptheta (0)\|^2.    
\end{equation}        
\textit{Proof of} (\ref{E:COERCIVITY2}). By a similar coercivity estimate to (\ref{E:COERCIVITYPROOF1A})-(\ref{E:COERCIVITYPROOF1B})
\begin{equation}\label{E:COERCIVITYPROOF2A}
\| \nabla_{\eta}  \partial^\nu \uptheta\|^2 \lesssim \sup_{0 \leq \tau \leq \tau'} \{ \mu^\sigma \| \nabla_{\eta}  \partial^\nu \mathbf{V}\|^2 \} + \| \nabla_{\eta}  \partial^\nu \uptheta (0)\|^2. 
\end{equation}
Now using our a priori bounds (\ref{E:APRIORI}), we have
\begin{equation}\label{E:COERCIVITYPROOF2B}
\sup_{0 \leq \tau \leq \tau'} \mu^\sigma \| \nabla_{\eta} \partial^\nu \mathbf{V}\|^2 \lesssim \sup_{0 \leq \tau \leq \tau'} \{  \sum_{|\nu'| = |\nu|+1} \mu^\sigma \| \partial^{\nu'} \mathbf{V}\|^2 \} . 
\end{equation}      
Then (\ref{E:COERCIVITYPROOF2A})-(\ref{E:COERCIVITYPROOF2B}) imply (\ref{E:COERCIVITY2}). \\ \\ 
\textit{Proof of} (\ref{E:COERCIVITY3}). Finally the proof of (\ref{E:COERCIVITY3}) is similar to the proof of (\ref{E:COERCIVITY2}).
\end{proof}  

\end{document}